
\documentclass[11pt]{amsart}

\usepackage{amscd,amsfonts,mathrsfs,amsthm,amssymb}
\usepackage{comment}
\usepackage{mathtools}
\usepackage{enumerate}
\usepackage{multicol}        			
\usepackage{color}           			
\usepackage{array}
\usepackage{ae}
\usepackage{accents}
\usepackage{epsfig}
\usepackage[e]{esvect}

\usepackage{yfonts}

\usepackage{empheq}
\usepackage[latin1]{inputenc}

\usepackage{hyperref}
\usepackage[alphabetic, msc-links, backrefs]{amsrefs}

\definecolor{r}{rgb}{.9,0.1,.3}

\definecolor{verdeosc}{rgb}{0,0.6,0}


 
\newcommand{\Ff}{\mathcal F}

 \newcommand{\Dd}{\mathcal{D}}
 \newcommand{\Ee}{\mathcal{E}}

  \newcommand{\Ss}{\mathcal S}
 
 \newcommand{\Qint}{Q^\textnormal{int}}
\newcommand{\Qb}{Q^{\partial}}
 \newcommand{\RR}{\mathbf{R}}  
 \newcommand{\Ww}{\mathcal{W}}
 \newcommand{\ZZ}{\mathbf{Z}}  
 \newcommand{\BB}{\mathbf{B}}  
 \newcommand{\CC}{\mathbf{C}}  

 \newcommand{\eps}{\epsilon}
 \newcommand{\Tan}{\operatorname{Tan}}
 \newcommand{\Tt}{\mathcal{T}}

\newcommand{\pdf}[2]{\frac{\partial #1}{\partial #2}}

\newtheorem*{theorem*}{Theorem}
\newtheorem*{lemma*}{Lemma}
\newtheorem{theorem}{Theorem}

\newtheorem{lemma}[theorem]{Lemma}

\newtheorem{corollary}[theorem]{Corollary}
\newtheorem{proposition}[theorem]{Proposition}
\newtheorem{definition}[theorem]{Definition}
\newtheorem{claim}[theorem]{Claim}
\newtheorem*{claim*}{Claim}
\theoremstyle{definition}
\newtheorem{remark}[theorem]{Remark}

\def\pproof#1{\@ifnextchar[\opargproof
{\opargproof[\it Proof of #1.]}}
\def\opargproof[#1]{\par\noindent {\bf #1 }}

\makeatother




\title{Morse-Rad\'o Theory for Minimal Surfaces}
\author[D. Hoffman]{\textsc{D. Hoffman}}

\address{David Hoffman\newline
 Department of Mathematics\newline
 Stanford University \newline
   Stanford, CA 94305, USA\newline
{\sl E-mail address:} {\bf dhoffman@stanford.edu}}

\author[F. Martin]{\textsc{F. Martín}}

\address{Francisco Martín\newline
Departamento de Geometría y Topología  \newline
Instituto de Matemáticas IMAG \newline
Universidad de Granada\newline
18071 Granada, Spain\newline
{\sl E-mail address:} {\bf fmartin@ugr.es}
}
\author[B. White]{\textsc{B. White}}

\address{Brian White\newline
Department of Mathematics \newline
 Stanford University \newline 
  Stanford, CA 94305, USA\newline
{\sl E-mail address:} {\bf bcwhite@stanford.edu}
}

\subjclass[2010]{Primary 53A10, Secondary 49Q05, 53C42}
\keywords{Minimal surfaces, critical points}
\thanks{F. Martín  was partially supported by the MICINN grant PID2020-116126-I00, by the IMAG--Maria de Maeztu grant CEX2020-001105-M / AEI / 10.13039/501100011033 and  by the Regional Government of Andalusia and ERDEF grant PY20-01391.}

\date{October 19, 2022}                                   

\begin{document}

\begin{abstract}
For a class of functions (called minimal Rad\'o functions) that arise naturally in minimal surface
theory, we bound the number of interior critical points (counting multiplicity) in
terms of the boundary data and the Euler characteristic of the domain of the function.
\end{abstract}

\maketitle

\tableofcontents

\section{Introduction}

Consider the following facts from the classical theory of minimal surfaces:
\begin{enumerate}[\upshape(1)]
\item\label{Rado-item} If $M$ is a compact minimal disk in $\RR^n$, if $F:\RR^n\to \RR$ is linear, and if 
  $F^{-1}(c)\cap M$ contains an interior critical point of $F|M$ with multiplicity $k$, then
  $F^{-1}(c)\cap \partial M$ contains at least $2 (k+1)$ points.
  (See~\cite{Rado1930}*{p.~794,(c)}).
\item\label{schneider-item}
 If $M$ is a minimal disk in $\RR^n$, if $F:\RR^n\to \RR$ is linear,
 and if $F:\partial M \to \RR$ has at most $k$ local minima, then $F|M$ has at most
 $k-1$ interior critical points, counting multiplicity.
 (See~\cite{schneider}*{Lemma~2}.)
 \end{enumerate}
 
These facts are powerful tools in minimal surface theory.
For instance, Rad\'o~\cite{rado} used~\eqref{Rado-item} to prove that if the boundary of  minimal disk in $\RR^n$ 
projects homeomorphically to the boundary of a convex region in a plane, then the
interior of the disk is a smooth graph over that region.  (Actually, Rad\'o stated the theorem only for $n=3$, but Osserman~\cite{osserman-book}*{Theorem~7.2} pointed out that Rad\'o's proof works for any $n$.)  Rad\'o also showed that if the boundary of a minimal disk in $\RR^n$ projects homeomorphically onto the boundary of a planar star-shaped region, then the interior of 
the disk has no branch points~\cite{Rado1930}*{p.~794}.
Finn and Osserman~\cite{FO} used an analog of~\eqref{Rado-item}
to prove a curvature estimate that implies Bernstein's Theorem (an entire
solution $u:\RR^2\to\RR$ of the minimal surface equation must be a plane.)
Schneider~\cite{schneider}
 used~\eqref{schneider-item} to show that for a minimal
disk in Euclidean space, the sum of the
orders of the interior branch points is bounded by 
\[
 \frac{\kappa}{2\pi} -1,
\]
where $\kappa$ is the total curvature of the boundary.
More recently,~\eqref{schneider-item} was used in the variational existence proof of genus-one helicoids
in $\RR^3$~\cite{hoffman-white}.

In this paper, we sharpen~\eqref{Rado-item} and~\eqref{schneider-item} and extend them to minimal surfaces of arbitrary genus in 
Riemannian manifolds.   
See Theorem~\ref{slice-theorem} for the generalization of~\eqref{Rado-item}.
Generalizing~\eqref{schneider-item}, we show:

\begin{theorem} \label{th-one}
Suppose that $M$ is a compact minimal surface with boundary in a Riemannian manifold $N$.
Suppose that $F:N\to\RR$ is a continuous function  such that
\begin{enumerate}[\upshape(1)]
\item\label{hypothesis-1} if $\dim N=3$, the level sets of $F$ are minimal surfaces, and 
\item\label{hypothesis-2} if $\dim N>3$, the the level sets of $F$ are totally geodesic.
\item\label{hypothesis-3} for each $t$, $\{F=t\}$ is in the closure of $\{F>t\}$ and of $\{F<t\}$.
\end{enumerate}
Suppose also that $F$ is nonconstant on each connected component of $M$,
and that the set $Q$ of local minima of $F|\partial M$ is finite.
Then the number $\mathsf{N}(F|M)$ of interior critical points of $F|M$ (counting multiplicity) 
and the number $s^\partial(F)$ of boundary saddle points of $F|M$ (counting multiplicity) satisfy
\begin{equation*}
  \mathsf{N}(F|M) + s^\partial(F) =  |Q| - \chi(M),
\end{equation*}
where 
$\chi(M)$ is the Euler characteristic of $M$ 
and where $|Q|$ is the number of elements in the set $Q$.
\end{theorem}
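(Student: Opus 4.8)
The plan is to route the argument through the class of \emph{minimal Rad\'o functions} of the abstract. First I would show that $f:=F|M$ is one: a continuous function, nonconstant on each component of $M$, that near every interior point of $M$ looks like a nonconstant harmonic function and near every boundary point looks like a harmonic function on a half-disc. Then I would prove the Euler-characteristic identity for an arbitrary minimal Rad\'o function by a Morse-theoretic count. Hypotheses~\eqref{hypothesis-1}--\eqref{hypothesis-3} are used only in the first step.

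\emph{Step 1 (the geometric input).} Fix $p\in M$, put $t=F(p)$ and let $\Sigma_t=\{F=t\}$; near $p$ this is a minimal hypersurface of $N$ when $\dim N=3$ and a totally geodesic one when $\dim N>3$. Assume $p$ is a critical point of $f$, i.e.\ $T_pM\subseteq T_p\Sigma_t$. Since $M$ is minimal, $\Delta_M(F|M)=\operatorname{tr}_{TM}\operatorname{Hess}^N F$; since every level hypersurface of $F$ is minimal (resp.\ totally geodesic), $\operatorname{Hess}^N F$ is traceless (resp.\ vanishes) on the tangent space of that hypersurface. When $\dim N=3$ we have $T_pM=T_p\Sigma_t$, and when $\dim N>3$ we have $\operatorname{Hess}^N F|_{T_p\Sigma_t}=0$; either way $\operatorname{tr}_{T_pM}\operatorname{Hess}^N F=0$ at $p$, and a neighbourhood version of this shows $h:=(F|M)-t$ satisfies a second-order elliptic equation whose right-hand side is a bounded multiple of lower-order terms in $h$ near $p$. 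By the Hartman--Wintner theorem $h$ then vanishes to finite order $k+1\ge 2$ at $p$ with a nonzero homogeneous harmonic polynomial as leading term; hence the level set $\{f=t\}=\{h=0\}$ near $p$ is $2(k+1)$ embedded arcs through $p$ across which $\operatorname{sign}(F-t)$ alternates, the model being $\operatorname{Re}(z^{k+1})$. Hypothesis~\eqref{hypothesis-3} is exactly what makes the alternation meaningful, by preventing $F-t$ from vanishing on the sectors between consecutive arcs. The boundary case $p\in\partial M$ is handled by the same analysis after a reflection across $\partial M$ (or boundary regularity plus the boundary form of Hartman--Wintner), and branch points of $M$ are absorbed by a local conformal parametrization. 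One concludes that $f$ is a minimal Rad\'o function whose interior critical points are isolated with well-defined multiplicities, that $f$ has no interior local extremum, and that the sets $Q$ and of boundary saddle points are finite. (This is the content of the slice theorem, Theorem~\ref{slice-theorem}.)

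\emph{Step 2 (the topological count).} Now let $f$ be any minimal Rad\'o function on $M$ with $Q$ finite. Perturb $f$, with support in disjoint small neighbourhoods of its critical points and of the critical set of $f|\partial M$, to a smooth $g$ that is Morse, with $g|\partial M$ Morse, with no critical point of $g$ on $\partial M$, and with no interior local extremum, arranged so that: (i) a multiplicity-$k$ interior critical point of $f$ is replaced by exactly $k$ nondegenerate saddles of $g$ --- possible since $\operatorname{Re}(z^{k+1})$ has the versal deformation $\operatorname{Re}(z^{k+1}+cz)$, which for small $c\neq 0$ is a harmonic, hence extremum-free, Morse function with $k$ saddles --- so that $\#\{\text{interior saddles of }g\}=\mathsf N(F|M)$; and (ii) the surgery near each boundary critical point of $f$ keeps $\#\{\text{local minima of }g|\partial M\}=|Q|$ and makes $s^\partial(F)$ equal to the number of \emph{boundary saddles} of $g$, i.e.\ local minima of $g|\partial M$ at which $\nabla g$ points out of $M$ together with local maxima of $g|\partial M$ at which $\nabla g$ points into $M$. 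For such a $g$, the classical Morse formula for a compact surface with boundary reads
\[
\chi(M)=\sum_{\text{interior crit.\ }q}(-1)^{\operatorname{ind}(q)}
\;+\;\sum_{\substack{q\ \text{crit.\ of }g|\partial M\\ \nabla g\ \text{points into }M}}(-1)^{\operatorname{ind}_{\partial M}(q)}.
\]
As $g$ has no interior extremum, the first sum equals $-\mathsf N(F|M)$; writing the second sum as $\#\{(\min,\mathrm{in})\}-\#\{(\max,\mathrm{in})\}$ and substituting $\#\{(\min,\mathrm{in})\}=|Q|-\#\{(\min,\mathrm{out})\}$ and $s^\partial(F)=\#\{(\min,\mathrm{out})\}+\#\{(\max,\mathrm{in})\}$, the right-hand side collapses to $|Q|-\mathsf N(F|M)-s^\partial(F)$, which is the assertion.

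\emph{Main obstacle.} The heart of the proof is Step 1: extracting the \emph{precise} local model $\operatorname{Re}(z^{k+1})$ (and its boundary analogue), not merely the qualitative fact that $M$ crosses $\Sigma_t$, and doing so uniformly across interior points, boundary points, branch points of $M$, and the two regimes $\dim N=3$ and $\dim N>3$ --- in the latter it is total geodesy, not mere minimality of the level hypersurfaces, that gives the elliptic equation enough structure for Hartman--Wintner. In Step 2 the only real subtlety is the boundary bookkeeping: since one boundary critical point of $f$ may contribute to both $Q$ and $s^\partial(F)$, one must check case by case, against the normal forms from Step 1, that the perturbation can be chosen to preserve all three of $\mathsf N(F|M)$, $|Q|$ and $s^\partial(F)$.
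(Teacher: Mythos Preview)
Your Step~1 is close to the paper's argument, with two caveats. First, $F$ is assumed only continuous, so the identity $\Delta_M(F|M)=\operatorname{tr}_{TM}\operatorname{Hess}^N F$ and the application of Hartman--Wintner to $h=(F|M)-t$ are not literally available; the paper instead works with a smooth proxy (signed distance to $\Sigma_t$ in higher codimension, or $f(z)-\phi(z^Q)$ in dimension three) whose zero set and sign agree with those of $h$. Second, the paper does \emph{not} treat boundary points by reflection or a boundary Hartman--Wintner; it first shows $F|M$ is Rad\'o on the interior and then invokes a purely topological result (Theorem~\ref{boundary-is-rado-theorem}, via the Slice Theorem) to extend the Rad\'o property to $\partial M$. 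Your Step~2 is a genuinely different route: the paper never perturbs to a Morse function but instead doubles $M$ across $\partial M$, applies a direct combinatorial count of the level-set network on the resulting closed surface (Theorem~\ref{main-theorem}), and then unwinds the doubling algebraically (Theorems~\ref{first-version-boundary-theorem} and~\ref{main-boundary-theorem}).

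Your Step~2, as written, has a real gap: claim~(ii) cannot hold simultaneously with claim~(i) at a boundary point of odd valence $v\ge 3$. Take $v=3$, so that $p$ contributes $0$ to $\mathsf N(F|M)$, $0$ to $|Q|$ (since $f|\partial M$ is monotone through $p$), and $1$ to $s^\partial(F)$. Any compactly supported perturbation $g$ that keeps $g|\partial M$ monotone near $p$ creates no local extrema of $g|\partial M$ there, hence no boundary saddle of $g$ in your sense, so the local $s^\partial$-count drops to $0$; to keep the Morse identity you are forced to create one interior saddle (indeed the model $\operatorname{Re}(z^3+cz)$ with $c>0$ does exactly this), which violates~(i). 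If instead you allow a wiggle in $g|\partial M$, local maxima and minima are created in equal numbers, so preserving $|Q|$ locally again forces no new extrema. What \emph{is} invariant under any generic small perturbation is the combination $\mathsf N+s^\partial-|Q|$, and that is all you need; but proving this invariance at each local model is precisely the Poincar\'e--Hopf/index computation you have deferred, and once you set it up you are essentially redoing the paper's doubling-and-counting argument in disguise.
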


(Theorem~\ref{th-one} is a special case of Theorem~\ref{restated-theorem}; Theorem~\ref{minimal-surface-theorem}
 and Remarks~\ref{higher-codimension-remark} and~\ref{minimal-surface-theorem-remark} show that the hypotheses
of Theorem~\ref{th-one} imply the hypotheses of Theorem~\ref{restated-theorem}.  
Theorem~\ref{th-one} is also true for branched minimal surfaces; see~\S\ref{branch-section}.) 

A continuous function whose level sets form a foliation and that satisfies hypothesis~\eqref{hypothesis-3} of Theorem~\ref{th-one} is called a foliation function.  If the leaves are minimal, it is called a minimal foliation function, and
if the leaves are totally geodesic, it is called 
  a totally geodesic foliation function.

In Theorem~\ref{th-one}, ``interior critical point of $F|M$'' means ``interior point $p$ of tangency of $M$ and the level set $\{F=F(p)\}$'',
and the multiplicity of such a critical point is the order of contact of $M$ and $\{F=F(p)\}$.
Boundary saddle points and their multiplicities are defined in Definition~\ref{boundary-saddle-definition}.

It would be natural in Theorem~\ref{th-one} to assume that $F$ is $C^1$ (or even smooth) with nowhere vanishing gradient.
However, that assumption would be undesirable for the following reason.
Consider a minimal foliation $\Ff$ of a Riemannian $3$-manifold.  Of course the leaves
are smooth.   At least locally, the foliation can be given as the level sets of a continuous function $F$.
However, for some minimal foliations, there is no such function that is $C^1$ with nowhere vanishing gradient.
(A simple example from~\cite{solomon}*{\S1} is the minimal foliation of $\{(x,y,z): x>0\}$
 consisting of the halfplanes $z= sx$ with $s\ge 0$
and the halfplanes $z=s$ with $s<0$.  If $F$ is a $C^1$ function whose level sets are the leaves, then $DF(x, y ,0) = 0$.)

For that reason, throughout the paper we work with functions that are only assumed to be continuous.

Theorem~\ref{th-one} provides an exact formula for $\mathsf{N}(F|M)$.
In many situations, a good upper bound for $\mathsf{N}(F|M)$ suffices.
Simply dropping the term $s^\partial(F)$ in Theorem~\ref{th-one} gives the bound 
\[
   \mathsf{N}(F|M) \le |Q| - \chi(M),
\]
which is often adequate.  Indeed, that gives Schneider's bound~\eqref{schneider-item}.
But one can get a better upper bound as follows.
Let $A$ be the set of local maxima and local minima of $F|\partial M$ that are not
local maxima or local minima of $F|M$.  Then $s^\partial(F) \ge |A|$ (where $|A|$ is the number of elements of $A$),
so from Theorem~\ref{th-one}, we deduce

\begin{corollary}\label{th-one-corollary}
Under the hypotheses of Theorem~\ref{th-one}, 
\begin{equation*}
   \mathsf{N}(F|M)  \le |Q| - \chi(M) - |A|.
\end{equation*}
\end{corollary}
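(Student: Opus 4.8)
The plan is to deduce the corollary from the exact identity in Theorem~\ref{th-one} together with a lower bound on the boundary-saddle count. Since Theorem~\ref{th-one} gives
\[
  \mathsf{N}(F|M) + s^\partial(F) = |Q| - \chi(M),
\]
it suffices to show $s^\partial(F) \ge |A|$; the asserted inequality then follows by subtraction, since $\mathsf{N}(F|M) = |Q| - \chi(M) - s^\partial(F) \le |Q| - \chi(M) - |A|$.

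First I would show that every point $p \in A$ is a boundary saddle point of $F|M$ of multiplicity at least one. Consider the case in which $p$ is a local minimum of $F|\partial M$ but not a local minimum of $F|M$; the case of a local maximum is handled by replacing $F$ with $-F$, which preserves the hypotheses of Theorem~\ref{th-one}. Near $p$ the values of $F$ along $\partial M$ stay $\ge F(p)$, while $M$ attains values $< F(p)$ arbitrarily close to $p$; hence the level set $\{F = F(p)\}$ is forced to enter the interior of $M$ through $p$, which is precisely the configuration that Definition~\ref{boundary-saddle-definition} records as a boundary saddle point of multiplicity at least one. Since the points of $A$ are distinct and $s^\partial(F)$ counts boundary saddle points with multiplicity, summing these contributions gives $s^\partial(F) \ge |A|$, which completes the proof.

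The routine-but-essential point to verify is that the informal local picture above matches the precise content of Definition~\ref{boundary-saddle-definition}, including the assignment of multiplicity. Because $F$ is only assumed continuous, this is best phrased in terms of the sign of $F - F(p)$ on small boundary and interior neighborhoods of $p$ rather than in terms of derivatives. I expect this verification, rather than the arithmetic, to be the only real work; everything else is immediate from Theorem~\ref{th-one}.
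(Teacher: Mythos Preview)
Your proposal is correct and follows exactly the paper's approach: the paper also derives the corollary from Theorem~\ref{th-one} via the inequality $s^\partial(F)\ge |A|$, proved by observing that each $p\in A$ is a boundary saddle of multiplicity $\ge 1$. The ``routine-but-essential'' verification you flag is carried out cleanly in the paper using Lemma~\ref{parity-lemma}: a boundary local extremum of $F|\partial M$ has even valence, and if it is not a local extremum of $F|M$ its valence is nonzero, hence $\ge 2$, giving multiplicity $v/2\ge 1$ (see Proposition~\ref{inequality-proposition}).
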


See Theorem~\ref{restated-inequality-theorem}, which also specifies when equality holds in Corollary~\ref{th-one-corollary}.

\begin{remark}
 In practice, one sometimes encounters $F$ and $M$ that satisfy all but one of the hypotheses of Theorem~\ref{th-one}, namely the hypothesis that the set of local minima of $F|\partial M$ is finite. 
 In particular, that hypothesis will fail if $F$ is constant on one or more arcs of $\partial M$. One can handle such examples as follows. Suppose $F$ is not constant on any connected component of $\partial M$.
  Let $\tilde M$ be obtained from $M$ by identifying each arc of $\partial M$ on which $F$ is constant to a point. 
  Let $\tilde F$ be the function on $\tilde M$ corresponding to $F$  on $M$. If $\tilde F|\partial \tilde M$ has a finite 
   set $\tilde Q$ of local minima, then
\begin{align*}
\mathsf{N}(F|M)
&=
\mathsf{N}(\tilde F| \tilde M)  \\
&=  |\tilde Q| -\chi(M) - s^\partial (\tilde F) \\
&\le  |\tilde Q| - \chi(M) -  |\tilde A|,
\end{align*}
where $\tilde A$ is the set of local minima and local maxima of $\tilde F| \partial \tilde M$ that are not local minima or local maxima     
      of $\tilde F|M$. 
  These facts follow from Theorems~\ref{restated-theorem}, \ref{restated-inequality-theorem}, 
and~\ref{interval-theorem}.
\end{remark}

Special cases of Theorem~\ref{th-one} have been important tools for analyzing properly embedded
translators for mean curvature flow in $\RR^3$, in particular
for the classification of translating graphs in \cites{graphs, himw-correction},
the classification of semigraphical translators (such as the doubly-periodic
Scherk-type translators and the Nguyen singly-periodic translators) \cites{HMW1, HMW2},
the classification of low entropy translators~\cite{?},
and for the construction of families of non-rotationally invariant 
translating annuli (analogs of catenoids) \cite{HMW3}.

There is also a version of Theorem~\ref{th-one} for noncompact $M$:

\begin{theorem}\label{noncompact-intro-theorem}
Let $-\infty\le a < b \le \infty$.
In Theorem~\ref{th-one}, suppose the hypothesis that $M$ is compact is replaced by the 
hypotheses that $F:M\to (a,b)$ is proper,
 that $d_1(M):=\dim H_1(M;\ZZ_2)$ is finite, and that
the limit
\[
  \beta:=\lim_{t\to a, \, t>a} | (\partial M)\cap F^{-1}(t)| 
\]
exists and is finite.   
Then
\begin{equation*}
  \mathsf{N}(F|M) + s^\partial(F) = \frac12 \beta+ |Q| - \chi(M),
\end{equation*}
and therefore
\begin{equation*}
  \mathsf{N}(F|M) \le \frac12 \beta+ |Q| - \chi(M) - |A|.
\end{equation*}

\end{theorem}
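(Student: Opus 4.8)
The plan is to reduce the noncompact statement to the already-established compact case (Theorem~\ref{th-one}, i.e.\ Theorem~\ref{restated-theorem}) by an exhaustion argument. First I would choose a regular value $a < a' < b$ of $F|M$ that is not the value of any interior or boundary critical point and such that $|(\partial M)\cap F^{-1}(a')| = \beta$; this is possible because $F$ is proper, the critical values are discrete, and the boundary fiber count stabilizes to $\beta$ as $t\to a^+$. Likewise, if $b<\infty$ choose a regular value $b'$ close to $b$ (if $b=\infty$, use properness to pick $b'$ large so that all interior critical points and all boundary local minima lie in $F^{-1}((a,b'))$, which is possible since $Q$ is finite and $\mathsf{N}(F|M)$ will turn out to be finite — or one argues this a priori from the compact estimate applied to truncations). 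Set $M' := F^{-1}([a',b'])\cap M$. Because $a',b'$ are regular values, $M'$ is a compact surface with boundary, and $\partial M' = (\partial M\cap M') \,\cup\, (F^{-1}(a')\cap M)\,\cup\,(F^{-1}(b')\cap M)$.

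The second step is a bookkeeping computation comparing the quantities for $M'$ with those for $M$. Since we arranged that $(a',b')$ contains all interior critical points of $F|M$, we have $\mathsf{N}(F|M') = \mathsf{N}(F|M)$, and similarly the boundary saddle points of $F|M'$ coming from the ``old'' boundary $\partial M\cap M'$ agree with those of $F|M$, so that term contributes $s^\partial(F)$; one must check that the new boundary arcs $F^{-1}(a')\cap M$ and $F^{-1}(b')\cap M$, being level sets of $F$, contribute no boundary saddle points and no local minima of $F|\partial M'$ on the bottom ($a'$) and are all local maxima on the top ($b'$) — hence they contribute $0$ to $|Q'|$, giving $|Q'| = |Q|$. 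Applying Theorem~\ref{th-one} to $M'$ yields
\begin{equation*}
  \mathsf{N}(F|M) + s^\partial(F) = |Q| - \chi(M').
\end{equation*}
It remains to compute $\chi(M')$. Capping off each of the $\beta$ circles of $F^{-1}(a')\cap M$ and each circle of $F^{-1}(b')\cap M$ with a disk changes the Euler characteristic by $+1$ per circle, and the resulting capped surface is homotopy equivalent to $M$ (the proper map $F$ exhibits $M$ as $M'$ with half-open collars $F^{-1}((a,a'])$ and $F^{-1}([b',b))$ attached, each collar deformation-retracting onto its boundary fiber). Hence $\chi(M) = \chi(M') + \beta + (\#\text{top circles})$. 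Substituting gives the desired formula, and the inequality then follows exactly as in the compact case via $s^\partial(F)\ge |A|$.

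The main obstacle is the justification that the relevant quantities are actually finite and stabilize, so that the regular values $a',b'$ can be chosen as claimed — concretely, that $\mathsf{N}(F|M)$ is finite. The clean way to handle this is to run the truncation argument first with \emph{arbitrary} regular values $a' ,b'$ (only using $d_1(M)<\infty$ and the existence of the limit $\beta$), obtaining from the compact case a uniform bound $\mathsf{N}(F|M') \le |Q| + \beta + O(1) + |\chi(M')|$ with $|\chi(M')|$ controlled by $d_1(M)$; since the finitely many interior critical points are then forced into a compact subset, one can afterward enlarge $a',b'$ to capture all of them and to make the boundary fiber count equal to $\beta$, and rerun the computation to get the exact equality. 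A secondary technical point is the homotopy/Euler-characteristic claim for the collars; this is where properness of $F:M\to(a,b)$ is essential, and it should be stated carefully (e.g.\ via the fact that a proper function on a manifold-with-boundary has no critical values outside a compact set once we are below the lowest interior critical value, so the sublevel pieces are genuine product collars up to homeomorphism).
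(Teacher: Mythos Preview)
Your overall strategy---truncate to a compact slab $M' = F^{-1}([a',b'])$, apply the compact theorem, then compare Euler characteristics---is the right shape, and it is essentially the route the paper takes. But there is a genuine gap in the step where you apply Theorem~\ref{th-one} directly to $M'$.

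The problem is that $F$ is \emph{constant} on the new boundary pieces $M\cap F^{-1}(a')$ and $M\cap F^{-1}(b')$. In particular, every point of $M\cap F^{-1}(a')$ is a global minimum of $F|M'$ and hence a local minimum of $F|\partial M'$. So your claim that these arcs ``contribute no local minima of $F|\partial M'$'' is false; in fact $|Q'|$ is infinite and the hypothesis of Theorem~\ref{th-one} (that the set of local minima of $F|\partial M$ be finite) fails outright. Equivalently, $F$ is not a Rad\'o function on $M'$ in the sense of Definition~\ref{rado-definition}, because at a boundary point $p\in F^{-1}(a')$ the level curve through $p$ lies in $\partial M'$, not in the interior. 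The paper's fix (Theorem~\ref{regular-interval-theorem}) is to collapse each connected component of $M[a']$ and of $M[b']$ to a single point, obtaining a compact surface $\tilde M$ on which the induced $\tilde F$ \emph{is} Rad\'o with finitely many extrema; the compact theorem is then applied to $\tilde M$, and the bookkeeping tracks how many components were closed curves (each raising $\chi$ by $1$ and contributing an interior extremum) versus arcs (contributing a boundary minimum but not changing $\chi$).

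A related error is in your Euler-characteristic computation: $F^{-1}(a')\cap M$ does not consist of ``$\beta$ circles''. The number $\beta=|(\partial M)\cap F^{-1}(a')|$ counts boundary \emph{points}; the level set $M[a']$ has $\tfrac12\beta$ arc components (each with two endpoints on $\partial M$) together with some number of closed curves. This is exactly why the correct formula involves $\tfrac12\beta(a+)$ (see Corollary~\ref{general-interval-corollary}), and why the collapsed-component bookkeeping in the paper separates arc components from closed-curve components. Your capping argument and the resulting identity $\chi(M)=\chi(M')+\beta+(\#\text{top circles})$ do not survive once this is corrected.
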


Theorem~\ref{noncompact-intro-theorem} is a special case of Corollary~\ref{general-interval-corollary},
by virtue of Theorem~\ref{minimal-surface-theorem}, 
Remarks~\ref{higher-codimension-remark} and~\ref{minimal-surface-theorem-remark}),
and (for the inequality involving $|A|$) Proposition~\ref{inequality-proposition}.

Another useful fact about $\mathsf{N}(F|M)$ is that it depends lower semicontinuously on $F$ and on $M$ (even without assuming 
properness); see Theorem~\ref{lower-semicontinuity-corollary}.

The paper is organized as follows.  We define a class of functions on surfaces that
we call Rad\'o functions.  Roughly speaking, they are continuous functions whose level sets are locally 
either isolated points or (qualitatively) like the level sets of harmonic functions. (The isolated points occur
at strict local minima and at strict local maxima.)
We show that if $M$ is a minimal surface
in a smooth Riemannian manifold $N$ and if $F:N\rightarrow\RR$ is a continuous function satisfying 
  hypotheses~\eqref{hypothesis-1}, \eqref{hypothesis-2}, and~\eqref{hypothesis-3}
 of Theorem~\ref{th-one}, then $F$ is a Rad\'o function on the interior of $M$. 
Under mild hypotheses, it follows that $F$ is a Rad\'o function on all of $M$; see Theorem~\ref{boundary-is-rado-theorem}.
We then prove the various theorems bounding numbers of critical points for arbitrary Rad\'o functions.

\section{Rad\'o Functions}

\begin{definition}\label{rado-definition}
A continuous, real-valued function on a $2$-manifold $M$ is called a {\bf Rad\'o function}
provided each point $p\in M$ has a neighborhood $U$ such that
\begin{enumerate}[\upshape(1)]
\item $U\cap \{F=F(p)\}$ consists of a finite collection $C_1,\dots, C_v$ of embedded arcs.
\item Each $C_i$ joins the point $p$ to a point in $\partial U$.
\item $C_i\cap C_j=\{p\}$ for $i\ne j$.
\item\label{crossing-item} Each $C_i$ is in the closure of $\{F>F(p)\}$ and in the closure of $\{F<F(p)\}$.
\item If $p\in \partial M$, we also require that each $C_i\setminus\{p\}$ is contained in the interior of $M$.
\end{enumerate}
The number $v=v(F,p)$ is called the {\bf valence} of $p$.
\end{definition}

We call these functions Rad\'o functions because Rad\'o
 observed~\cite{rado}*{III.6} that some important properties of harmonic functions on surfaces are shared by functions similar to those in Definition~\ref{rado-definition} (provided there are no points of valence $0$.)
 
 Note that for a Rad\'o function $F:M\to\RR$,
 \begin{enumerate}
 \item The points of valence $0$ are the local minima and local maxima of $F$.
 \item Each local maximum (local minimum) of a Rad\'o function is a strict local maximum (local minimum).
 \item For each $t$, the set $(\partial M)\cap F^{-1}(t)$ is discrete.  Indeed, if $p$ and $U$ are as in 
    Definition~\ref{rado-definition} and if $p\in\partial M$, then $(\partial M)\cap U \cap \{F=F(p)\}$ consists only of the point $p$.
\end{enumerate}

The following lemma is an immediate consequence of Definition~\ref{rado-definition}; see Figure~\ref{fig:1}.

\begin{lemma}\label{parity-lemma}
Suppose that $F:M\to\RR$ is a Rad\'o function.
If $p$ is an interior point, then $v(F,p)$ is even.
If $p$ is a boundary point, then $v(F,p)$ is even if and only if $F|\partial M$ has a local maximum or a local minimum at $p$.
\end{lemma}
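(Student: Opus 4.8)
The plan is to analyze the local picture near $p$ given by the defining neighborhood $U$ of Definition~\ref{rado-definition}, distinguishing the interior and boundary cases. In the interior case, $U$ is homeomorphic to an open disk, and the arcs $C_1,\dots,C_v$ are embedded arcs from $p$ to $\partial U$, meeting only at $p$. Removing these arcs from $U$ leaves a finite collection of open sectors; by condition~\eqref{crossing-item}, along each $C_i$ there are points of $\{F>F(p)\}$ on one side and points of $\{F<F(p)\}$ on the other, and within a single sector $F-F(p)$ has constant sign (since the sector is connected and misses $\{F=F(p)\}$). Thus the signs of consecutive sectors (cyclically ordered around $p$) must alternate: each arc $C_i$ separates a $+$ sector from a $-$ sector. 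A cyclic sequence of signs that strictly alternates must have even length, so the number of sectors, which equals $v$, is even. (When $v=0$ there are no arcs and $p$ is a strict local max or min, consistent with the statement.)

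For the boundary case, I would use condition~(5): each $C_i\setminus\{p\}$ lies in the interior of $M$, so near $p$ the picture is a half-disk $U$ with $p$ on the straight edge $\partial M\cap U$, and the arcs $C_1,\dots,C_v$ emanate from $p$ into the interior, cutting the half-disk into $v+1$ sectors arranged linearly (not cyclically) between the two boundary rays of $\partial M\cap U$. As before, $F-F(p)$ has constant sign on each sector, and crossing any $C_i$ flips the sign, so the signs of the $v+1$ sectors alternate along the linear order. The two extreme sectors are the ones adjacent to the two arcs of $(\partial M\cap U)\setminus\{p\}$. If $v$ is even, there are $v+1$ sectors, an odd number, so the two extreme sectors have the \emph{same} sign; that common sign being $+$ means $F<F(p)$ nowhere near $p$ on $\partial M$, i.e.\ $p$ is a local minimum of $F|\partial M$, and being $-$ means $p$ is a local maximum. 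Conversely, if $v$ is odd, there are an even number of sectors, the two extreme ones have opposite signs, and $F|\partial M$ takes values both above and below $F(p)$ arbitrarily close to $p$ on each side accordingly, so $p$ is neither a local max nor a local min of $F|\partial M$. This establishes the claimed equivalence.

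I expect the only real subtlety to be making rigorous the two assertions (i) that $F-F(p)$ is single-signed on each complementary sector and (ii) that the sign genuinely flips across each $C_i$; the first follows because a sector is connected, open, and disjoint from $\{F=F(p)\}$, and the second is exactly condition~\eqref{crossing-item} combined with the fact that a small transverse segment through an interior point of $C_i$ meets only the two sectors adjacent to $C_i$ there. Everything else is the elementary combinatorics of alternating sign sequences on a cycle (interior case) versus an interval (boundary case). A figure showing the fan of arcs and the alternating $\pm$ sectors in both cases (as referenced by Figure~\ref{fig:1}) makes the argument transparent.
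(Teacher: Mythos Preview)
Your argument is correct and is essentially the approach the paper has in mind: the paper declares the lemma an immediate consequence of Definition~\ref{rado-definition} (illustrated by Figure~\ref{fig:1}), and the alternating-sign analysis you carry out on the sectors cut out by the arcs $C_1,\dots,C_v$ is exactly the content of that ``immediate consequence.'' Your treatment of the boundary case, reading off whether $p$ is a local extremum of $F|\partial M$ from the parity of the number $v+1$ of sectors in the half-disk, is precisely what the figure is meant to convey.
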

\begin{figure}[htbp]
\begin{center}
\includegraphics[height=.15\textheight]{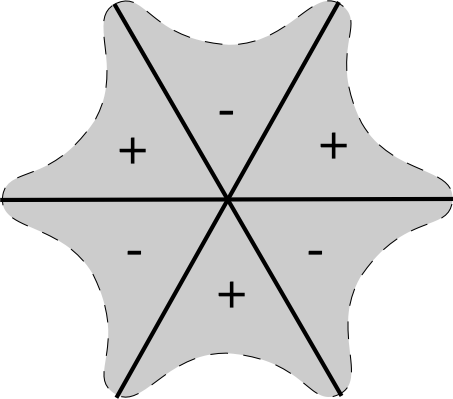} \hfill
\includegraphics[height=.25\textheight]{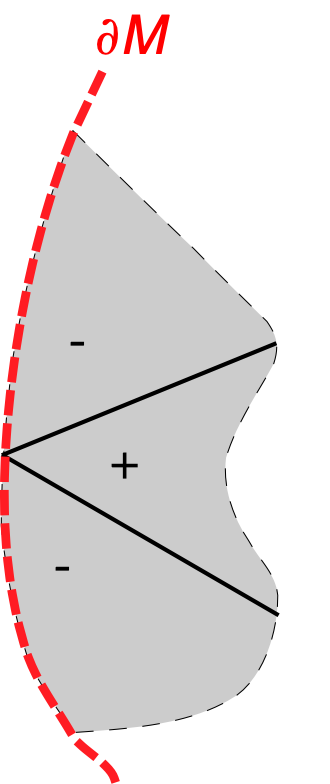} \hfill
\includegraphics[height=.25\textheight]{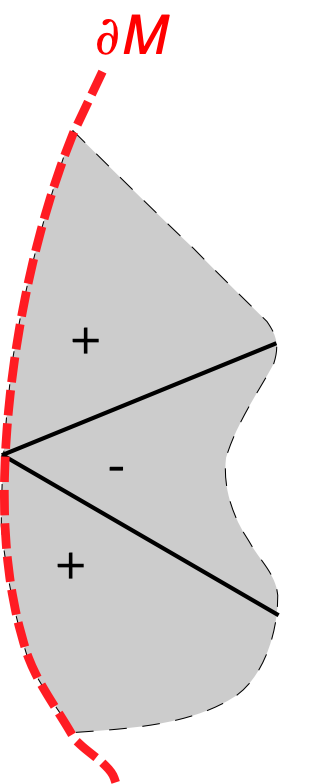}\hfill
\includegraphics[height=.25\textheight]{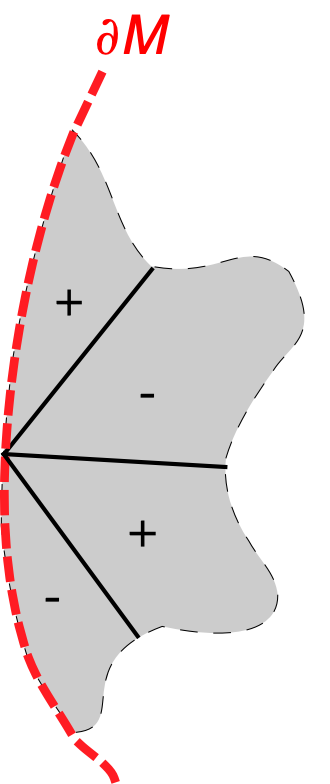}
\end{center}
\caption{From left to right: an interior critical point, a local maximum of $F|\partial M$, a local minimum of $F|\partial M$, and a point which is neither a local maximum nor a local minimum of $F|\partial M$. ``$+$'' indicates that $F>F(p)$ in that region and ``$-$'' indicates that $F<F(p)$ in that region.} \label{fig:1}
\end{figure}

\begin{definition}\label{saddle-definition}
Suppose that $F:M\to\RR$ is a Rad\'o function.
A {\bf Rad\'o critical point} (or {\bf critical point}, for short) of $F$ is an interior point $p$ such that $v(F,p)\ne 2$ or a boundary point $p$ such that $v(F,p)\ne 1$.
If $p$ is an interior point of valence $v(F,p)\ge 4$, we say that
that $p$ is a {\bf saddle of multiplicity} $w(F,p$), where
\[
   w(F,p): = \frac12v(F,p) - 1.
\]
Interior points of valence $2$ and boundary points of valence $1$ are called {\bf Rad\'o noncritical points} or {\bf Rad\'o regular points}.
\end{definition}

Two warnings about Definition~\ref{saddle-definition} are in order.
First, in case $F$ is smooth, the notion of Rad\'o critical point is not equivalent to the usual definition of critical point (i.e., a point
where $DF$ vanishes).  For example, for the Rad\'o function $F(x,y)=y^3$, every point is Rad\'o non-critical, but the points with $y=0$
are critical in the usual sense.
Second, for a general Rad\'o function, the set of Rad\'o critical points  need not be closed.  
Fortunately, under mild hypotheses, the set of Rad\'o critical points will be locally finite and therefore closed.   
See Theorem~\ref{boundary-is-rado-theorem}.  (See also Theorem~\ref{discrete-theorem}.) 

For the rest of the paper, ``critical point'', ``noncritical point", and ``regular point" will always mean ``Rad\'o critical point",
``Rad\'o noncritical point'', and ``Rad\'o regular point''.

The following theorem shows how Rad\'o functions arise naturally in minimal surface theory.

\begin{theorem}\label{minimal-surface-theorem}
Suppose $M$ is an embedded minimal surface in a smooth Riemannian $3$-manifold $N$.
Suppose $F:N\to \RR$ is a continuous function such that 
\begin{enumerate}[\upshape (1)]
\item The level sets of $F$ are smooth minimal surfaces.
\item Each level set $M[t]:=F^{-1}(t)$ is in the closure of $\{F>t\}$ and of $\{F<t\}$.\label{crossing-condition}
\end{enumerate}
Suppose also that $F$ is not constant on any connected component of $M$.
Then the restriction of $F$ to the interior of $M$ is a Rad\'o function without any interior local maxima or interior local minima.
The interior saddles of multiplicity $n$ are the points where $M$ makes contact of order $n$ with 
the level set $\{F=F(p)\}$.
\end{theorem}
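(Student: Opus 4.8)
The plan is to verify, at each interior point $p\in M$, the five conditions in Definition~\ref{rado-definition} for $F|\operatorname{interior}(M)$, and then to identify the valence with the order of contact. Fix an interior point $p$ and set $t=F(p)$, so $p$ lies on the minimal surface $M[t]=F^{-1}(t)$. The key geometric input is that two minimal surfaces in a $3$-manifold behave, near a point of intersection, like two harmonic-function graphs: either they coincide on a neighborhood, or their intersection near $p$ is a finite union of embedded arcs through $p$, meeting only at $p$, and alternately separating the two ``sides'' $\{F>t\}$ and $\{F<t\}$. First I would rule out the case where $M$ and $M[t]$ coincide near $p$: if they did, then (using hypothesis~\eqref{crossing-condition} and connectedness) one shows $F$ is locally constant on $M$, hence constant on the component of $M$ containing $p$, contradicting the hypothesis. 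So near $p$, $M\ne M[t]$.

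Next I would invoke the structure theorem for intersections of minimal surfaces — this is the classical fact (going back to the analysis underlying Rad\'o's work, and provable via the fact that the difference of the two surfaces, written as graphs in suitable coordinates, satisfies a second-order elliptic equation with a zero of finite order) that $M\cap M[t]$ near $p$ is an ``equiangular'' system of $2n$ arcs emanating from $p$, where $n\ge 1$ is the order of contact of $M$ with $M[t]$ at $p$. This gives conditions (1), (2), (3) of Definition~\ref{rado-definition} directly, with $v(F,p)=2n$. Condition~(5) is vacuous since $p$ is interior. For condition~\eqref{crossing-item} — that each arc $C_i$ is a limit of points with $F>t$ and of points with $F<t$ — I would argue by contradiction: on a small disk around $p$, the complement of $M\cap M[t]$ in $M$ has $2n$ sectors, on which $F-t$ has a fixed sign; the alternation of these signs across successive arcs follows because $M[t]$ locally separates $N$ into $\{F>t\}$ and $\{F<t\}$ (consequence of hypothesis~\eqref{crossing-condition}: a point of $M$ close to a given arc $C_i$ but off $M[t]$ lies on a definite side of $M[t]$, and the two sides realize $F>t$ and $F<t$ arbitrarily near $C_i$). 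Hence each $C_i$ is in both closures. Since $v(F,p)=2n\ge 2$, there are no interior points of valence $0$, so $F|\operatorname{interior}(M)$ has no interior local maxima or minima; and $p$ is a saddle of multiplicity $w(F,p)=\frac12 v(F,p)-1=n-1$ exactly when $n\ge 2$, i.e. when the contact order exceeds $1$, which is the asserted identification of interior saddles. Finally, continuity of $F|\operatorname{interior}(M)$ is immediate, completing the verification that it is a Rad\'o function.

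The main obstacle is the structure theorem for the intersection $M\cap M[t]$ near $p$: establishing that it is precisely a finite equiangular system of $2n$ embedded arcs through $p$, with $n$ the order of contact, and that $F-t$ alternates sign on the complementary sectors. This requires choosing coordinates (e.g. Fermi-type coordinates adapted to $M[t]$, in which $M[t]=\{x_3=0\}$ and $M$ is locally a graph $x_3=u(x_1,x_2)$), checking that $u$ satisfies a uniformly elliptic PDE of minimal-surface type whose linearization has no zeroth-order term, and then applying the Bers–Hartman–Wintner–type expansion: a solution vanishing to finite order $n$ at $p$ has leading part a harmonic homogeneous polynomial of degree $n$, so its zero set near $p$ is $C^1$-close to that of such a polynomial, namely $2n$ arcs. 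The subtlety in our setting is that $F$ is merely continuous off $M[t]$, so ``$\{F>t\}$'' and ``$\{F<t\}$'' must be handled through hypothesis~\eqref{crossing-condition} rather than through a gradient of $F$; but because $M[t]$ is a smooth minimal (hence two-sided, locally separating) hypersurface, the two local components of $N\setminus M[t]$ are well defined, and~\eqref{crossing-condition} forces one to be in $\overline{\{F>t\}}$ and the other in $\overline{\{F<t\}}$, which is all that is needed. Everything else — closure of arcs, the parity and valence bookkeeping — is then formal.
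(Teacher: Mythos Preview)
Your approach is correct and is essentially the paper's: the paper does not give a detailed argument but simply invokes ``the well-known way in which two minimal surfaces in a $3$-manifold intersect'' (citing Colding--Minicozzi, Theorem~7.3), which is exactly the Bers--Hartman--Wintner-type local expansion you sketch, together with the sign-alternation forced by hypothesis~\eqref{crossing-condition}. One minor bookkeeping point: in the paper's convention, ``contact of order $n$'' means the difference has leading harmonic term of degree $n+1$, so valence $2(n+1)$ and multiplicity $n$; your labeling is off by one from this (your ``order of contact $n$'' is the paper's ``order of contact $n-1$''), though the underlying computation is the same.
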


(Condition~\eqref{crossing-condition} rules out examples such as $F(x_1,x_2,x_3)=|x_1|$.)

Theorem~\ref{minimal-surface-theorem}
follows from the well-known way in which two minimal surfaces in a $3$-manifold intersect each other.
See, for example,~\cite{colding-minicozzi}*{Theorem~7.3} and its proof.

\begin{remark}\label{higher-codimension-remark}
Theorem~\ref{minimal-surface-theorem} is also true for branched minimal surfaces, and for minimal surfaces
in manifolds of arbitrary dimension.  (When $\dim N>3$, the hypothesis that the level sets of $F$ are minimal is replaced
by the hypothesis that the level sets are totally geodesic.)  See \S\ref{branch-section}.
\end{remark}

\begin{remark}\label{minimal-surface-theorem-remark}
Note that Theorem~\ref{minimal-surface-theorem} only asserts that $F$ is Rad\'o on the interior of $M$.
For applications, we generally need to know that $F$ is Rad\'o on all of $M$.  Fortunately, under mild hypotheses,
a continuous function that is Rad\'o on the interior of $M$ will indeed be Rad\'o on all of $M$.
In particular, the $F|M$ in Theorem~\ref{minimal-surface-theorem} is Rad\'o on all of $M$ provided
$F|M$ is proper, $(\partial M)\cap F^{-1}(t)$ is a finite
set for each~$t$, and $d_1(M):=\dim H_1(M;\ZZ_2)<\infty$.  See Theorem~\ref{boundary-is-rado-theorem}.
Although Theorem~\ref{boundary-is-rado-theorem} appears near the end of the paper, its proof does not depend
on the intervening sections.
\end{remark}

By definition, the level sets of a Rad\'o function consist of isolated points together with curves joining them.
For a general Rad\'o function, those curves are merely continuous.
But for the functions $F|M$ in Theorem~\ref{minimal-surface-theorem}, the level sets are nicer: the curves are smooth
 (because they are transverse intersections of the smooth surface
$M$ and the smooth hypersurface $F^{-1}(t)$).  Furthermore:

\begin{theorem}\label{minimal-tame-theorem}
Suppose that $F$ and $M$ are as in Theorem~\ref{minimal-surface-theorem}. 
 Then 
\begin{enumerate}
\item\label{tame1} the set of interior non-critical points of $F|M$ is an open set.
\item\label{tame2}  At each interior non-critical point $p$, the level set $M\cap \{F=F(p)\}$ has a tangent line $\Tan(F|M,p)$, and $\Tan(F|M,p)$ depends continuously on $p$.
\end{enumerate}
Furthermore, suppose $F_n$ and $M_n$ are a sequence of such examples with $F_n$ converging uniformly to $F$ and $M_n$ converging
smoothly to $M$.   If $p$ is a non-critical point of $F|M$ and if $p_n\in M_n$ converges to $p$, then $p_n$ is non-critical for $F|M_n$ for all
sufficiently large $n$, and $\Tan(F_n|M,p_n)$ converges to $\Tan(F|M,p)$.
\end{theorem}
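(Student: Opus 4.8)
The plan is to reduce all three assertions to a single continuity property of the level sets of $F$, and to derive that property from interior curvature estimates for the leaves of the minimal foliation $\{F=t\}$.

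First I record the dictionary supplied by Theorem~\ref{minimal-surface-theorem} and its proof: an interior point $p$ of $M$ is non-critical for $F|M$ if and only if $M$ meets the leaf $L_p:=\{F=F(p)\}$ transversally at $p$, i.e.\ $T_pM\neq T_pL_p$ as $2$-planes in $T_pN$ (there are no interior local extrema, and the interior saddles are exactly the points of higher-order contact). At such a $p$ the level curve $M\cap L_p$ is, near $p$, a smooth embedded arc, so it possesses the tangent line $\Tan(F|M,p)=T_pM\cap T_pL_p$. With this dictionary, everything follows from the following \emph{Key Lemma}: \emph{(i)} the tangent plane $T_qL_q$ to the leaf through $q$ depends continuously on $q$; \emph{(ii)} if $F_n\to F$ uniformly, with each $F_n$ as in Theorem~\ref{minimal-surface-theorem}, and if $c_n\to c$, then near any point of the smooth surface $\{F=c\}$ the leaves $\{F_n=c_n\}$ are, for large $n$, smooth graphs over $\{F=c\}$ converging to it in $C^1$; in particular their tangent planes converge.

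Granting the Key Lemma, the theorem is immediate. For~\eqref{tame1}: if $p$ is non-critical then $T_pM\neq T_pL_p$; since $q\mapsto T_qM$ is continuous ($M$ is smooth) and $q\mapsto T_qL_q$ is continuous by (i), the two planes stay distinct for $q\in M$ near $p$, so $M$ is transverse to $L_q$ at $q$, i.e.\ $q$ is non-critical. For~\eqref{tame2}: two distinct $2$-planes in a $3$-dimensional space meet in a line that varies continuously with the pair as long as the planes stay distinct, so $\Tan(F|M,q)=T_qM\cap T_qL_q$ varies continuously near $p$. For the last assertion, $M_n\to M$ smoothly and $p_n\to p$ give $T_{p_n}M_n\to T_pM$, while (ii) gives $T_{p_n}\{F_n=F_n(p_n)\}\to T_pL_p$; since $T_pM\neq T_pL_p$ these planes are distinct for large $n$, so $p_n$ is non-critical for $F_n|M_n$ (again by Theorem~\ref{minimal-surface-theorem}), and $\Tan(F_n|M_n,p_n)=T_{p_n}M_n\cap T_{p_n}\{F_n=F_n(p_n)\}\to T_pM\cap T_pL_p=\Tan(F|M,p)$.

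There remains the Key Lemma. Step one is Hausdorff convergence of leaves. For (i): a foliation chart identifies a neighborhood of $p$ with $D^2\times(-1,1)$, the leaves being the slices; continuity of $F$ makes $t\mapsto F(0,0,t)$ continuous, and condition~\eqref{crossing-condition} forces it to be strictly monotone (a local extremum at $t_\ast$ would put a full neighborhood of $(0,0,t_\ast)$ on one side of the value $F(0,0,t_\ast)$, contradicting~\eqref{crossing-condition}); hence the leaf through $q$ converges to $L_p$ in Hausdorff distance as $q\to p$. For (ii): near $p$ the smooth embedded surface $\{F=c\}$ separates a small ball into two components, and by~\eqref{crossing-condition} (with continuity of $F$ and connectedness of the two components) these are exactly $\{F>c\}$ and $\{F<c\}$; uniform convergence $F_n\to F$ with $F_n(p_n)=c_n\to c$ then confines $\{F_n=c_n\}$ to every neighborhood of $\{F=c\}$, i.e.\ Hausdorff convergence. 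Step two — upgrading Hausdorff convergence of smooth minimal surfaces to $C^\infty_{loc}$ convergence — is the main obstacle, and it reduces to a local bound on the second fundamental form of the leaves. The standard argument is a blow-up: were the curvature of the leaves unbounded near a point of the smooth leaf $\Sigma_0:=L_p$, a point-selection and rescaling procedure would produce a complete, embedded, non-flat minimal surface in $\RR^3$ with bounded second fundamental form; because it arises from leaves disjoint from $\Sigma_0$ and Hausdorff-close to it, this limit would lie in a closed halfspace of $\RR^3$. But a complete embedded minimal surface in $\RR^3$ with bounded curvature is properly embedded, so the halfspace theorem of Hoffman--Meeks forces it to be a plane, a contradiction. (Alternatively one may quote known curvature estimates for minimal foliations.) With the curvature bound in hand, Hausdorff-close leaves are graphs over $\Sigma_0$ with $C^0$-small height, and Schauder estimates for the minimal surface equation upgrade this to $C^\infty_{loc}$ convergence of the graphs; in particular the tangent planes converge, which yields both (i) and (ii). Everything outside the curvature bound is routine.
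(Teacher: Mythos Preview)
Your reduction to the ``Key Lemma'' (continuous dependence of the leaf tangent plane $T_qL_q$ on $q$, and its persistence under uniform limits $F_n\to F$) and the subsequent transversality argument is exactly what the paper has in mind: the paper \emph{omits} the proof entirely, saying only that it ``follows easily from standard facts about transversality,'' and illustrates by translating the final assertion into the statement that transverse intersections persist under smooth convergence.  So on the transversality side your argument and the paper's (implicit) one coincide.

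Where you go further is in actually proving the Key Lemma, which the paper does not do here; the paper only hints at it in Remark~\ref{lipshitz-remark}, observing that $T(p):=T_p\{F=F(p)\}$ is locally Lipschitz by the Harnack inequality.  Your route---blow-up plus the Hoffman--Meeks halfspace theorem, with Schauder to upgrade Hausdorff convergence to $C^1$---is correct in spirit but heavier, and has one soft spot: your appeal to the halfspace theorem needs the blow-up limit to be \emph{properly} embedded, and the implication ``complete, embedded, bounded curvature $\Rightarrow$ proper'' is not elementary.  Your parenthetical alternative (``known curvature estimates for minimal foliations,'' i.e.\ stability of leaves and Schoen's estimate, or Solomon's regularity theorem for minimal foliations) avoids this and is closer to the Harnack argument the paper suggests.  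Either way the conclusion stands; the Harnack route is the lightest and gives the Lipschitz bound of Remark~\ref{lipshitz-remark} for free.
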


We omit the proof, as it follows easily from standard facts about transversality.  For example, the last sentence of 
the statement of Theorem~\ref{minimal-tame-theorem} can be reworded as follows:
If $M$ intersects $\{F=F(p)\}$ transversely at $p$, then $M_n$ intersects $\{F_n=F_n(p_n)\}$ transversely at $p_n$ for all sufficiently large $n$, and
the tangent line to 
\[
M_n\cap \{F_n=F_n(p_n) \}
\]
 at $p_n$ converges to the tangent 
line to 
\[
M\cap \{F=F(p)\}
\] at $p$.

Theorem~\ref{minimal-tame-theorem} is also true for branched minimal surfaces.  See Corollary~\ref{branch-tame-corollary}.

Rad\'o functions with properties~\eqref{tame1} and~\eqref{tame2} in Theorem~\ref{minimal-tame-theorem} are called {\bf tame}.  
(See Definition~\ref{tame-definition}.)
Tameness implies a number of other nice properties.  See~\S\ref{tame-section}.
In particular, we prove an important lower semicontinuity property (Corollary~\ref{lower-semicontinuity-corollary}). 
In the context of Theorem~\ref{minimal-tame-theorem},
it says that the number of interior critical points (i.e., saddles) of $F|M$ (counting multiplicity)
 is less than or equal to the liminf of the number 
of interior critical points of $F_n|M_n$ (counting multiplicity).

\begin{remark}\label{lipshitz-remark}
 In Theorem~\ref{minimal-tame-theorem}, $\Tan(F|M,p)$ is not merely continuous, it is actually locally Lipschitz.
(Sketch of proof: let $T(p)$ be tangent plane to $\{F=F(p)\}$ at $p$.  It is not hard
to show using the Harnack inequality that $T(\cdot)$ is locally Lipschitz.  It follows easily that $\Tan(F|M,\cdot)$ is locally Lipschitz.)
The local Lipschitz property does not play a role in this paper.
\end{remark}

\begin{remark}\label{smooth-remark} Suppose in Theorem~\ref{minimal-tame-theorem} that $F$ is smooth with nowhere vanishing gradient.
Then the function $F|M$ is particularly nice.  First, it is smooth. Second, the interior Rad\'o critical points coincide with the usual critical points
(i.e., the points where $D(F|M)$ vanishes).  Third, the multiplicity of an interior saddle point $p$ is equal to the order of vanishing of 
  $F|M - (F|M)(p)$.  
  These facts are easy to prove, but play no role in this paper.
\end{remark}

\section{Surfaces without Boundary}

\begin{lemma}\label{euler-lemma}
Let $X$ be a finite network.  Then
\[
  \chi(X) = \sum_{p\in V} \frac12(2-v(p)),
\]
where $V$ is the set of vertices and $v(p)$ is the valence of $p$.  Equivalently,
\begin{equation}\label{basic-count-2}
   \chi(X) = \sum_n \frac12(2 - n)\, |V_n| ,
\end{equation}
where $V_n$ is the set of vertices of valence $n$.
\end{lemma}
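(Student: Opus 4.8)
The plan is to compute $\chi(X)$ directly from its definition as $|V|-|E|$, where $E$ is the edge set of the network, and then to eliminate $|E|$ using the standard edge–endpoint incidence count (the ``handshake'' identity). First I would recall that for a finite network, viewed as a finite $1$-dimensional CW complex, $\chi(X)=|V|-|E|$. Next I would count the set of flags, i.e.\ pairs consisting of an edge together with one of its two endpoints, in two different ways: summing over edges gives $2|E|$, while summing over vertices gives $\sum_{p\in V}v(p)$, since $v(p)$ is by definition the number of edge-ends incident to $p$ (a loop at $p$ being counted twice). Hence $|E|=\frac12\sum_{p\in V}v(p)$.

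Substituting this into $\chi(X)=|V|-|E|$ gives
\[
  \chi(X)=\sum_{p\in V}1-\frac12\sum_{p\in V}v(p)=\sum_{p\in V}\frac12\bigl(2-v(p)\bigr),
\]
which is the first displayed formula. For the equivalent form~\eqref{basic-count-2}, I would partition $V$ into the subsets $V_n$ of vertices of valence exactly $n$; since the summand $\frac12(2-v(p))$ depends on $p$ only through $v(p)$, grouping the terms of the sum according to valence yields $\chi(X)=\sum_n\frac12(2-n)\,|V_n|$.

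There is no substantive obstacle here; the only point that needs a moment's care is fixing the incidence conventions (loops contributing $2$ to a valence, isolated vertices having valence $0$) so that the identity $\sum_{p\in V}v(p)=2|E|$ holds exactly as stated, and noting that finiteness of $X$ is what makes all the sums finite and the subtraction $|V|-|E|$ meaningful.
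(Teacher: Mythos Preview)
Your proof is correct and essentially identical to the paper's: both use the handshake identity $\sum_{p\in V}v(p)=2|E|$ and substitute into $\chi(X)=|V|-|E|$. The only cosmetic difference is that the paper first disposes separately of components with no vertices at all (closed loops), for which both sides vanish trivially, before running the handshake argument on the remaining components.
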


Here (and throughout the paper), if $S$ is a set, then $|S|$ denotes the number of elements of $S$.

\begin{proof}
A component without vertices is a loop, and both assertions are trivially true for such components.
Thus we can assume that every component contains one or more vertices.
Let $V$ be the set of vertices and $E$ be the set of edges.
Note that
\[
  \sum_{p\in V} v(p) = 2|E|,
\]
Thus
\[
   \chi = |V| - |E| = \sum_{p\in V} 1 - \frac12\sum_{p\in V} v(p) = \sum_{p\in V} \frac12(2-v(p)).
\]
\end{proof}

\begin{lemma}\label{graph-removal-lemma}
Let $M$ be a $2$-manifold without boundary and of finite topology, and let $X$ be a finite network in $M$
Then $M\setminus X$ has finite topology, and
\[
  \chi(M) = \chi(M\setminus X) + \chi(X). 
\] 
\end{lemma}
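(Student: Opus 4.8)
The plan is to reduce to a neighborhood computation around the network $X$ and then use the additivity of Euler characteristic under inclusion-exclusion. First I would take a regular (closed) tubular neighborhood $\bar{U}$ of $X$ in $M$, chosen so that $\bar U$ deformation retracts onto $X$; since $X$ is a finite network, such a neighborhood exists and is a compact surface with boundary, and moreover $\bar U \setminus X$ also deformation retracts onto $\partial \bar U$. (Concretely: thicken each vertex to a little disk and each edge to a little band; the resulting ``ribbon surface'' retracts to $X$, and removing $X$ leaves a collar of $\partial\bar U$.) From this I get $\chi(\bar U) = \chi(X)$ and $\chi(\bar U\setminus X) = \chi(\partial \bar U) = 0$, the last because $\partial\bar U$ is a closed $1$-manifold.

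Next I would write $M = (M\setminus X) \cup \bar U$ with $(M\setminus X)\cap \bar U = \bar U \setminus X$, which is an open set inside the surface $M \setminus X$ and deformation retracts onto $\partial \bar U$. Because $M$ has finite topology and $\bar U$ is a compact surface with boundary, $M\setminus X$ has finite topology as well: it is homotopy equivalent to $M$ with finitely many disks (the components of $\interior \bar U$, or rather their retracts) handled combinatorially, and in any case it is an open subsurface of a finite-topology surface with compact ``end data'' coming from $\partial\bar U$, hence of finite topology. Then the Mayer--Vietoris sequence for the pair $\{M\setminus X,\ \bar U\}$ (or equivalently additivity of $\chi$ for a CW decomposition compatible with this splitting) gives
\[
  \chi(M) = \chi(M\setminus X) + \chi(\bar U) - \chi(\bar U\setminus X)
          = \chi(M\setminus X) + \chi(X) - 0,
\]
which is the claimed identity.

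The one point requiring care — and the main obstacle — is the finite-topology bookkeeping: one must be sure that $M\setminus X$ genuinely has finite topology and that $\chi$ is well-defined and additive in this setting, since $M$ and $M\setminus X$ are noncompact in general (only ``finite topology'' is assumed, not compactness). I would handle this by working with a compact core: let $M_0\subset M$ be a compact subsurface-with-boundary containing $X$ in its interior and carrying all the topology of $M$ (this exists by the finite-topology hypothesis), apply the tubular-neighborhood argument inside $M_0$ to get $\chi(M_0) = \chi(M_0\setminus X) + \chi(X)$, and then observe that $M\setminus X$ deformation retracts onto $M_0\setminus X$ while $M$ deformation retracts onto $M_0$, so the identity passes from $M_0$ to $M$. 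The purely local retraction statements (ribbon neighborhood of a finite network, collar of the boundary) are standard and I would not belabor them.
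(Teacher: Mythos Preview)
Your argument is correct, but the paper's own proof is much shorter and more elementary: it simply observes that removing a single point from a surface decreases $\chi$ by $1$, while removing a properly embedded open arc increases $\chi$ by $1$.  One removes the vertices of $X$ first (turning them into punctures), after which each open edge becomes a properly embedded arc in the punctured surface; removing the $|V|$ vertices and then the $|E|$ edges changes $\chi(M)$ by $-|V|+|E|=-\chi(X)$, and each intermediate surface clearly still has finite topology.  Your tubular-neighborhood/Mayer--Vietoris route is perfectly valid and has the virtue of being a clean ``one-shot'' computation that would generalize to higher-dimensional CW subcomplexes, but for a finite graph in a surface it imports more machinery (ribbon neighborhoods, compact cores, collar retractions) than the statement really needs.
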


\begin{proof}
First remove all the vertices of $X$ from $M$ to get an open $2$-manifold $M'$ with $\chi(M')=\chi(M)-|V|$,
where $|V|$ is the set of vertices of $X$.  Note that removing a properly embedded open arc from a $2$-manifold
of finite topology increases the Euler characteristic by $1$.  Thus, removing the components of $X\setminus V$
one at a time from $M'$ produces a $2$-manifold $M''$ with 
\[
\chi(M'')=\chi(M')+|E|=\chi(M)-|V|+|E| = \chi(M)-\chi(X).
\]
\end{proof}

\begin{definition}\label{notation}
If $F:M\to \RR$ and $s\in\RR$, we let
\[
M[s] = M\cap F^{-1}(s).
\]
If $I\subset \RR$ is an interval, we let $MI:=M\cap F^{-1}(I)$.  Thus, for example,
\begin{align*}
M[s,t] &= M\cap \{s\le F\le t\}, \\
M(s,t) &= M\cap \{s < F < t\},
\end{align*}.
\end{definition}

\begin{lemma}\label{noncritical-lemma}
Suppose that $F:M\to\RR$ is a Rad\'o function and that
\begin{enumerate}[\upshape(1)]
\item There are no critical points in $M(a,b)$, and
\item $M[a',b']$ is compact for $a< a' < b' < b$.
\end{enumerate}
If $I$ is an interval in $(a,b)$ and if $t\in I$, then $M\cap F^{-1}(I)$ is homeomorphic to $M(t)\times I$.
\end{lemma}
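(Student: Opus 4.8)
The plan is to run the classical ``no critical points implies a product'' argument of Morse theory, adapted to the topological setting of Radó functions.

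\emph{Reduction.} It is enough to construct, for each compact subinterval $[a',b']\subset(a,b)$, a homeomorphism $M[a',b']\to M(t)\times[a',b']$ (with $t$ fixed once and for all) that carries $F$ to the projection $\pi_2$ onto the second factor. Indeed, applied to an exhaustion $a'\downarrow a$, $b'\uparrow b$, such homeomorphisms can be modified inductively so as to agree on the smaller slab --- after precomposing with the previous one's inverse the discrepancy is a self-homeomorphism of a product over $\pi_2$, i.e.\ a continuous family of homeomorphisms of $M(t)$, and such a family extends over the larger interval by being made constant outside --- so their union is a homeomorphism $M(a,b)\to M(t)\times(a,b)$ intertwining $F$ with $\pi_2$; restricting it over an arbitrary interval $I\subset(a,b)$ gives $MI\cong M(t)\times I$, and since the product structure also identifies all the level sets $M(r)$, $r\in(a,b)$, with one another, this is $M(s)\times I$ for the $s\in I$ of the statement. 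To treat a fixed $[a',b']$, cover it by finitely many open intervals $J_1,\dots,J_k$ over each of which $MJ_i\cong M(t_i)\times J_i$ over $\pi_2$ (the local statement addressed next) and glue these consecutively; gluing two such structures over an overlap $J_i\cap J_{i+1}$ is routine point-set topology, since there they differ by a self-homeomorphism of a product over the identity of the value-interval.

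\emph{Local product over a value.} Fix $s\in(a,b)$; we must produce an open $J\ni s$ with $MJ\cong M(s)\times J$ over $\pi_2$. Choosing $a<a'<s<b'<b$, the level set $M(s)$ is a closed subset of the compact set $M[a',b']$, hence compact; and since $M$ has no boundary and every point of $M(s)$ is a regular point, Definition~\ref{rado-definition} (with valence $2$) gives each such point a disk neighborhood in which $\{F=s\}$ is a single arc through it, so $M(s)$ is a compact $1$-manifold --- a finite disjoint union of circles. The essential input is a normal form: \emph{every regular interior point $p$ has a neighborhood $U$ and a homeomorphism $U\to(-1,1)^2$ carrying $p$ to $0$ and $F$ to $\rho\circ\pi_2$ for some increasing homeomorphism $\rho$ of $(-1,1)$ onto an interval about $F(p)$.} Granting this, choose finitely many such ``flow boxes'' covering $M(s)$; as each component is a circle, the boxes around it may be taken to overlap consecutively, and one glues them around the circle (over a common, small enough interval $J\ni s$) into the required homeomorphism $MJ\cong M(s)\times J$. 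That for $J$ small $MJ$ lies in any prescribed neighborhood of $M(s)$, so that nothing outside the flow boxes interferes, follows from compactness of $M[a',b']$: otherwise a sequence $x_n\in M[s-1/n,s+1/n]$ avoiding a fixed neighborhood of $M(s)$ would have a subsequential limit lying in $M(s)$ but outside that neighborhood.

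\emph{The normal form, and the main obstacle.} It remains to prove the normal form, and this is the heart of the matter --- the only place where the Radó hypotheses are used in place of smoothness. Let $p$ be a regular interior point and shrink, using Definition~\ref{rado-definition}, to a disk neighborhood $U$ with $F(U)\subset(a,b)$ in which $C:=U\cap\{F=F(p)\}$ is a single arc through $p$ with both endpoints on $\partial U$ and $C\setminus\partial U$ interior; then $U\setminus C$ has two components $V,W$. On the connected set $V$ one has $F\ne F(p)$, so $F-F(p)$ has one sign there, and likewise on $W$; item~\eqref{crossing-item} of the definition forces $F>F(p)$ on one, say $V$, and $F<F(p)$ on the other, and moreover $F$ has no interior local extremum in $U$ (such a point would be critical). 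Now identify the pair $(U,C)$ homeomorphically with $(\overline{\mathbb D},\text{diameter})$, sending $V,W$ to the two half-disks, and transport $F$ to $\tilde F$ on $\overline{\mathbb D}$. For $c$ near $F(p)$ the level set $\{\tilde F=c\}$ has no circle component: such a component would bound a disk $\Delta\subset\mathbb D$ with $\tilde F=c$ on $\partial\Delta$; an interior maximum of $\tilde F|_{\overline\Delta}$ with value $>c$ would have nearby larger values (by item~\eqref{crossing-item}, since it is not a local extremum) and those lie in $\Delta$, which is impossible, so $\tilde F\le c$ on $\overline\Delta$, and symmetrically $\tilde F\ge c$, whence $\tilde F\equiv c$ on $\Delta$, contradicting that interior points of $\Delta$ have valence $2$. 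Thus $\{\tilde F=c\}$ is a disjoint union of arcs from $\partial\mathbb D$ to $\partial\mathbb D$; and a compactness argument ($|F-F(p)|$ is bounded below away from $C$ on a compact subdisk) shows that, on a slightly smaller disk and for $c$ close enough to $F(p)$, there is exactly one such arc and it is close to the diameter. Sending the family of these level arcs to the vertical segments $\{\xi=\text{const}\}$, reparametrizing $F$ accordingly, produces the chart. This last straightening --- and in particular the uniqueness and control of the nearby level arcs --- is the expected main obstacle; it amounts to the classical fact, going back to Radó and to Morse's topological function theory, that a Radó function is topologically a coordinate projection near a regular point, and it may alternatively be cited from there. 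The remaining pieces --- gluing flow boxes around a circle, gluing slabs over overlapping value-intervals, and passing to the exhaustion --- are routine.
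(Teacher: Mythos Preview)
Your approach is genuinely different from the paper's, and the difference matters. The paper does not attempt to build local ``flow boxes'' at all. It first doubles $M$ to reduce to the boundaryless case (a step you omit --- you simply assume ``$M$ has no boundary'', but the lemma is used later for surfaces with boundary), and then invokes Corollary~\ref{annulus-corollary} from the Appendix. That corollary is proved by a purely global argument: the level sets are finite unions of circles (easy from valence $2$ and compactness); no such circle can bound a region (else $F$ would have an interior extremum); hence in each thin slab every component is forced, by Lemma~\ref{annulus-lemma} on disjoint nontrivial curves in an annulus, to be an annulus with one boundary circle in each bounding level. Stacking these slabs gives the product. No local normal form is needed anywhere.

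Your route requires the normal form --- that near a regular point $F$ is topologically a coordinate projection --- and this is where the argument is incomplete. You correctly show that in a disk chart the nearby level sets $\{\tilde F=c\}$ contain no circles and hence are unions of boundary-to-boundary arcs, and that for $c$ close to $F(p)$ they are Hausdorff-close to the diameter. But ``close to the diameter'' does not by itself give a \emph{single} arc: several disjoint chords can all be close to a diameter, and your sign-alternation constraint across arcs only forces the number of arcs separating the diameter from the far side to be odd, not to equal one. You acknowledge this is ``the expected main obstacle'' and offer to cite it from Rad\'o or Morse's topological function theory, but the citation is vague and the sketch as written does not close the gap. The gluing steps you call routine (flow boxes around a circle; slabs over overlapping intervals; exhaustion) are indeed standard once the normal form is in hand, but without it the whole construction is unsupported.

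In short: the paper's annulus-counting argument sidesteps exactly the point where your argument is weakest. If you want to keep your approach, you must either give a complete proof of the local normal form for Rad\'o functions or supply a precise reference; otherwise, the global argument via Proposition~\ref{annulus-proposition} and Corollary~\ref{annulus-corollary} is both shorter and self-contained.
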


\begin{proof}
If $M$ has no boundary, this is Corollary~\ref{annulus-corollary} in the Appendix.
The general case follows by doubling $M$.
\end{proof}

\begin{lemma}\label{level-graph-removal}
Suppose that $M$ is a compact $2$-manifold without boundary, that $F:M\to\RR$ is a Rad\'o function,
and that the set $Q$ of local maxima and local minima of $F$ is finite.
Let $\Tt\subset \RR$ be a finite set that includes $F(Q)$.
Let $X=\cup_{t\in \Tt}M[t]$.  Then
\begin{equation}\label{fundamental-inequality}
\begin{aligned}
  \chi(M) \le \chi(X) 
  &= \sum_{p\in X}\frac12(2-v(F,p)) \\
  &= |Q| + \sum_{p\in X, \, v(F,p)> 2} \frac12(2 - v(F,p)),
\end{aligned}
\end{equation}
with equality if and only if each component of $M \setminus X$ is an annulus.
In particular, if $X$ contains all the critical points of $F$, then
\begin{equation}\label{fundamental-equality}
 \chi(M) = \sum_{p\in M}\frac12(2-v(F,p)).
\end{equation}
Furthermore, if $M(a,b)$ contains no critical points, then
$M(a,b)$ is homeomorphic to $M[t] \times (a,b)$ for each $t\in (a,b)$.
\end{lemma}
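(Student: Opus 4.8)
The plan is to reduce everything to the two combinatorial lemmas already established, namely Lemma~\ref{euler-lemma} (Euler characteristic of a finite network) and Lemma~\ref{graph-removal-lemma} (removing a network from a boundaryless surface adds Euler characteristics), together with Lemma~\ref{noncritical-lemma} for the last assertion. First I would verify that $X=\bigcup_{t\in\Tt}M[t]$ is a finite network. Each $M[t]$ is, by the definition of a Rad\'o function and compactness of $M$, a finite graph: the valence-zero points of $F$ in the level $t$ are isolated vertices, and away from those, a covering/compactness argument using the neighborhoods $U$ from Definition~\ref{rado-definition} shows $M[t]$ is a finite union of embedded arcs and circles meeting only at points of valence $\ge 4$. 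Taking $\Tt$ finite and containing $F(Q)$ guarantees only finitely many levels contribute, and distinct levels are disjoint, so $X$ is a finite network; designate as vertices all points of valence $\ne 2$ together with (harmlessly) any finite extra set if needed to make each edge an honest arc. Then $\chi(X)=\sum_{p\in X}\tfrac12(2-v(F,p))$ is exactly Lemma~\ref{euler-lemma}, and splitting off the valence-zero contribution (which is $|Q|$, since the valence-zero points are precisely the local maxima and minima of $F$, all lying in $X$ because $F(Q)\subset\Tt$) and noting valence-$2$ points contribute zero gives the displayed chain of equalities in~\eqref{fundamental-inequality}.

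Next, the inequality $\chi(M)\le\chi(X)$: by Lemma~\ref{graph-removal-lemma}, $\chi(M)=\chi(M\setminus X)+\chi(X)$, so I need $\chi(M\setminus X)\le 0$, with equality iff every component is an annulus. Each component $W$ of $M\setminus X$ is an open surface lying in some slab $M(s,t)$ with $s,t$ consecutive elements of $\Tt$, hence containing no critical points of $F$ (all critical points that are local extrema are in $X$ by construction, and interior saddles of valence $\ge 4$ are vertices of $X$ — but here I must be slightly careful, since $X$ need not contain all saddles). In any case $W$ is a surface with finite topology and no boundary whose Euler characteristic is $\le 0$ unless it is a sphere or disk; a sphere is impossible since $W$ is noncompact (it is a component of an open set with $X\ne\emptyset$ touching its frontier), and a disk is ruled out because $F$ would attain an interior extremum on $\overline W$ not accounted for — more cleanly, if $W$ were a disk then $F|W$ would have an interior critical point by a degree/winding argument on $\partial W\subset X$, contradicting that $W$ lies in a critical-point-free slab. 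Hence $\chi(W)\le 0$ for every component, with equality precisely for annuli, giving~\eqref{fundamental-inequality} and its equality case.

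For~\eqref{fundamental-equality}: if $X$ already contains all critical points of $F$, then $M\setminus X$ contains no critical points at all, so by Lemma~\ref{noncritical-lemma} each component, being a boundaryless slab piece $W\subset M(s,t)$ with no critical points, is homeomorphic to $M(t_0)\times(\text{interval})$, an annulus; hence equality holds, $\chi(M)=\chi(X)=\sum_{p\in X}\tfrac12(2-v(F,p))$, and since every $p\notin X$ has valence $2$ and contributes zero, this equals $\sum_{p\in M}\tfrac12(2-v(F,p))$. Finally, the last sentence is immediate from Lemma~\ref{noncritical-lemma} applied with the interval $(a,b)$: if $M(a,b)$ has no critical points then $M\cap F^{-1}((a,b))=M(a,b)$ is homeomorphic to $M[t]\times(a,b)$ for each $t\in(a,b)$. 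The main obstacle I anticipate is the step ruling out disk components of $M\setminus X$ (equivalently, proving $\chi(W)\le 0$): this is where one genuinely uses that $W$ sits in a critical-point-free slab and that $F$ restricted to $\partial W$ (a union of level curves at the two slab endpoints) forces, via a Rad\'o/harmonic-type argument, an interior critical point in any disk — so one should invoke Lemma~\ref{noncritical-lemma} there too, or argue directly that a disk slab component would make the level sets of $F$ in $W$ incompatible with clause~\eqref{crossing-item} of Definition~\ref{rado-definition}.
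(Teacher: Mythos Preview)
Your overall strategy matches the paper's exactly: write $\chi(M)=\chi(X)+\chi(M\setminus X)$ via Lemma~\ref{graph-removal-lemma}, compute $\chi(X)$ via Lemma~\ref{euler-lemma}, and show $\chi(W)\le 0$ for each component $W$ of $M\setminus X$, with equality iff $W$ is an annulus.

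The genuine gap is in ruling out the open-disk case for $W$. You correctly flag that $W$ need \emph{not} be critical-point-free (since $\Tt$ need not contain the saddle values), but then both of your proposed arguments for excluding a disk lean on exactly that false premise: producing ``an interior critical point by a degree/winding argument'' is no contradiction, because $W$ may well contain saddles; and Lemma~\ref{noncritical-lemma} is unavailable for the same reason. What \emph{is} guaranteed---and what the paper actually uses---is that $W$ contains no local maxima or minima of $F$, since $F(Q)\subset\Tt$ forces $Q\subset X$. From this alone: $\sup_W F=b$ and $\inf_W F=a$ (the consecutive elements of $\Tt$ bracketing the slab), and neither is attained on $W$. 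Since $W$ is homeomorphic to a closed surface with finitely many points removed, it must have at least two punctures; concretely, for any $c\in(a,b)$ the set $W\cap M[c]$ is compact and separates $W$ into the two non-relatively-compact open pieces $W\cap\{F<c\}$ and $W\cap\{F>c\}$, so $W$ has at least two ends. Hence $\chi(W)\le 0$, with equality iff $W$ is a twice-punctured sphere, i.e., an annulus. Once you replace your disk argument with this, the rest of your proof (including the equality case via Lemma~\ref{noncritical-lemma} and the final assertion) goes through as written.
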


\begin{proof}
By Lemma~\ref{graph-removal-lemma},
\begin{equation}\label{graph-removal-repeated}
  \chi(M) = \chi(X) + \chi(M\setminus X).
\end{equation}
Let $W$ be a component of $M\setminus X$.  Then $W$ is a component of $M(a,b)$
for two successive elements $a$, $b$ in $\Tt$.  

By Lemma~\ref{graph-removal-lemma}, $W$ is an open manifold of finite topology.  Thus it is homeomorphic to a closed
surface with finitely many points removed.  Since $W$ has no local maxima, 
\[
  \sup_WF = b.
\]
Since $W$ has no local minima, 
\[
\inf_WF=a. 
\]
 Thus $W$ is homeomorphic to  closed surface with at least two points
removed.  It follows that
\[
 \chi(W) \le 0,
\]
with equality if and only if $W$ is an annulus.  
Hence (by~\eqref{graph-removal-repeated}) the inequality~\eqref{fundamental-inequality} holds, with equality if and only if each $W$ is an annulus.

The last assertion is a special case of Lemma~\ref{noncritical-lemma}.
\end{proof}

\begin{theorem}\label{main-theorem}
Suppose that $M$ is a compact $2$-manifold without boundary
and that $F:M\to \RR$ is a Rad\'o function with a finite set $Q$ of local maxima and local minima.
Then there are only finitely many points $p$ with $v(F,p)\ne 2$, and
\begin{equation}\label{main-closed-formula}
  \chi(M) = \sum_{p\in M} \frac12(2 - v(F,p)).
\end{equation}
Equivalently,
\[
  \chi(M) = \sum_k (1 - k ) \, |V_{2k}|,
\]
where $V_n$ is the set of points $p$ such that $v(F,p)=n$.
\end{theorem}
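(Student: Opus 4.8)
The plan is to deduce Theorem~\ref{main-theorem} from Lemma~\ref{level-graph-removal} by choosing the finite set $\Tt$ large enough to capture all the critical points. The only genuine issue is the first assertion — that there are only finitely many points of valence $\neq 2$ — since once we know the critical set is finite, the formula~\eqref{main-closed-formula} is exactly the special case~\eqref{fundamental-equality} of Lemma~\ref{level-graph-removal} (applied with $\Tt$ equal to the union of $F(Q)$ and the $F$-values of all critical points), and the reformulation in terms of $|V_{2k}|$ is just bookkeeping using Lemma~\ref{parity-lemma} (interior points have even valence, and here $M$ has no boundary).

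So the heart of the matter is finiteness of the critical set. First I would note that since $M$ is compact, $F$ attains a max and a min, and more importantly $F(M)=[\min F,\max F]$ is a compact interval; it suffices to show the critical points in each $F^{-1}([c,d])$ with $[c,d]$ a small interval form a finite set, or even just to get a local bound near each level. I would argue by contradiction: suppose there are infinitely many critical points $p_n$. By compactness, pass to a subsequence converging to some $p\in M$, with $t_n:=F(p_n)\to t:=F(p)$. The first step is to reduce to the case $t_n\equiv t$ for all $n$, i.e.\ to show that critical points accumulating at $p$ must eventually lie in the single level set $\{F=F(p)\}$. This should follow from the local structure near $p$: by Definition~\ref{rado-definition} there is a neighborhood $U$ of $p$ in which $\{F=F(p)\}$ is a finite union of arcs $C_1,\dots,C_v$ from $p$ to $\partial U$, meeting only at $p$, so $U\setminus\{F=F(p)\}$ has $v$ components on which $F-F(p)$ alternates in sign (as in the parenthetical proof of Lemma~\ref{parity-lemma}). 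A point $q\in U$ with $F(q)=t_n\neq t$ lies in one of these sign-regions, and I need to argue such a $q$ cannot be a critical point — because within an open sign-region $F$ is ``like a harmonic function'' with no local extrema and the level sets foliate nicely; more precisely, one can invoke Lemma~\ref{noncritical-lemma} applied to a slightly shrunk region to see that the part of $U$ lying strictly between two consecutive relevant levels contains no critical points once $U$ is small. The cleanest route is probably: choose $U$ small enough that the only critical point in $\overline U$ at level $t$ is $p$ itself (possible since, a priori, even if the full critical set isn't yet known finite, one can shrink $U$ so that $\overline U\cap\{F=F(p)\}$ is a tree with $p$ its only branch point by part~(1) of the definition), and then use Lemma~\ref{noncritical-lemma} on $U\cap F^{-1}((t-\eps,t))$ and $U\cap F^{-1}((t,t+\eps))$ — each is a product $M[s]\times(\text{interval})$, hence has no critical points — to conclude that for $n$ large, $p_n$ cannot have $F(p_n)\neq t$.

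Granting that reduction, all the $p_n$ lie in the single level set $Z:=\{F=F(p)\}$ and converge to $p$. Now I would derive a contradiction from the arc structure in Definition~\ref{rado-definition} at the point $p$: in the neighborhood $U$, $Z\cap U$ is the finite union of arcs $C_1,\dots,C_v$ joined only at $p$. A critical point $p_n\in Z\cap U$ with $p_n\neq p$ would be an interior point with valence $v(F,p_n)\geq 4$ (valence $\neq 2$ and even by Lemma~\ref{parity-lemma}), so at least four arcs of $Z$ emanate from $p_n$; but $p_n$ lies in the relative interior of one of the $C_i$ (being on $Z\cap U$ but not equal to $p$), and an embedded arc has only two directions at an interior point, while the other arcs of $Z\cap U$ all pass through $p$, not $p_n$ — so $Z\cap U$ near $p_n$ is a single embedded arc, forcing $v(F,p_n)=2$, a contradiction. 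Hence only finitely many critical points, and the theorem follows. The main obstacle I anticipate is making the first reduction airtight without circularity, i.e.\ extracting a small enough $U$ and applying Lemma~\ref{noncritical-lemma} correctly given that we do not yet know the critical set is globally finite — but the local definition of Rad\'o function already gives a $U$ in which $Z$ is a finite tree with one branch point, and that is all that is needed to run the argument on that fixed $U$.
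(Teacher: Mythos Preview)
Your proposal correctly identifies finiteness of the critical set as the crux, and your last step is sound: once all the $p_n$ lie on the single level $\{F=F(p)\}$, the arc description in Definition~\ref{rado-definition} at $p$ forces $v(F,p_n)=2$ for $p_n\ne p$ in $U$. The gap is in the reduction step. You want to rule out critical $p_n\to p$ with $F(p_n)\ne F(p)$, and you invoke Lemma~\ref{noncritical-lemma} on $U\cap F^{-1}((t-\eps,t))$ and $U\cap F^{-1}((t,t+\eps))$. But that lemma takes ``no critical points in $M(a,b)$'' as a \emph{hypothesis} and outputs a product structure; you are using it in reverse. Your attempted patch (``the local definition already gives a $U$ in which $Z$ is a finite tree with one branch point, and that is all that is needed'') does not help either: knowing the shape of the one level set $\{F=F(p)\}$ in $U$ says nothing about the level sets $\{F=s\}$ for $s\ne F(p)$ in that same $U$, because the neighborhood in Definition~\ref{rado-definition} is chosen point by point. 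Remark~\ref{untamed-example} shows this is not a technicality: for a non-tame Rad\'o function on a strip, critical points at pairwise distinct levels accumulate at a regular point. Compactness and $\partial M=\emptyset$ do ultimately preclude this, but not via the local mechanism you propose.

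The paper's route sidesteps local analysis entirely. It uses the \emph{inequality} part of Lemma~\ref{level-graph-removal}: for every finite $\Tt\supset F(Q)$ the set $X=\bigcup_{t\in\Tt}M[t]$ contains at most $2|Q|-\chi(M)$ critical points. Given any finite collection of critical points, enlarge $\Tt$ by their $F$-values; they then lie in $X$, so there are at most $2|Q|-\chi(M)$ of them. Hence every finite subset of the critical set has size at most $2|Q|-\chi(M)$, so the critical set itself does. Then take $\Tt=\{F(p):v(F,p)\ne2\}$ and apply~\eqref{fundamental-equality}.
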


\begin{proof}
   Note that $Q$ is the set of points of valence $0$.
Let $\Tt\subset \RR$ be a finite set that includes $F(Q)$. 
 Let $X=\cup_{t\in \Tt}M[t]$.
By Lemma~\ref{level-graph-removal},
\begin{align*}
\chi(M)
&\le |Q| + \sum_{p\in X, \, v(F,p)> 2} \frac12(2 - v(F,p))  \\
&\le |Q| - |\{p\in X: v(F,p)>2\}|  \\
&= 2|Q| - |\{p\in X: v(F,p)\ne 2\}| \\
&= 2|Q| - |C|
\end{align*}
where $C$ is the set of critical points of $F$ in $X$.
Thus $X$ has at most 
\[
     2|Q| - \chi(M)
\]
critical points.  Since this bound holds for every such $\Tt$, we see that $M$ has at most $2|Q|-\chi(M)$ critical points.  
Equation~\eqref{main-closed-formula} now follows from~~\eqref{fundamental-equality} in
Lemma~\ref{level-graph-removal} by letting 
\[
  \Tt = \{ F(p): v(F,p)\ne 2\}.
\]
\end{proof}

\begin{corollary}\label{maxwell-corollary}
The number of saddle points, counting multiplicity, is equal to the number of local maxima and local minima minus the Euler 
characteristic:
\[
  \sum_{w(F,p)>0} w(F,p) = |Q| - \chi(M).
\]
\end{corollary}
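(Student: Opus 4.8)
The plan is to derive Corollary~\ref{maxwell-corollary} directly from Theorem~\ref{main-theorem} by a bookkeeping computation, splitting the sum over $p\in M$ according to the valence $v(F,p)$. By Lemma~\ref{parity-lemma}, every interior point of a boundaryless $M$ has even valence, so write $v(F,p)=2k$ for various $k\ge 0$. The points with $k=0$ are exactly the elements of $Q$ (the local maxima and local minima), contributing $\tfrac12(2-0)=1$ each; the points with $k=1$ are the regular points, contributing $0$ each; and the points with $k\ge 2$ are precisely the saddles, where by Definition~\ref{saddle-definition} the multiplicity is $w(F,p)=k-1$, so each contributes $\tfrac12(2-2k)=1-k=-w(F,p)$.

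First I would invoke the finiteness statement in Theorem~\ref{main-theorem}: there are only finitely many $p$ with $v(F,p)\ne 2$, so $Q$ is finite and the set of saddles is finite, and all the sums below have only finitely many nonzero terms. Then I would rewrite the master formula~\eqref{main-closed-formula} as
\[
  \chi(M) = \sum_{p\in M}\frac12(2-v(F,p))
          = \sum_{p\in Q} 1 \;+\; \sum_{w(F,p)>0}\bigl(1 - \tfrac12 v(F,p)\bigr)
          = |Q| - \sum_{w(F,p)>0} w(F,p),
\]
using that regular points contribute nothing and that $\tfrac12 v(F,p)-1 = w(F,p)$ for each saddle. Rearranging gives exactly
\[
  \sum_{w(F,p)>0} w(F,p) = |Q| - \chi(M),
\]
which is the claimed identity.

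There is really no substantive obstacle here: the corollary is a direct restatement of Theorem~\ref{main-theorem} once the valence classification of points (valence $0$ $=$ extrema, valence $2$ $=$ regular, valence $\ge 4$ $=$ saddles of multiplicity $\tfrac12 v-1$) is made explicit. The only thing to be careful about is keeping track of signs when converting $\tfrac12(2-v(F,p))$ into $-w(F,p)$, and noting that the odd-valence case does not arise because $M$ has no boundary (Lemma~\ref{parity-lemma}). So the proof is essentially a one-line deduction, and I would present it as such.
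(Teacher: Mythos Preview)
Your proof is correct and is essentially identical to the paper's own argument: both split the sum in~\eqref{main-closed-formula} according to whether $v(F,p)=0$, $v(F,p)=2$, or $v(F,p)\ge 4$, use that no odd valences occur (Lemma~\ref{parity-lemma}), and rearrange. The only cosmetic difference is that you explicitly invoke the finiteness clause of Theorem~\ref{main-theorem} before computing, which the paper leaves implicit.
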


\begin{proof}
Each point of valence $0$ (i.e., each point of $Q$) contributes $1$ to the sum in~\eqref{main-closed-formula},
each point of valence $2$ contributes $0$, and there are no points of odd valence.  
Thus~\eqref{main-closed-formula} becomes
\begin{align*}
  \chi(M) 
  &= |Q| + \sum_{v(F,p)\ge 4} \frac12(2- v(F,p))
  \\
  &= |Q| - \sum_{w(F,p)>0} w(F,p).
\end{align*}
\end{proof}

A version of Corollary~\ref{maxwell-corollary} in the case of the $2$-sphere
 occurs in an 1870 paper~\cite{maxwell} by the physicist Maxwell.
In particular, Maxwell does allow saddles with multiplicity.

\section{Surfaces with Boundary}

\newcommand{\Vint}{V^\textnormal{int}}
\newcommand{\Vb}{V^\partial}

\begin{theorem}\label{first-version-boundary-theorem}
Suppose that $M$ is a compact $2$-manifold with boundary, that $F:M\to\RR$ 
is a Rad\'o function, and that there are only finitely many points of valence $0$.
Then
\[
   \chi(M)  =  \sum_{p\in M\setminus \partial M} \frac12(2-v(F,p))  + \sum_{p\in \partial M} \frac12 (1 - v(F,p)).
\]
Equivalently,
\[
    \chi(M) = \sum_{k=0}^\infty\left( (1-k)|\Vint_{2k}| +  \frac12(1-k)|\Vb_k| \right),
\]
where $\Vint_n$ is the set of interior points $p\in M\setminus \partial M$ with $v(F,p)=n$,
and $\Vb_n$ is the set of boundary points $p\in \partial M$ with $v(F,p)=n$. 
\end{theorem}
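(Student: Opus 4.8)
The plan is to reduce to the boundaryless case (Theorem~\ref{main-theorem}) by doubling $M$ across its boundary. Let $\widehat M$ be the double of $M$, i.e. two copies of $M$ glued along $\partial M$ by the identity. It is a compact $2$-manifold without boundary, and since $\partial M$ is a finite disjoint union of circles, $\chi(\widehat M) = 2\chi(M) - \chi(\partial M) = 2\chi(M)$. Let $\widehat F:\widehat M\to\RR$ be the function agreeing with $F$ on each copy of $M$; it is well defined and continuous because the two copies are glued by the identity map of $\partial M$.

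\textbf{$\widehat F$ is Rad\'o, and the valences.} Next I would check that $\widehat F$ is a Rad\'o function and compute $v(\widehat F,\cdot)$ from $v(F,\cdot)$. If $p$ lies in one of the two open copies of $M$ (away from $\partial M$), then a neighborhood of $p$ in $\widehat M$ is just a neighborhood of $p$ in the interior of $M$, so the Rad\'o conditions hold verbatim and $v(\widehat F,p)=v(F,p)$. If $p\in\partial M$, choose a Rad\'o neighborhood $U$ of $p$ in $M$ (shrinking it to a coordinate half-disk centered at $p$). Because $F$ is Rad\'o, condition~(5) of Definition~\ref{rado-definition} and the remark following that definition give $U\cap\partial M\cap\{F=F(p)\}=\{p\}$; hence none of the arcs $C_1,\dots,C_v$ of $U\cap\{F=F(p)\}$ runs along $\partial M$, and each $C_i\setminus\{p\}$ lies in the interior of $M$ and so ends on the free part $(\partial U)\setminus\partial M$. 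Passing to the double, $\widehat U:=U\cup_{U\cap\partial M}U$ is a neighborhood of $p$ in $\widehat M$ in which $p$ is an interior point, and $\widehat U\cap\{\widehat F=\widehat F(p)\}$ consists of the $v$ arcs $C_i$ together with their $v$ mirror images --- $2v$ embedded arcs that pairwise meet only at $p$, each joining $p$ to $\partial\widehat U$, and each crossing between $\{\widehat F>\widehat F(p)\}$ and $\{\widehat F<\widehat F(p)\}$ (the $C_i$ by hypothesis, their mirrors by symmetry). Thus $\widehat F$ is Rad\'o at $p$ with $v(\widehat F,p)=2\,v(F,p)$.

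\textbf{Applying the closed case and bookkeeping.} The valence-$0$ points of $\widehat F$ are exactly the interior valence-$0$ points of $F$, each occurring twice, together with the boundary valence-$0$ points of $F$ (where $2\cdot 0=0$), each occurring once; in particular there are finitely many. So Theorem~\ref{main-theorem} applies to $\widehat F$ on $\widehat M$, giving finiteness of $\{p:v(\widehat F,p)\ne 2\}$ (whence also $\{p\in M:v(F,p)\ne 2\}$ is finite) and
\[
 2\chi(M)=\chi(\widehat M)=\sum_{p\in\widehat M}\tfrac12\bigl(2-v(\widehat F,p)\bigr).
\]
Splitting this sum according to whether $p$ is interior to $M$ (counted twice, $v(\widehat F,p)=v(F,p)$) or lies on $\partial M$ (counted once, $v(\widehat F,p)=2\,v(F,p)$) yields
\[
 2\chi(M)=2\sum_{p\in M\setminus\partial M}\tfrac12\bigl(2-v(F,p)\bigr)+\sum_{p\in\partial M}\tfrac12\bigl(2-2\,v(F,p)\bigr),
\]
and dividing by $2$ gives the first displayed formula. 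The equivalent form in terms of $|\Vint_{2k}|$ and $|\Vb_k|$ follows by collecting terms, using that interior valences are even (Lemma~\ref{parity-lemma}).

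\textbf{Main obstacle.} The one step that requires care --- and which I would write out in full --- is the behaviour at boundary points: that the doubled level set through $p\in\partial M$ has exactly $2\,v(F,p)$ branches, with no coincidences and no extra arcs. This is precisely what the ``no level arc along $\partial M$'' consequence of the Rad\'o axioms guarantees. Everything else is additivity of Euler characteristics.
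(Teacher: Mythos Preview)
Your proof is correct and follows essentially the same approach as the paper: double $M$ across $\partial M$, observe that the extension $\widehat F$ is Rad\'o with $v(\widehat F,p)=2v(F,p)$ at boundary points, and apply Theorem~\ref{main-theorem} together with $\chi(\widehat M)=2\chi(M)$. Your write-up is more detailed than the paper's (which simply asserts $|\tilde V_{2k}|=2|\Vint_{2k}|+|\Vb_k|$ and leaves the Rad\'o verification as ``follows easily''), but the argument is the same.
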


\begin{proof}
Let $\tilde M$ be the closed manifold obtained by doubling $M$. That is, we take two copies of $M$ and attach them
along their boundary. 
 Let $\tilde F$ be the obvious extension of $F$ to $\tilde M$.  
 Since $F:M\to\RR$ is Rad\'o, it follows easily that $\tilde F:\tilde M\to \RR$ is also Rad\'o.
 Let $\tilde V_n$ be the set of points $p\in\tilde M$
with $v(\tilde F,p)=n$.  Then 
\[
   |\tilde V_{2k}|  = 2 |\Vint_{2k}|  + |\Vb_k|.
\] 
Applying Theorem~\ref{main-theorem} to $\tilde M$ gives
\begin{align*}
\chi(M)
&=
\frac12\chi(\tilde M) \\
&=
\frac12\sum_k (1-k)\,|\tilde V_{2k}|   \\
&=
\frac12\sum_k (1-k)(2 |\Vint_{2k}| + |\Vb_k|) \\
&=
\sum_k \left( (1-k)|\Vint_{2k}| + \frac12(1-k)|\Vb_k| \right).
\end{align*}
\end{proof}

The statement of Theorem~\ref{first-version-boundary-theorem} is
fairly simple.  However, the theorem can be rewritten in a way
that makes it easier to use.

\begin{theorem}\label{main-boundary-theorem}
Suppose that $F:M\to \RR$ is a Rad\'o function on a compact $2$-manifold with boundary.
Let $Q$ be 
\begin{enumerate}[\upshape(i)]
\item  the set of interior local maxima and interior local minima of $F$, together with
\item the set of local {\bf minima} of $F|\partial M$.
\end{enumerate}
Suppose that $Q$ is a finite set.
Then
\begin{equation}\label{general-formula}
\sum_{n\ge 2} (n-1) \,|\Vint_{2n}| 
+
\sum_{n\ge 1} n (  |\Vb_{2n}|  + |\Vb_{2n+1}|)  
=
|Q| - \chi(M).
\end{equation}
\end{theorem}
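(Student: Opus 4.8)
\textit{Strategy.} I would deduce Theorem~\ref{main-boundary-theorem} from Theorem~\ref{first-version-boundary-theorem} by combinatorial bookkeeping, the one substantive ingredient being an exact count of the even-valence boundary points.

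\textit{Counting the boundary extrema.} By the definition of $Q$, and since interior and boundary points are disjoint, $|Q| = |\Vint_0| + \beta$, where $\Vint_0$ is the set of interior points of valence $0$ (exactly the interior local extrema of $F$) and $\beta$ is the number of local minima of $F|\partial M$. Because $(\partial M)\cap F^{-1}(t)$ is discrete for each $t$, $F|\partial M$ is constant on no subarc and each of its local extrema is strict; in particular $\beta$ is finite, since the local minima of $F|\partial M$ lie in $Q$. I would first show that $\beta$ also equals the number of local maxima of $F|\partial M$. Fix a component $\gamma$ of $\partial M$; it is a circle, and its local minima of $F$, say $p_1,\dots,p_m$ in cyclic order, are finite in number; let $A_i$ be the closed subarc from $p_i$ to $p_{i+1}$. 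Since $F$ is nonconstant near the strict local minima $p_i$ and $p_{i+1}$, the maximum of $F$ over $A_i$ is attained at an interior point $q_i$ of $A_i$, which is then a strict local maximum of $F|\gamma$; moreover $q_i$ is the \emph{only} local maximum of $F|\gamma$ in the interior of $A_i$, since two of them would force the minimum of $F$ over the subarc between them to be attained at an interior point, producing a local minimum of $F|\gamma$ strictly between $p_i$ and $p_{i+1}$, which is impossible. Every local maximum of $F|\gamma$ lies in one of these open arcs (it cannot be a $p_j$, a strict local minimum), so $\gamma$ has exactly $m$ local maxima. Summing over the components of $\partial M$ gives $\beta = \#\{\text{local maxima of }F|\partial M\}$.

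\textit{The identity for $|Q|$ and the arithmetic.} By Lemma~\ref{parity-lemma}, the even-valence boundary points are exactly the local extrema of $F|\partial M$, and the local minima and maxima are disjoint, so $\sum_{n\ge 0}|\Vb_{2n}| = 2\beta$ and hence
\[
 |Q| = |\Vint_0| + \tfrac12\sum_{n\ge 0}|\Vb_{2n}| .
\]
In particular $M$ has only finitely many points of valence $0$ (interior ones lie in $Q$, and $|\Vb_0|\le 2\beta$), so Theorem~\ref{first-version-boundary-theorem} applies; and, as in its proof via the double and Theorem~\ref{main-theorem}, all but finitely many of the terms below vanish. Rewriting Theorem~\ref{first-version-boundary-theorem} as
\[
 -\chi(M) = \sum_{k\ge 0}(k-1)\,|\Vint_{2k}| + \tfrac12\sum_{n\ge 0}(n-1)\,|\Vb_n|
\]
and adding the displayed formula for $|Q|$, the interior part becomes $|\Vint_0| + \sum_{k\ge 0}(k-1)|\Vint_{2k}| = \sum_{n\ge 2}(n-1)|\Vint_{2n}|$ (the $k=0$ terms cancel, the $k=1$ term vanishes), and the boundary part $\tfrac12\sum_{n\ge 0}(n-1)|\Vb_n| + \tfrac12\sum_{n\ge 0}|\Vb_{2n}|$ collapses, upon separating even and odd $n$, to $\sum_{n\ge 1} n\,\bigl(|\Vb_{2n}| + |\Vb_{2n+1}|\bigr)$. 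Adding the two parts yields exactly the left-hand side of~\eqref{general-formula}.

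\textit{Main obstacle.} The hard part is the counting step in the second paragraph — that each boundary circle carries equally many local maxima and local minima of $F$ — which is precisely where the Rad\'o structure (no constancy on subarcs, strictness of local extrema) must be used; everything afterwards is the arithmetic indicated above.
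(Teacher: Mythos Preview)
Your proof is correct and follows essentially the same route as the paper: both derive the formula from Theorem~\ref{first-version-boundary-theorem} by the same algebraic regrouping, with the key input being $\sum_{n\ge 0}|\Vb_{2n}| = 2\beta$ via Lemma~\ref{parity-lemma} and the equality of the numbers of local maxima and local minima of $F|\partial M$. In fact you supply a careful argument for that last equality on each boundary circle, which the paper simply asserts; your identification of this as the only place the Rad\'o structure is genuinely used is accurate.
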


This way of rewriting Theorem~\ref{first-version-boundary-theorem} is very useful for the following reason.
Think of $M$ and $F|\partial M$ as given, and the function $F$ as unknown.
In many situations (such as for minimal surfaces in Theorem~\ref{minimal-surface-theorem}),
 we know that there are no interior local maxima or minima.  In that case, $Q$ is the set of local minima of $F|\partial M$, 
which we regard as known.  Thus the right hand side is known, and
the terms on the left are all positive.

Recall that interior points of valence $v>2$ are called interior saddle points of multiplicity $w(F,p):=(v/2)-1$.

\begin{definition}\label{boundary-saddle-definition}
A {\bf boundary saddle point} of a Rad\'o function $F$ is a boundary point of valence $>1$. 
The {\bf multiplicity} of a boundary saddle point is
\[
w(F,p) 
:= 
\begin{cases}
v/2  &\text{if $v$ is even}, \\
(v-1)/2  &\text{if  $v$ is odd}.
\end{cases}
\]
\end{definition}

Using this definition, Theorem~\ref{main-boundary-theorem} can be restated as follows:

\begin{theorem}\label{restated-theorem}
Under the hypotheses of Theorem~\ref{main-boundary-theorem}, 
\begin{equation}\label{general-formula-weights}
  \sum_{w>0} w(F,p) = |Q| - \chi(M).
\end{equation}
\end{theorem}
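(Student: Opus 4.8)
The plan is to deduce Theorem~\ref{restated-theorem} directly from Theorem~\ref{main-boundary-theorem} by a purely bookkeeping argument that repackages the left-hand side of~\eqref{general-formula} in terms of the multiplicities $w(F,p)$ from Definitions~\ref{saddle-definition} and~\ref{boundary-saddle-definition}. No new geometry is needed; the content has already been extracted in Theorem~\ref{first-version-boundary-theorem} and its restatement. So the proof is essentially: ``match terms.''

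First I would recall what the sum $\sum_{w>0} w(F,p)$ runs over. By Definition~\ref{saddle-definition}, an interior point contributes to it exactly when $v(F,p)\ge 4$, i.e.\ $v=2n$ with $n\ge 2$, and then $w(F,p) = v/2 - 1 = n-1$. By Definition~\ref{boundary-saddle-definition}, a boundary point contributes exactly when $v(F,p)\ge 2$; if $v=2n$ with $n\ge 1$ then $w=n$, and if $v=2n+1$ with $n\ge 1$ then $w=n$. (Boundary points of valence $1$ are regular, and boundary points of valence $0$ — which lie in $Q$ — have $w$ undefined/zero and do not appear.) Therefore
\[
  \sum_{w>0} w(F,p)
  = \sum_{n\ge 2}(n-1)\,|\Vint_{2n}|
  + \sum_{n\ge 1} n\,|\Vb_{2n}|
  + \sum_{n\ge 1} n\,|\Vb_{2n+1}|,
\]
where I am using the notation $\Vint_m$, $\Vb_m$ introduced just before Theorem~\ref{first-version-boundary-theorem}. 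This is exactly the left-hand side of~\eqref{general-formula}.

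Second I would simply invoke Theorem~\ref{main-boundary-theorem}: under its hypotheses — which are verbatim the hypotheses of Theorem~\ref{restated-theorem} — the left-hand side of~\eqref{general-formula} equals $|Q| - \chi(M)$. Combining with the display above gives~\eqref{general-formula-weights}, completing the proof.

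The only ``obstacle'' here is really a matter of care rather than difficulty: one must make sure the index ranges in Definitions~\ref{saddle-definition} and~\ref{boundary-saddle-definition} line up precisely with the sums in~\eqref{general-formula}, in particular that interior valence-$2$ points and boundary valence-$0$ and valence-$1$ points are correctly excluded (they contribute $0$ to the left side of~\eqref{general-formula} and are not counted in $\sum_{w>0}w$), and that the odd-valence boundary formula $(v-1)/2$ is correctly read off as $n$ when $v=2n+1$. Once that dictionary is checked, the identity is immediate. I would present the argument in two or three lines: state the term-matching, cite Theorem~\ref{main-boundary-theorem}, done.

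\begin{proof}
By Definition~\ref{saddle-definition}, an interior point $p$ has $w(F,p)>0$ precisely when $v(F,p)=2n$ for some $n\ge 2$, in which case $w(F,p)=n-1$. By Definition~\ref{boundary-saddle-definition}, a boundary point $p$ has $w(F,p)>0$ precisely when $v(F,p)\ge 2$; if $v(F,p)=2n$ with $n\ge 1$ then $w(F,p)=n$, and if $v(F,p)=2n+1$ with $n\ge 1$ then $w(F,p)=(v(F,p)-1)/2=n$. Hence, in the notation of Theorem~\ref{first-version-boundary-theorem},
\[
  \sum_{w>0} w(F,p)
  =
  \sum_{n\ge 2}(n-1)\,|\Vint_{2n}|
  +
  \sum_{n\ge 1} n\bigl(|\Vb_{2n}|+|\Vb_{2n+1}|\bigr),
\]
which is exactly the left-hand side of~\eqref{general-formula}. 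By Theorem~\ref{main-boundary-theorem}, that quantity equals $|Q|-\chi(M)$, which is~\eqref{general-formula-weights}.
\end{proof}
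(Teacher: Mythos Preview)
Your proposal is correct and matches the paper's approach exactly: the paper presents Theorem~\ref{restated-theorem} as a direct restatement of Theorem~\ref{main-boundary-theorem} via Definition~\ref{boundary-saddle-definition}, with no separate proof beyond the implicit term-matching you carry out. Your careful verification that the index ranges line up is precisely the bookkeeping the paper leaves to the reader.
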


Note that the left hand side is the total number of saddles, interior and boundary, counting multiplicity.

\begin{proof}[Proof of Theorem~\ref{main-boundary-theorem}]
Write
\[
  Q = \Qint + \Qb,
\]
where $\Qint:=Q\setminus \partial M$ is the set of interior local maxima and interior local minima of $F$, and 
where $\Qb:=Q\cap \partial M$ is the set of local minima of $F|\partial M$.

Note that the points of valence $0$ are the points of $Q$ together with the local maxima of $F|\partial M$. 
Since the number of local maxima of $F|\partial M$ is equal to the number $|\Qb|$ of local minima of $F|\partial M$,
we see that there are only finitely many points of valence $0$.

Recall from Theorem~\ref{first-version-boundary-theorem} that
\begin{equation}\label{eeny}
\chi(M) = \sum_k (1-k)|\Vint_{2k}|  + \frac12\sum_k (1-k) |\Vb_k|.
\end{equation}
Now
\begin{equation}\label{meeny}
\begin{aligned}
\sum_k (1-k)|\Vint_{2k}|
&=
|\Qint| + 0 - \sum_{k\ge 2} (k-1)\,|\Vint_{2k}|.
\end{aligned}
\end{equation}
Also
\begin{equation}\label{miny}
\begin{aligned}
\sum_k (1-k)|\Vb_k|
&=
\sum_n ((1-2n)|V_{2n}| + (-2n)|V_{2n+1}| ) \\
&=
\sum_n|\Vb_{2n}|  - \sum_n (2n)(|V_{2n}| + |V_{2n+1}| ). 
\end{aligned}
\end{equation}

By Lemma~\ref{parity-lemma}, 
\[
  \sum_n |\Vb_{2n}|
\]
is the number of local minima and local maxima of $F|\partial M$.
The number of local maxima of $F|\partial M$ is equal to the number of local minima of $F|\partial M$ (namely $|\Qb|$), so
\begin{equation*}
  \sum_n|\Vb_{2n}| = 2|\Qb|.
\end{equation*}
Thus we can rewrite~\eqref{miny} as
\begin{equation}\label{mo}
\frac12 \sum_k (1-k)|\Vb_k|
=
 |\Qb|  -  \sum_n n \, (|V_{2n}| + |V_{2n+1}| ).  
\end{equation}

Combining~\eqref{eeny}, \eqref{meeny}, and~\eqref{mo} gives~\eqref{general-formula}.
\end{proof}

\section{A Remark about Inequalities}

Various theorems in this paper, such as Theorem~\ref{restated-theorem},
give formulas for the total number of saddles, interior and boundary, in some region, counting multiplicity.
For many applications, simpler inequalities suffice.  

The following proposition describes how the exact formulas imply the simpler inequalities.

\begin{proposition}\label{inequality-proposition}
Suppose that $F:M\to\RR$ is a Rad\'o function and that $K$ is a region in $M$.  
Suppose also that
\[
   \sum_{K \cap \{w>0\}} w(F,p) = \Ww.
\]
Then
\begin{equation}\label{basic-inequality}
\sum_{(K\setminus \partial M)\cap\{w>0\}} w(F,p)
\le  \Ww -  |A|,
\end{equation}
where $A$ is the set of points $p$ in $K\cap \partial M$ such that $p$ is a local minimum or local maximum of $F|\partial M$
but is not a local minimum or local maximum of $F$.

Furthermore, equality holds if and only if  $K\cap \partial M$ contains no point $p$ with valence $v(F,p)>2$.
In particular, if $F$ is $C^2$, if $F|M$ is a Morse function, and if $DF$ does not vanish 
   at any point of $\partial M$,
then equality holds in~\eqref{basic-inequality}.
\end{proposition}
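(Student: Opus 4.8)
The plan is to separate off the contribution of the boundary saddles. Every point with $w(F,p)>0$ is either an interior point of valence $\ge 4$ or a boundary point of valence $\ge 2$ (see Definitions~\ref{saddle-definition} and~\ref{boundary-saddle-definition}), and these two sets are disjoint, so the hypothesis splits as
\[
\Ww=\sum_{(K\setminus\partial M)\cap\{w>0\}}w(F,p)+S_\partial,\qquad
S_\partial:=\sum_{(K\cap\partial M)\cap\{w>0\}}w(F,p).
\]
Thus~\eqref{basic-inequality} is equivalent to $S_\partial\ge|A|$, and the two are equalities simultaneously; so the proposition reduces to proving $S_\partial\ge|A|$ and identifying when equality holds.

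I would do this by inspecting boundary points one at a time using Lemma~\ref{parity-lemma}. If $p\in A$, then $p$ is a local extremum of $F|\partial M$, so $v(F,p)$ is even; and $p$ is not a local extremum of $F$, so $v(F,p)\ne 0$ (the valence-$0$ points are exactly the local maxima and minima of $F$). Hence $v(F,p)\ge 2$ and $w(F,p)=v(F,p)/2\ge 1$, and since $A$ is contained in the set of boundary saddles lying in $K$, this already gives $S_\partial\ge\sum_{p\in A}w(F,p)\ge|A|$. For the equality case, classify the remaining boundary saddles $p$ of $K$ the same way: if $v(F,p)$ is even ($\ge 2$) then $p\in A$; if $v(F,p)$ is odd it is $\ge 3$ and $w(F,p)=(v(F,p)-1)/2\ge 1$. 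Writing $B$ for the set of boundary points of $K$ of odd valence $\ge 3$, we get
\[
S_\partial=\sum_{p\in A}\frac{v(F,p)}{2}+\sum_{p\in B}\frac{v(F,p)-1}{2}\ \ge\ |A|,
\]
with equality iff every $p\in A$ has $v(F,p)=2$ and $B=\varnothing$. Finally I would check that this is the same as ``$K\cap\partial M$ has no point of valence $>2$'': that condition clearly forces $B=\varnothing$ and $v(F,p)=2$ for $p\in A$ (whose valence is even and $\ge 2$); conversely, if equality holds and some $q\in K\cap\partial M$ had $v(F,q)>2$, then $v(F,q)$ cannot be odd (as $B=\varnothing$), so it is even, forcing $q\in A$ and hence $v(F,q)=2$, a contradiction.

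It then remains to treat the $C^2$ Morse case, where I would show that the hypotheses force every boundary point to have valence $\le 2$, so the equality criterion above applies. Fix $p\in\partial M$, extend $F$ to a neighborhood in an ambient open surface, and choose coordinates with $M=\{y\ge 0\}$ near $p=0$. Since $DF(0)\ne 0$, the level set $\{F=F(0)\}$ is near $0$ a single smooth embedded arc $\gamma$ by the implicit function theorem. If $\gamma$ meets $\partial M$ transversally at $0$, then $\gamma\cap M$ is near $0$ a single half-open arc running into the interior, so $v(F,0)=1$. Otherwise $0$ is a critical point of $F|\partial M$, necessarily nondegenerate because $F|M$ is Morse; writing $\gamma$ as a graph $y=h(x)$ gives $h(0)=h'(0)=0$ and $h''(0)\ne 0$, so near $0$ the set $\{x:h(x)\ge 0\}$ is either a full interval about $0$---in which case $v(F,0)=2$---or just $\{0\}$---in which case $v(F,0)=0$. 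Either way $v(F,0)\le 2$. The main obstacle is the equality bookkeeping in the second paragraph: one must track carefully which boundary valences (even versus odd, and $0$ versus $1$ versus $\ge 2$) lie in $A$ and what multiplicity each contributes to $S_\partial$, and then match this against the valence-$>2$ criterion. The two reductions and the Morse normal-form computation are routine once Lemma~\ref{parity-lemma} is in hand.
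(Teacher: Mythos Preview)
Your proof is correct and follows essentially the same route as the paper: split $\Ww$ into interior and boundary contributions, then show the boundary contribution $S_\partial\ge|A|$ by observing (via Lemma~\ref{parity-lemma}) that the points of $A$ are exactly the boundary points of even valence $\ge 2$, each contributing $w\ge 1$. Your equality bookkeeping with the set $B$ of odd-valence boundary saddles is a bit more explicit than the paper's chain of inequalities, but the content is the same.

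One small caution in the $C^2$ case: you write that a critical point of $F|\partial M$ is ``necessarily nondegenerate because $F|M$ is Morse,'' but since $DF\ne 0$ there, such a point is \emph{not} a critical point of $F|M$, so the Morse hypothesis on $F|M$ does not directly control it (e.g.\ $F(x,y)=y-x^3$ on $\{y\ge 0\}$ has $DF\ne 0$ everywhere, yet $F|\partial M=-x^3$ is degenerate at $0$, giving $h(x)=x^3$ with $h''(0)=0$). The paper's own proof in fact invokes that $F|\partial M$ is Morse; the discrepancy with the statement as written is the paper's, not yours, but your local argument needs that hypothesis to force $h''(0)\ne 0$.
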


\begin{proof}
Note that
\[
  \sum_{K\cap \{w>0\}} w(F,p) = \sum_{(K\setminus\partial M)\cap \{w>0\}} w(F,p) + \sum_{(K\cap\partial M)\cap \{w>0\}} w(F,p),
\]
and that
\begin{align*}
\sum_{(K\cap\partial M)\cap \{w>0\}} w(F,p)  
&=
\sum_{(K\cap \partial M)\cap \{v\ge 2\}} w(F,p)  \\
&\ge
\sum_{(K\cap \partial M) \cap \{v\ge 2, \,\text{$v$ even} \}} w(F,p)  \\
&=
\sum_{(K\cap \partial M)\cap \{v\ge 2,\, \text{$v$ even} \}} \frac12 v(F,p) \\
&\ge
\sum_{(K\cap \partial M)\cap \{v\ge 2, \,\text{$v$ even} \}} 1 \\
&= |A|
\end{align*}
with equality if and only $K\cap \partial M$ has no points of valence $>2$.

This proves the proposition, except for the assertion about the case when $F$ is $C^2$.
Note that if $F$ is $C^2$ and if $F|\partial M$ is a Morse function, then at each point of $\partial M$ that is 
not a critical point of $F|\partial M$, the valence is $1$, and at each critical point of $F|\partial M$, the valence is either $0$ or $2$.
\end{proof}

Thus, for example, from Theorem~\ref{restated-theorem}, we get the following inequality:

\begin{theorem}\label{restated-inequality-theorem}
Suppose that $F:M\to \RR$ is a Rad\'o function on a compact $2$-manifold with boundary.
Let $Q$ be the set of interior local maxima and interior local minima of $F$, together with
the local minima of $F|\partial M$.
Suppose that $Q$ is a finite set.
Then
\begin{equation}\label{general-inequality}
\sum_{(M\setminus \partial M)\cap\{w>0\}} w(F,p)
\le 
|Q| - \chi(M) - |A|,
\end{equation}
where $A$ is the set of local maxima and local minima of $F|\partial M$ that are not local maxima or local minima of $F$.
Equality holds if and only if there are no boundary points $p$ of valence $v(F,p)>2$.

In particular, if $F$ is $C^2$, if $F|\partial M$ is a Morse function, and if $DF$ does not vanish at 
any point of $F|\partial M$, then equality holds.
\end{theorem}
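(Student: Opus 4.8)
The plan is to obtain Theorem~\ref{restated-inequality-theorem} as an immediate consequence of Theorem~\ref{restated-theorem} and Proposition~\ref{inequality-proposition}, specialized to the region $K=M$; no new ideas are needed beyond matching up the notation.

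First I would note that the hypotheses of Theorem~\ref{restated-inequality-theorem} are word-for-word the hypotheses of Theorem~\ref{main-boundary-theorem}, hence of the equivalent Theorem~\ref{restated-theorem}: $F$ is a Rad\'o function on a compact $2$-manifold-with-boundary, and the set $Q$ of interior local maxima and interior local minima of $F$ together with the local minima of $F|\partial M$ is finite. Therefore Theorem~\ref{restated-theorem} applies and gives
\[
  \sum_{w>0} w(F,p) = |Q| - \chi(M),
\]
the sum being over all saddle points of $F$, interior and boundary, counted with multiplicity.

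Next I would invoke Proposition~\ref{inequality-proposition} with $K=M$. The left side of the displayed hypothesis of that proposition is exactly $\sum_{w>0} w(F,p)$, so the previous step identifies $\Ww = |Q|-\chi(M)$. Since $K\setminus\partial M = M\setminus\partial M$ and $K\cap\partial M=\partial M$, the set $A$ of Proposition~\ref{inequality-proposition} --- the points of $K\cap\partial M$ that are local extrema of $F|\partial M$ but not local extrema of $F$ --- coincides with the set $A$ appearing in Theorem~\ref{restated-inequality-theorem}. Hence inequality~\eqref{basic-inequality} becomes precisely~\eqref{general-inequality}, and the equality clause of the proposition (``equality iff $K\cap\partial M$ contains no point of valence $>2$'') becomes the equality clause asserted here.

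Finally, the $C^2$ assertion is inherited verbatim: Proposition~\ref{inequality-proposition} already records that if $F$ is $C^2$, if $F|\partial M$ is a Morse function, and if $DF$ vanishes nowhere on $\partial M$, then every boundary point has valence $0$, $1$, or $2$, so the equality criterion is met. The only point requiring any care --- and it is a triviality --- is checking that the two occurrences of the symbol $A$ really denote the same set once $K=M$; there is no genuine obstacle in this argument.
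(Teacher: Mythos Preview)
Your proposal is correct and matches the paper's approach exactly: the paper presents Theorem~\ref{restated-inequality-theorem} as an immediate consequence of Theorem~\ref{restated-theorem} via Proposition~\ref{inequality-proposition} (with $K=M$), without even writing out a separate proof. Your identification of $\Ww=|Q|-\chi(M)$ and the matching of the set $A$ and the equality/$C^2$ clauses are precisely what is intended.
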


\section{Portions of Surfaces with Boundary}

\begin{theorem}\label{regular-interval-theorem}
Suppose that $F:M\to \RR$ is a Rad\'o function, that
$a < b$ are regular values of $F$, 
and that $M[a,b]$ is compact.
Then
\begin{equation}\label{regular-interval-formula}
  \sum_{M(a,b)\cap \{w>0\}} w(F,p) = |Q(a,b)| + \frac12\beta(a)  -  \chi(M(a,b)),
\end{equation}
 provided $|Q(a,b)|$ is finite,
where
\begin{enumerate}
\item $Q$ is the set consisting of the interior local maxima and the interior local minima of $F$,
together with the local minima of $F|\partial M$, 
\item $Q(a,b)= Q\cap M(a,b)$, and
\item $\beta(a)$ is the number of points in $(\partial M)\cap \{F=a\}$.
\end{enumerate}
\end{theorem}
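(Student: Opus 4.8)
The plan is to reduce to Theorem~\ref{restated-theorem} by ``capping off'' the level sets $M[a]$ and $M[b]$ to build a compact manifold-with-boundary. First I record the structure at the regular values. Since $a,b$ are regular values and $M[a,b]$ is compact, each of $\Gamma_a:=M[a]$ and $\Gamma_b:=M[b]$ is a compact $1$-manifold, i.e.\ a finite disjoint union of circles and arcs, where the two endpoints of each arc of $\Gamma_a$ are points of $(\partial M)\cap F^{-1}(a)$. Let $c_a,e_a$ be the numbers of circle and arc components of $\Gamma_a$, so $e_a=\frac12\beta(a)$, and similarly $c_b,e_b$. Note also that $M[a,b]$ is a compact $2$-manifold with corners having $\Gamma_a$ and $\Gamma_b$ among its boundary faces; applying the collar neighbourhood theorem to those faces shows that removing them is a homotopy equivalence, so $\chi(M(a,b))=\chi\big(M[a,b]\setminus(\Gamma_a\cup\Gamma_b)\big)=\chi(M[a,b])$.

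Next I build the capped surface $\hat M\supset M[a,b]$. To each circle component of $\Gamma_a$ glue a disc, and to each arc component glue a bigon (a disc, one boundary arc of which is the given arc, the other boundary arc $\gamma'$ joining the two corner points and becoming part of $\partial\hat M$). Extend $F$ over each cap by a function $\hat F$ equal to $a$ on the glued portion, strictly less than $a$ on the rest of the cap, and having exactly one critical point: an interior local minimum for a disc cap and a boundary local minimum (lying on $\gamma'$) for a bigon cap. Do the analogous construction at level $b$, with the single cap critical point now an interior (resp.\ boundary) local maximum. One such cap exists for every circle and arc component of $\Gamma_a$ and $\Gamma_b$; extending $F$ by $\hat F$ on all of them produces a continuous function $\hat F$ on the compact manifold-with-boundary $\hat M$.

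The crucial point is that $\hat F$ is a Rad\'o function on $\hat M$. Along $\Gamma_a$ the level set $\{\hat F=a\}$ is now genuinely two-sided (the cap supplies the side $\{\hat F<a\}$), so interior-to-$M$ points of $\Gamma_a$ have valence $2$; at each point of $(\partial M)\cap F^{-1}(a)$ the new arc $\gamma'$ descends below $a$ while $\partial M$ ascends above it, so these are ordinary valence-$1$ boundary points (not local extrema of $\hat F|\partial\hat M$); in the interior of $M[a,b]$ nothing has changed; and the caps are Rad\'o by construction. Since $Q(a,b)$ is finite by hypothesis and only finitely many critical points were added in the caps, the set $\hat Q$ of interior local extrema of $\hat F$ together with local minima of $\hat F|\partial\hat M$ is finite, so Theorem~\ref{restated-theorem} applies to $(\hat M,\hat F)$. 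The caps contain no saddles and the points of $\Gamma_a\cup\Gamma_b$ are non-critical, so the total saddle count for $\hat F$ over $\hat M$ equals the saddle count for $F$ over $M(a,b)$, namely the left side of~\eqref{regular-interval-formula}. Counting the newly created critical points gives $|\hat Q|=|Q(a,b)|+c_a+c_b+\frac12\beta(a)$ (the disc caps at level $b$ add interior maxima and the bigons at level $b$ add boundary maxima, none of which enter $\hat Q$). Finally, attaching a disc along a boundary circle raises $\chi$ by $1$ while attaching a bigon along a boundary arc leaves $\chi$ unchanged, so $\chi(\hat M)=\chi(M[a,b])+c_a+c_b=\chi(M(a,b))+c_a+c_b$. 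Substituting these into $\sum_{\hat M\cap\{w>0\}}w(\hat F,p)=|\hat Q|-\chi(\hat M)$, the terms $c_a$ and $c_b$ cancel and~\eqref{regular-interval-formula} follows.

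I expect the main obstacle to be the verification that $\hat F$ is a Rad\'o function on $\hat M$: one must check the local picture carefully (valences, the crossing condition~(4), and condition~(5) for boundary points) along the gluing curves $\Gamma_a,\Gamma_b$ and at the corner points $(\partial M)\cap F^{-1}(\{a,b\})$, and confirm that caps with the stated single-critical-point behaviour actually exist. The Euler-characteristic identity $\chi(M(a,b))=\chi(M[a,b])$, though standard, also deserves a short justification since $M[a,b]$ is only a manifold with corners.
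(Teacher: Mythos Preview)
Your proof is correct and follows the same strategy as the paper: modify $M[a,b]$ along the regular level sets $M[a]$ and $M[b]$ so that Theorem~\ref{restated-theorem} applies, then bookkeep. The only difference is in the implementation of that modification. You \emph{cap} each circle (respectively arc) component of $M[a]\cup M[b]$ with a disc (respectively bigon) carrying a single extremum of an extended function $\hat F$; the paper instead \emph{collapses} each such component to a single point, obtaining a quotient surface $\tilde M$ on which the induced function $\tilde F$ has a local extremum at each quotient point. The two constructions are dual and the counting is identical: your $c_a+c_b$ matches the paper's $c$, your $e_a=\tfrac12\beta(a)$ matches the paper's $n$, and both reach $|\hat Q|-\chi(\hat M)=|Q(a,b)|+\tfrac12\beta(a)-\chi(M(a,b))$ after the same cancellations. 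The paper's collapsing avoids the work you flag at the end (constructing explicit cap functions and verifying the Rad\'o axioms along the glued curves), at the modest cost of checking that the quotient is a manifold and that $\tilde F$ is Rad\'o near the quotient points; your capping makes the manifold structure transparent at the cost of that extra verification. Your parenthetical after the formula for $|\hat Q|$ is slightly misleading (the interior maxima from disc caps at level $b$ \emph{do} enter $\hat Q$, which is why $c_b$ appears), but the formula itself is right.
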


\begin{proof}
Let $\tilde M$ be obtained from $M[a,b]$  by identifying each
connected component of $M[a]$ to a a point and each connected component of $M[b]$ to a point.
Let $\tilde F$ be the function on $\tilde F$ corresponding to $F$ on $M[a,b]$.

Thus each closed curve component of $M[a]$ becomes an interior point of $\tilde M$,
and each non-closed curve component of $M[a]$ becomes a single boundary point of $\tilde M$.
In both cases, the point is a global minimum of $\tilde F$.

Likewise, each closed curve component of $M[b]$ becomes an interior point of $\tilde M$,
and each non-closed curve component of $M[b]$ becomes a single boundary point of $\tilde M$.
In both cases, the point is a global maximum of $\tilde F$.

Let $\tilde Q$ be the set of all interior local maxima and
interior local minima of $\tilde F$, together will all local minimal of $\tilde F|\partial \tilde M$.

Let $n$ be the number of non-closed-curve components of $M[a]$, and let $c$ be the number of closed curved components
of $M[a]\cup M[b]$.

Note that
\begin{align*}
\chi(\tilde M) &= \chi(M(a,b)) + c,  \\
|\tilde Q|  &= |Q(a,b)| + n + c, \,\text{and} \\
n &= \frac12 \beta(a).
\end{align*}
Thus
\[
  |\tilde Q| - \chi(\tilde M) =  |Q(a,b)| - \chi(M(a,b)) + \frac12 \beta(a).
\]
Consequently, by Theorem~\ref{restated-theorem},    
\begin{equation}\label{hansel}
\begin{aligned}
  \sum_{\tilde M\cap \{w>0\}} w(\tilde F, p)  
  &=  |\tilde Q| - \chi(\tilde M) \\
  &= |Q(a,b)| + \frac12\beta(a) - \chi(M(a,b)).
\end{aligned}
\end{equation}
The points in $\tilde M$ with $\tilde F=a$ or $\tilde F=b$ all have valence $0$, so
\begin{equation}\label{gretel}
  \sum_{\tilde M\cap \{w>0\}}w(\tilde F,p) = \sum_{M(a,b)\cap \{w>0\}}w(F,p).
\end{equation}
Combining~\eqref{hansel} and~\eqref{gretel} gives~\eqref{regular-interval-formula}.  
\end{proof}

We now relax the requirement in Theorem~\ref{regular-interval-theorem} that $a$ and $b$ are
finite, non-critical values of $F$.  We begin with a Lemma.

\begin{lemma}\label{monomorphism-lemma}
Suppose that $M$ is a surface and that $F:M\to \RR$ is a continuous function.
Suppose also that $F$ has no interior local minima with $F<a,$ and no interior local maxima with $F>b$.
 Then the inclusion of $M[a,b]$ into $M$ induces a monomorphism on $H_1(\--; \ZZ_2)$.
 
Likewise, if $F$ has no interior local minima with $F\le a,$ and no interior local maxima with $F\ge b$,
then inclusion of $M(a,b)$ into $M$ induces a monomorphism on $H_1(\--;\ZZ_2)$.
\end{lemma}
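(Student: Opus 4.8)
\emph{Proof plan.} The plan is to reduce, through the long exact sequence of the pair $(M,M[a,b])$ with $\ZZ_2$ coefficients, to a bounding statement for chains, and then to exploit the hypothesis via the fact that it forbids ``cappable'' compact pieces of $\{F<a\}$ and of $\{F>b\}$. Concretely, in the exact sequence $\cdots\to H_2(M,M[a,b];\ZZ_2)\xrightarrow{\,\partial_*\,}H_1(M[a,b];\ZZ_2)\xrightarrow{\,i_*\,}H_1(M;\ZZ_2)\to\cdots$ the map $i_*$ is injective exactly when $\partial_*=0$; since a class in $H_2(M,M[a,b];\ZZ_2)$ is represented by a singular $2$-chain $d$ in $M$ with $\partial d$ supported in $M[a,b]$ and $\partial_*[d]=[\partial d]$, this says precisely that every $1$-cycle in $M[a,b]$ bounding a $2$-chain in $M$ already bounds a $2$-chain in $M[a,b]$. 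The statement about $M(a,b)$ is treated identically with the pair $(M,M(a,b))$, the open sets $\{F>a\}$ and $\{F<b\}$ playing the role of the collars below, the strict-inequality hypotheses being exactly what that variant requires.

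The geometric input is that the hypothesis forbids compact pieces that a filling could swallow. If $W$ is a connected component of the open set $\{F<a\}$, its frontier lies in $\{F=a\}$; hence if $\overline W$ is compact and disjoint from $\partial M$, the function $F|_{\overline W}$ is continuous on a compact set, equals $a$ on the frontier, and is $<a$ on $W$, so it attains its minimum at a point of $W$ --- an interior local minimum of $F$ with value $<a$, contrary to hypothesis. The same holds when $\overline W$ meets $\partial M$ but $W\cap\partial M=\varnothing$, since the minimizer still lies in $W$; the only remaining possibility is $W\cap\partial M\neq\varnothing$. Symmetrically, no component of $\{F>b\}$ with compact closure disjoint from $\partial M$ can be swallowed, using the hypothesis on interior local maxima.

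I would then make the bounding statement combinatorial --- which is forced on us because, $F$ being merely continuous, $\{F=a\}$ and hence $M[a,b]$ need not be a submanifold, or even triangulable. Given $d$ with $z:=\partial d$ supported in $M[a,b]$ (take $a<b$; the case $a=b$, a single level set, is a minor variant), pick a triangulation of $M$ on which $F$ oscillates by less than some $\delta<b-a$ on every simplex meeting the compact set $|d|$, and write $d=d_-+d_0+d_+$, where $d_-$ collects the $2$-simplices meeting $\{F<a\}$, $d_+$ those meeting $\{F>b\}$, and $d_0$ the rest; then $|d_0|\subseteq M[a,b]$, $|d_-|\subseteq\{F<a+\delta\}$, $|d_+|\subseteq\{F>b-\delta\}$. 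Since $\partial d$, $\partial d_0$, $\partial d_+$ are all supported in $\{F\ge a\}$, the cycle $\partial d_-$ is supported in the thin collar $\{a\le F<a+\delta\}\subseteq M[a,b]$ and bounds $d_-$ ``from below'' (and symmetrically for $\partial d_+$), while $[z]=[\partial d_-]+[\partial d_+]$ in $H_1(M[a,b];\ZZ_2)$; so it suffices to fill each collar cycle inside $M[a,b]$. One does this by sliding the sub-level filling $d_-$ back across level $a$ a simplex at a time; the process can only stall once the below-$a$ part of the current filling is a union of complete components of $\{F<a\}$ with compact closure, which is impossible --- by the previous paragraph when such a component avoids $\partial M$, and because otherwise the chain boundary of the filling would contain $\partial M$-edges on which $F<a$, contradicting $z\subseteq M[a,b]$. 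Organized as an induction on the number of offending simplices, and carried out near $b$ as well, this exhibits $z$ as a boundary in $M[a,b]$.

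The step I expect to be the main obstacle is exactly this sliding construction in the merely continuous setting: since one cannot cut along the possibly wild set $\{F=a\}$ but only along the combinatorial near-level sets furnished by fine triangulations, the one-simplex-at-a-time pushdown and, especially, its termination must be organized combinatorially rather than as a deformation retraction or an honest minimization over fillings. A smaller but genuine point is the bookkeeping near $\partial M$, where the hypothesis controls only interior extrema; it is handled as above, by noting that a component of $\{F<a\}$ meeting $\partial M$ cannot be swallowed by a filling whose chain boundary stays in $M[a,b]$.
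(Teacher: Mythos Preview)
Your plan is massively more complicated than necessary, and the ``sliding'' step you flag as the main obstacle is exactly the step the paper avoids entirely. The paper's proof is five lines, and the key idea you are missing is this: on a surface, a $\ZZ_2$ $2$-chain is (represented by) a \emph{subset}, not an abstract singular chain. So if a $1$-cycle $C\subset M[a,b]$ is null-homologous in $M$, it is the mod~$2$ boundary of a compact region $K\subset M$. Now simply look at where $F|K$ attains its maximum: the maximizer is either in $C$ (so $F\le b$ there) or in $K\setminus C$, in which case it is an interior local maximum of $F$ and hence has $F\le b$ by hypothesis. Thus $K\subset\{F\le b\}$, and symmetrically $K\subset\{F\ge a\}$, so $K\subset M[a,b]$ and $C$ already bounds in $M[a,b]$. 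That is the entire argument.

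Your long-exact-sequence reduction is of course correct, and your observation about components of $\{F<a\}$ with compact closure is the right geometric content --- it is essentially the contrapositive of the max/min argument above. But once you pass to singular chains and triangulations you lose the ability to take a pointwise extremum over a well-defined set, which is why you are forced into the combinatorial ``sliding'' that you yourself identify as dubious. The fix is not to repair the sliding; it is to never leave the world of regions. With $\ZZ_2$ coefficients on a $2$-manifold, every relative $2$-class is represented by a codimension-$0$ submanifold (equivalently, a Borel subset modulo null sets), and the extremum argument applies directly. Your worry about $\partial M$ is also handled automatically in this formulation: a point of $K\setminus C$ lying in $\partial M$ would force $\partial M$-edges into $\partial K=C$, but $C$ was assumed to be a cycle in $M$.
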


\begin{proof}
We prove the first statement.
Let $C$ be a $1$-cycle in $M[a,b]$ that is homologically trivial in $H_1(M; \ZZ_2)$.
Then $C$ bounds a region $K$ in $M$.  
Let $p$ be a point where $F|K$ attains its maximum.  If $p\in C$, then $F(p)\le b$ since $C\subset M[a,b]$.
If $p\in K\setminus C$, then $p$ is an interior local maximum of $F$ and hence $F(p)\le b$.  Either way, 
$\max_KF\le b$.  Likewise, $\min_KF\ge a$.  
Hence $K$ lies in $M[a,b]$, so $C$ is homologically
trivial in $H_1(M[a,b]; \ZZ_2)$.
\end{proof}

 For the next two theorems, we make the following hypotheses:
\begin{enumerate}[\upshape (h1)]
\item\label{h1} $F:M\to\RR$ is a Rad\'o function and $-\infty\le a < b\le \infty$.
\item\label{h2} $d_1(M):=\dim H_1(M;\ZZ_2)$ is finite.
\item\label{h3} 
  The set $Q$  is finite, where $Q$ consists of the interior local minima and maxima of $F$ together with
the local minima of $F|\partial M$.
\end{enumerate}

\begin{theorem}\label{semiregular-interval-theorem}
Under the hypotheses~(h\ref{h1})--(h\ref{h3}), if $M[a,t]$ is compact for all $a\le t<b$ and if $a$ is a regular value of $F$, then
\[
  \sum_{M(a,b)\cap \{w>0\}} w(F,p) =  |Q(a,b)| + \frac12\beta(a)  -  \chi(M(a,b)),
\]
where  $\beta(t)$ is the number of points in $(\partial M)\cap \{F=t\}$.
\end{theorem}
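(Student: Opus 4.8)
The plan is to reduce the (possibly non-compact, possibly $b=\infty$) statement to the compact, regular-interval case already established in Theorem~\ref{regular-interval-theorem} by an exhaustion argument in $t\nearrow b$. First I would choose an increasing sequence $t_n\to b$ with each $t_n$ a regular value of $F$ (possible since the critical values contributing to the left side are discrete: by (h\ref{h3}) and Theorem~\ref{restated-theorem}-type counting, only finitely many saddles lie in $M(a,b)$ once we know the relevant regions are compact, so their $F$-values are finite in number, and all local extrema have $F$-value in the finite set $F(Q)$). For each such $n$, since $a$ and $t_n$ are regular values and $M[a,t_n]$ is compact by hypothesis, Theorem~\ref{regular-interval-theorem} applies on the slab $M[a,t_n]$ and gives
\[
\sum_{M(a,t_n)\cap\{w>0\}} w(F,p) = |Q(a,t_n)| + \tfrac12\beta(a) - \chi(M(a,t_n)).
\]

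Next I would pass to the limit $n\to\infty$ term by term. The term $\tfrac12\beta(a)$ is constant. For $|Q(a,t_n)|$: the sets $Q(a,t_n)$ increase to $Q(a,b)$, which is finite by (h\ref{h3}), so $|Q(a,t_n)|\to|Q(a,b)|$ and is eventually constant. For the saddle sum: the left sides are nondecreasing in $n$ and I must show they stabilize at the value $\sum_{M(a,b)\cap\{w>0\}}w(F,p)$ and that this is finite — this follows once the Euler characteristics are controlled, since the right side is then bounded. For $\chi(M(a,t_n))$: here I would use Lemma~\ref{monomorphism-lemma}. Since $F$ has no interior local maxima with $F>b\ge t_n$ (it has none with $F>b$ by the structure of $Q$, and $a$ being regular handles the lower end), inclusion $M(a,t_n)\hookrightarrow M$ and $M(a,b)\hookrightarrow M$ induce monomorphisms on $H_1(\,\cdot\,;\ZZ_2)$, so $d_1(M(a,t_n))\le d_1(M)<\infty$ by (h\ref{h2}); combined with finiteness of $Q$ this bounds $\chi(M(a,t_n))$ below, and the exhaustion $M(a,t_n)\nearrow M(a,b)$ together with the fact that $M(a,t_n)$ is obtained from $M(a,t_{n+1})$ by removing a product region $M[t_n,t_{n+1}]\cong M(t_n)\times[t_n,t_{n+1}]$ across a regular slab (Lemma~\ref{noncritical-lemma}, once $n$ is large enough that no saddles remain above $t_n$) shows $\chi(M(a,t_n))$ is eventually constant equal to $\chi(M(a,b))$. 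Taking the limit in the displayed identity yields the claimed formula.

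The main obstacle, and the step I would spend the most care on, is establishing that the "tail" $M(a,b)\setminus M(a,t_n)$ eventually contributes nothing — i.e. that for large $n$ there are no saddles of $F$ in $M(t_n,b)$ and that the inclusion $M(a,t_n)\hookrightarrow M(a,b)$ is eventually a homotopy equivalence. The saddle-freeness of the tail is not immediate from (h\ref{h3}) alone, since (h\ref{h3}) only controls local extrema, not saddles; I would derive it by noting that on each compact slab $M[a,t_n]$ the total saddle multiplicity is bounded by $|Q(a,t_n)|+\tfrac12\beta(a)-\chi(M(a,t_n))\le|Q|+\tfrac12\beta(a)-\chi(M(a,t_n))$, and then using the lower bound on $\chi(M(a,t_n))$ coming from Lemma~\ref{monomorphism-lemma} and finiteness of $Q$ to get a uniform bound on the total saddle multiplicity in $M(a,b)$; hence only finitely many saddles occur, all below some $t_{n_0}$, and for $n\ge n_0$ the slab $M[t_n,b)$ is critical-point-free, so Lemma~\ref{noncritical-lemma} identifies it with a product and the inclusion is a deformation retract. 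Once this uniform bound is in hand, every term in the limit is eventually constant and the identity transfers verbatim to the limit.
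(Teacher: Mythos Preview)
Your approach is essentially the paper's: apply Theorem~\ref{regular-interval-theorem} on slabs $M[a,t]$ with $t$ a regular value, use Lemma~\ref{monomorphism-lemma} to bound $d_1(M(a,t))$ uniformly and hence bound the saddle sum, conclude finitely many saddles, pick $b'$ large enough that the tail $M[b',b)$ is critical-point-free, and invoke Lemma~\ref{noncritical-lemma} to transfer the identity from $M(a,b')$ to $M(a,b)$.

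Two technical slips to fix. First, your justification for the existence of regular values $t_n\to b$ is circular (you invoke finiteness of saddles before you have it); the paper instead uses the purely topological fact that a Rad\'o function has only countably many critical values (Corollary~\ref{n-ad-corollary}), so regular values are dense. Second, your direct application of Lemma~\ref{monomorphism-lemma} to get $d_1(M(a,t_n))\le d_1(M)$ is not quite right: nothing in (h\ref{h1})--(h\ref{h3}) forbids interior local maxima with $F\ge t_n$ or interior local minima with $F\le a$ (your parenthetical ``it has none with $F>b$ by the structure of $Q$'' is false---$Q$ merely collects them, it does not exclude them). The paper repairs this by first deleting the finite set $Z^*$ of interior extrema with values outside $(a,t)$ and applying the lemma to $M(a,t)\hookrightarrow M\setminus Z^*$, yielding the slightly weaker but sufficient bound $d_1(M(a,t))\le d_1(M)+|Q|$. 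With these two corrections your argument goes through and matches the paper's.
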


\begin{proof}
Let $Z$ be the set of interior local maxima and minima of $F$ and let $Z^*(a,b)=Z\cap \{F\notin (a,b)\}$.
Note that $Z\subset Q$ and that
\[
   M(s,t) \subset M\setminus Z^*(s,t)
\]
 induces a monomorphism of first homology (see Lemma~\ref{monomorphism-lemma}), so
\begin{align*}
 d_1(M(s,t))
 &\le d_1(M\setminus Z^*(s,t)) \\
 &= d_1(M)+|Z^*(s,t)|, \\
 &\le d_1(M) + |Q|.
\end{align*}
Therefore,
\[
 -\chi(M(s,t)) \le d_1(M(s,t)) \le d_1(M)+|Q|.
\]
If $t\in (a,b)$ is a regular value of $F$, then by Theorem~\ref{regular-interval-theorem},
\begin{align*}
\sum_{p\in M(a,t), \, w>0} w(F,p)
&=
|Q(a,t)| + \frac12 \beta(a) - \chi(M(a,t)) \\
&\le
 \frac12\beta(a) +  d_1(M) +|Q|.
\end{align*}
Note that this final expression is indepent of $t$.  
By elementary topology (see Corollary~\ref{n-ad-corollary}), there are at most countably many
critical points and hence at most countably many critical values.
Thus (letting $t\to b$ among regular values $t$),
\[
 \sum_{p\in M(a,b), \, w>0} w(F,p) \le \frac12\beta(a) + d_1(M) + |Q| < \infty.
\]
Hence the set $S:=\{p\in M(a,b): w(F,p)>0\}$ is finite.
The set $Q(a,b)$ is also finite, so we can choose a regular value $b'$ of $F$ in $(a,b)$
such that
\[
  S \cup Q(a,b) \subset M(a,b').
\]
Now $S\cup Q(a,b)$ contains all the critical points of $F|M(a,b)$. Thus there are no critical points in $M[b',b)$.
Consequently, $M(a,b)$ is homotopy equivalent to $M(a,b')$ (see Lemma~\ref{noncritical-lemma}), so
\[
   \chi(M(a,b')) = \chi(M(a,b)).
\]

By Theorem~\ref{regular-interval-theorem},
\begin{align*}
\sum_{p\in M(a,b),\, w>0} w(F,p) 
&=
\sum_{p\in M(a,b'),\, w>0} w(F,p) \\
&=
|Q(a,b')| +\frac12\beta(a) - \chi(M(a,b')) \\
&=
|Q(a,b)| + \frac12\beta(a) - \chi(M(a,b)).
\end{align*}
\end{proof}

\begin{theorem}\label{general-interval-theorem}
Under the hypotheses~(h\ref{h1})--(h\ref{h3}),
if $M[s,t]$ is compact for all $a<s<t<b$, and if the limit
\[
  \beta(a+)=\lim_{t\to a, \, t>a} \beta(t)
\]
exists and is finite, then
\[
  \sum_{M(a,b)\cap \{w>0\}} w(F,p) =  |Q(a,b)| + \frac12\beta(a+)  -  \chi(M(a,b)).
\]
\end{theorem}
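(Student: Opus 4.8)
The plan is to reduce Theorem~\ref{general-interval-theorem} to Theorem~\ref{semiregular-interval-theorem} by "shifting the bottom slightly up" to a regular value. The point of the hypothesis that $\beta(a+)$ exists and is finite is precisely to guarantee that the error introduced by this shift goes to zero. First I would note that $(\partial M)\cap \{F=t\}$ is discrete for each $t$ (as observed after Definition~\ref{rado-definition}), and that the compactness of $M[s,t]$ for $a<s<t<b$ together with $d_1(M)<\infty$ and $|Q|<\infty$ lets us argue, exactly as in the proof of Theorem~\ref{semiregular-interval-theorem}, that the total saddle count $\sum_{M(a,b)\cap\{w>0\}}w(F,p)$ is finite; indeed for any regular value $a'\in(a,b)$ one has, by Theorem~\ref{semiregular-interval-theorem} applied on $(a',b)$, the uniform bound $\sum_{p\in M(a',b),\,w>0}w(F,p)\le \tfrac12\beta(a')+d_1(M)+|Q|$, and $\beta(a')$ stays bounded as $a'\to a+$ since $\beta(a+)$ exists and is finite. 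Combined with the fact that there are at most countably many critical values (Corollary~\ref{n-ad-corollary}), this shows the set $S=\{p\in M(a,b):w(F,p)>0\}$ is finite.

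Next I would choose, using the existence of $\beta(a+)$ together with the discreteness of level-sets on $\partial M$, a regular value $a'$ of $F$ close to $a$ with $\beta(a')=\beta(a+)$, and such that $a'$ lies below all the (finitely many) elements of $S$ and of $Q(a,b)$; the latter is possible because $S\cup Q(a,b)$ is finite, so its $F$-values have a minimum strictly greater than $a$. (To see that one can arrange $\beta(a')=\beta(a+)$: since $\beta$ is integer-valued and $\beta(t)\to\beta(a+)$ as $t\to a+$, we have $\beta(t)=\beta(a+)$ for all $t$ in some interval $(a,a'']$; within that interval, regular values are dense, so pick $a'\in(a,a'')$ a regular value avoiding the finitely many bad values.) Then Theorem~\ref{semiregular-interval-theorem} applies on the interval $(a',b)$ — its hypotheses hold since $M[a',t]=M[a',t]$ is compact for $a'\le t<b$ (it is a closed subset of the compact $M[a',t]$ ... more precisely $M[a',t]$ is compact because $a'>a$, so $M[s,t]$-type compactness gives it after noting $M[a',t]\subset M[s,t]$ for $a<s<a'$, hmm — actually $M[a',t]$ is itself of the form $M[\text{pt},\text{pt}]$ with $a<a'<t<b$, hence compact) — yielding
\begin{equation*}
\sum_{p\in M(a',b),\,w>0} w(F,p) = |Q(a',b)| + \tfrac12\beta(a') - \chi(M(a',b)).
\end{equation*}

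It remains to compare the terms on $(a',b)$ with those on $(a,b)$. Since $a'$ was chosen below all points of $S\cup Q(a,b)$, we have $S\subset M(a',b)$ (so the left-hand sides of the two formulas agree) and $Q(a',b)=Q(a,b)$. For the Euler characteristic: $S\cup Q(a,b)$ contains all critical points of $F$ in $M(a,b)$, and all of these lie in $M(a',b)$, hence $M(a,a']$ contains no critical points, so by Lemma~\ref{noncritical-lemma} the region $M(a,a')$ is homeomorphic to $M[a']\times(a,a')$ and $M(a,b)$ deformation retracts onto $M(a',b)$; therefore $\chi(M(a,b))=\chi(M(a',b))$. Finally $\beta(a')=\beta(a+)$ by construction. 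Substituting these three equalities into the displayed formula gives exactly
\begin{equation*}
\sum_{M(a,b)\cap\{w>0\}} w(F,p) = |Q(a,b)| + \tfrac12\beta(a+) - \chi(M(a,b)),
\end{equation*}
as desired. The main obstacle I anticipate is the bookkeeping needed to justify that $M(a,b)$ deformation retracts onto $M(a',b)$ (i.e. a clean application of Lemma~\ref{noncritical-lemma} across the half-open slab $M(a,a']$, which requires knowing $M[a'',a']$ is compact for $a<a''<a'$ — this follows from the hypothesis on $M[s,t]$) and the verification that one may simultaneously make $a'$ regular, make $\beta(a')=\beta(a+)$, and push $a'$ below all critical and $Q$-values; each piece is routine but they must be combined carefully.
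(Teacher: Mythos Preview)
Your proposal is correct and follows essentially the same route as the paper: first use Theorem~\ref{semiregular-interval-theorem} on slabs $(a',b)$ with $a'$ a regular value near $a$ to deduce that the set $S$ of saddles in $M(a,b)$ is finite, then choose a regular value $a'$ with $\beta(a')=\beta(a+)$ lying below $S\cup Q(a,b)$, apply Theorem~\ref{semiregular-interval-theorem} on $(a',b)$, and finally use Lemma~\ref{noncritical-lemma} to identify $\chi(M(a',b))$ with $\chi(M(a,b))$. The paper's proof is organized in exactly this way.
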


\begin{proof}[Proof of Theorem~\ref{general-interval-theorem}]
Since $\beta(t)$ is integer-valued, there is an $a'\in (a,b)$ such that
\[
  \beta(t)= \beta(a+)   \quad \text{for $a< t \le a'$}.
\]

Now let $s\in (a,a']$ be a regular value of $F$.  Then (by Theorem~\ref{semiregular-interval-theorem})
\begin{align*}
\sum_{M(s,b)\cap \{w>0\}} w(F,p) 
&=
|Q(s,b)| + \frac12 \beta(t) - \chi(M) \\
&\le
|Q(a,b)| + \frac12\beta(a+) - d_1(M).
\end{align*}
Since this last expression is finite and independent of $s$, letting $s\to a$ 
(among regular values $s$) gives
\[
  \sum_{p\in M(a,b),\, w>0} w(F,p) <\infty.
\]
Thus the set $S$ of points in $M(a,b)$ where $w>0$ is finite.
The set $Q(a,b)$ is also finite.
By replacing $a'$ by smaller noncritical value in $(a,b)$, we can assume that
\[
  S\cup Q(a,b) \subset M(a',b).
\]
Now $S\cup Q(a,b)$ contains all the critical points of $F$ in $M(a,b)$.  
Thus there are no critical points in $M(a,a']$, so $M(a,b)$ is homotopy equivalent to $M(a',b)$ (by Lemma~\ref{noncritical-lemma}).
Consequently,
\[
  \chi(M(a,b)) = \chi(M(a',b)).
\]
Thus
\begin{align*}
\sum_{p\in M(a,b),\, w>0} w(F,p)
&=
\sum_{p\in M(a',b),\,w>0} w(F,p) \\
&=
|Q(a',b)| + \frac12\beta(a') - \chi(M(a',b)) \\
&=
|Q(a,b)| + \frac12 \beta(a+) - \chi(M(a,b)).
\end{align*}
\end{proof}

\begin{corollary}\label{general-interval-corollary}
In Theorem~\ref{general-interval-theorem}, if $F:M\to(a,b)$ is proper, then
\[
  \sum_{M\cap \{w>0\}} w(F,p) =  |Q| + \frac12\beta(a+)  -  \chi(M).
\]
\end{corollary}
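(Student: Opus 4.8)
The plan is to deduce this from Theorem~\ref{general-interval-theorem} by showing that properness of $F:M\to(a,b)$ forces the ``$\beta(b-)$ side'' of the count to contribute nothing, so that the formula reduces to the stated one with only $\beta(a+)$ appearing. More precisely, I would first observe that properness of $F:M\to(a,b)$ immediately gives two things: (i) $M[s,t]$ is compact for all $a<s<t<b$, since $[s,t]$ is compact in $(a,b)$, so the compactness hypothesis of Theorem~\ref{general-interval-theorem} holds; and (ii) as $t\to b$ from below, $M[t,t]$ is compact but eventually $M[t,b)$ must ``escape to the end'' $b$, so the number $\beta(t)=|(\partial M)\cap\{F=t\}|$ of boundary points at level $t$ tends to $0$ for $t$ close enough to $b$. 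Indeed, since $F$ is proper, $(\partial M)\cap F^{-1}([b',b))$ is a properly embedded (possibly empty) collection of arcs and circles going out to the end; for $t$ near $b$ there are no boundary points above level $t$ only if $\partial M$ is bounded in the $F$-direction — more carefully, one argues that $\beta(b-):=\lim_{t\to b}\beta(t)$ exists and equals $0$ because $F|\partial M$ is also proper into $(a,b)$ and the local-minima set is finite, so for $t$ near $b$ every component of $(\partial M)\cap\{F\ge t\}$ is a half-open arc on which $F$ is monotone, contributing... actually the cleanest route is: $\beta(b-)=0$ is forced precisely when $M$ is obtained as the ``compact'' case with the $b$-end capped, and properness means there is no boundary at the $b$-end at all in the limit.

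Rather than belabor that, the more robust plan is this: apply Theorem~\ref{general-interval-theorem} on the interval $(a,b)$ directly. Its hypotheses (h\ref{h1})--(h\ref{h3}) are assumed, $M[s,t]$ is compact for $a<s<t<b$ by properness, and $\beta(a+)$ exists and is finite by hypothesis (carried over from Theorem~\ref{noncompact-intro-theorem}). So Theorem~\ref{general-interval-theorem} gives
\[
\sum_{M(a,b)\cap\{w>0\}} w(F,p) = |Q(a,b)| + \tfrac12\beta(a+) - \chi(M(a,b)).
\]
It then remains only to identify $M(a,b)$ with $M$. Since $F:M\to(a,b)$ is proper and its image lies in the open interval $(a,b)$, we have $M=F^{-1}((a,b))=M(a,b)$ as sets, hence $\chi(M(a,b))=\chi(M)$, $Q(a,b)=Q$ (every point of $Q$ has $F$-value in $(a,b)$), and the sum over $M(a,b)$ is the sum over $M$. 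Substituting these three equalities yields exactly
\[
\sum_{M\cap\{w>0\}} w(F,p) = |Q| + \tfrac12\beta(a+) - \chi(M),
\]
as claimed.

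The only genuine content — and the step I would be most careful about — is confirming that $M=M(a,b)$ in the relevant sense: that is, that $F$ taking values in the \emph{open} interval $(a,b)$ is built into the statement (it is: ``$F:M\to(a,b)$''), so there is nothing at the closed endpoints to worry about. Everything else is bookkeeping: checking that the hypotheses of Theorem~\ref{general-interval-theorem} are met, which is immediate from properness plus the standing hypotheses (h\ref{h1})--(h\ref{h3}) and the assumed existence of $\beta(a+)$. I expect no real obstacle; the corollary is essentially a restatement of Theorem~\ref{general-interval-theorem} once one notes that a proper map into $(a,b)$ has domain equal to the full preimage $M(a,b)$.
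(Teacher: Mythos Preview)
Your ``more robust plan'' is correct and is exactly the paper's (implicit) approach: properness of $F:M\to(a,b)$ gives compactness of $M[s,t]$ for $a<s<t<b$, and since $F$ takes values in the open interval $(a,b)$ we have $M(a,b)=M$, $Q(a,b)=Q$, and $\chi(M(a,b))=\chi(M)$, so Theorem~\ref{general-interval-theorem} specializes immediately to the stated formula. Your opening paragraph about a ``$\beta(b-)$ side'' is a red herring and should be deleted: Theorem~\ref{general-interval-theorem} is deliberately asymmetric in $a$ and $b$ (only $\beta(a+)$ appears, because $Q$ counts local \emph{minima} of $F|\partial M$), so there is no $\beta(b-)$ term to dispose of.
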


\newcommand{\Reg}{\operatorname{Reg}}
\newcommand{\Hopf}{\operatorname{Hopf}}

\section{Tame Rad\'o Functions}\label{tame-section}

\begin{definition}\label{tame-definition}
Suppose that $F : M \to \RR$ is a Rad\'o function. 
If $p$ is an interior regular point of $F$, we let $\Tan(F, p)$ be the tangent line to $\{F = F (p)\}$ at $p$, if the tangent
line exists.
We say that $F$ is {\bf tame} provided: 
\begin{enumerate}[\upshape(1)]
\item The set of interior regular points (i.e., the set of interior points of valence $2$) is open, 
\item $\Tan(F,p)$ exists at each interior regular point, and
 $\Tan(F, \cdot)$ is a continuous function on the
 set of interior regular points.
 \end{enumerate}
\end{definition}

By Theorem~\ref{minimal-tame-theorem}, the Rad\'o functions that arise in minimal surface theory are tame.

\begin{theorem}\label{discrete-theorem}
Suppose that $F:M\to\RR$ is a tame Rad\'o function such that the set $\Qint$ of interior local minima and interior local maxima 
is closed and discrete.  Then the set of interior critical points is closed and discrete.  
In other words, each interior point $p$ has a neighborhood $U$ such that $U\setminus\{p\}$ contains no critical points.
\end{theorem}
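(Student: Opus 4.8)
The plan is to reduce the statement to the assertion that every interior critical point is isolated, and then to obtain that from a local application of Theorem~\ref{restated-theorem}. First observe that, since $F$ is tame, the set of interior regular points is open, so the set of interior critical points is already closed in $\operatorname{int} M$; hence it suffices to prove the final sentence of the theorem, namely that every interior point $p$ has a neighborhood $U$ with $U\setminus\{p\}$ free of critical points. If $p$ is itself a regular point this is immediate from tameness, so we may assume $p$ is critical, and set $c:=F(p)$.

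Next I would produce a compact topological disk $D\subset\operatorname{int} M$ with $p\in\operatorname{int} D$ such that: (i) $D\cap\Qint\subseteq\{p\}$; (ii) $\partial D$ contains no critical point of $F$; (iii) $F|D$ is a Rad\'o function on the manifold-with-boundary $D$; and (iv) $F|\partial D$ has only finitely many local minima. Given such a $D$, Theorem~\ref{restated-theorem} applies to $(D,F|D)$: the relevant set $Q$ --- the interior local extrema of $F|D$ together with the local minima of $F|\partial D$ --- is finite, since by (i) its interior part lies in $\{p\}$ and by (iv) its boundary part is finite, whence
\[
  \sum_{q\in D,\ w(F|D,q)>0} w(F|D,q)\;=\;|Q|-\chi(D)\;=\;|Q|-1\;<\;\infty.
\]
Since every point of $\operatorname{int} D$ has an $M$-neighborhood contained in $D$, its valence relative to $F|D$ equals its valence relative to $F$; combined with (i) and (ii) this shows that $D$ contains only finitely many critical points of $F$, so shrinking $D$ to a disk $U\ni p$ that misses the finitely many others completes the argument. (This also re-proves that the interior critical set is closed: if $q$ is an interior point that is a limit of critical points, the neighborhood $U$ just produced for $q$ forces $q$ itself to be critical.)

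The hard part will be the construction of $D$, specifically arranging (iii) and (iv), and this is where tameness enters. Since there are only countably many critical points of $F$ (Corollary~\ref{n-ad-corollary}) and $\Qint$ is closed and discrete, I would first pass to a small coordinate disk about $p$ with closure in $\operatorname{int} M$ meeting $\Qint$ only at $p$. Then, using that $F$ is tame --- so near each interior regular point the level set of $F$ is an embedded arc with tangent line $\Tan(F,\cdot)$ depending continuously on the point --- I would take $\partial D$ to be a Jordan curve around $p$ that avoids the countable set of critical points other than $p$, is transverse to the level arcs of $F$ at all but finitely many points, and at those finitely many points touches a level arc from one side. Transversality away from the exceptional points gives property~(5) and condition~\eqref{crossing-item} of Definition~\ref{rado-definition} for $F|D$ (each level arc meeting $\partial D$ enters $\operatorname{int} D$), hence (iii), and forces $F|\partial D$ to be locally strictly monotone off the finitely many tangency points, hence (iv). The genuinely delicate point --- and the main obstacle --- is that the level ``foliation'' of $F$ is only continuous, so producing a Jordan curve with only finitely many one-sided tangencies to it takes care; it is precisely here that the tameness hypothesis, rather than mere continuity of $F$, is indispensable, since without it one could not exclude, say, a boundary curve along which $F$ oscillates infinitely often.
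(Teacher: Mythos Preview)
Your reduction to showing that each interior critical point is isolated is correct, and the idea of localizing to a disk and applying Theorem~\ref{restated-theorem} is natural; in fact the paper's treatment of the valence-$0$ case does essentially this, collapsing a level curve to a point to obtain a sphere and invoking Corollary~\ref{maxwell-corollary}.

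The genuine gap is in the saddle case, precisely where you locate it: the construction of a Jordan curve $\partial D$ transverse to the line field $\Tan(F,\cdot)$ except at finitely many one-sided tangencies. You assert that this is where tameness enters, but tameness only gives \emph{continuity} of $\Tan(F,\cdot)$, and for a merely continuous line field there is no Sard-type or perturbation argument guaranteeing such a curve. A given smooth Jordan curve may be tangent to a continuous line field on an infinite (even perfect) set, and nothing you have written rules this out; equivalently, there is no reason $F|\partial D$ should have only finitely many local extrema for any particular choice of $\partial D$. So property~(iv), and with it~(iii) at the tangency points, remains unjustified, and your appeal to Theorem~\ref{restated-theorem} cannot get off the ground.

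The paper sidesteps this difficulty by a much more economical use of tameness. After straightening so that the critical level $\{F=F(p)\}$ meets a fixed circle in $2k$ radial arcs, continuity of $\Tan(F,\cdot)$ is invoked \emph{only near those $2k$ intersection points} to force transversality there; this yields strict monotonicity of $F$ along $2k$ short boundary arcs. For $t$ sufficiently close to $F(p)$, the level set $\{F=t\}$ then meets the circle in exactly two points per sector, and a one-line Rad\'o tree argument (no closed loops by simple connectivity, no interior leaves since there are no local extrema, hence exactly two leaves) shows that $\{F=t\}$ in each sector is a single arc of regular points. Thus no globally transverse Jordan curve is ever needed: tameness is used only at finitely many prescribed points, where it comes for free.
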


\begin{proof}
Let $p$ be an interior critical point.  Thus $v(F,p)$ is an even number $\ne 2$.

{\bf Case 1}: $v(F,p)=0$.  Then $p$ is a local maximum or local minimum.
We may assume that it is a local minimum.
Let $K$ be a compact set such that $p$ is the interior of $K$, such that $\displaystyle \min_{\partial K}F>F(p)$,
and such that $K\setminus\{p\}$ contains no local minima or local maxima of $F$.
Choose $t$ with
\[
   F(p) < t < \min_{\partial K} F.
\]
Let $D:= K\cap \{F\le t\}$.
Then $D$ is a disk, so if we identify $\partial D$ to a point, we get a topological sphere $\Sigma$
on which $F$ is a well-defined Rad\'o function.  Note that $F|\Sigma$ has exactly one local maximum
and one local minimum.  Thus by Corollary~\ref{maxwell-corollary}, $F|\Sigma$ has no saddle points.  Therefore $F$ has
 no critical points on $D\setminus \{p\}$. 
This completes the proof in Case 1.

{\bf Case 2}: $v(F,p)=2k>0$.

We may assume that $F(p)=0$. 
Since the result is local, we can assume that $M$ is a disk, that $M$ has no interior local maxima or local minima,
and that 
\[
  \{F=0\}) \setminus \{p\}
\]
consists of $2k$ disjoint, embedded $C_1$ curves, each joining $p$ to a point in $\partial M$ (see Fig. 2.)
\begin{figure}[htbp]
\begin{center}
\includegraphics[height=.25\textheight]{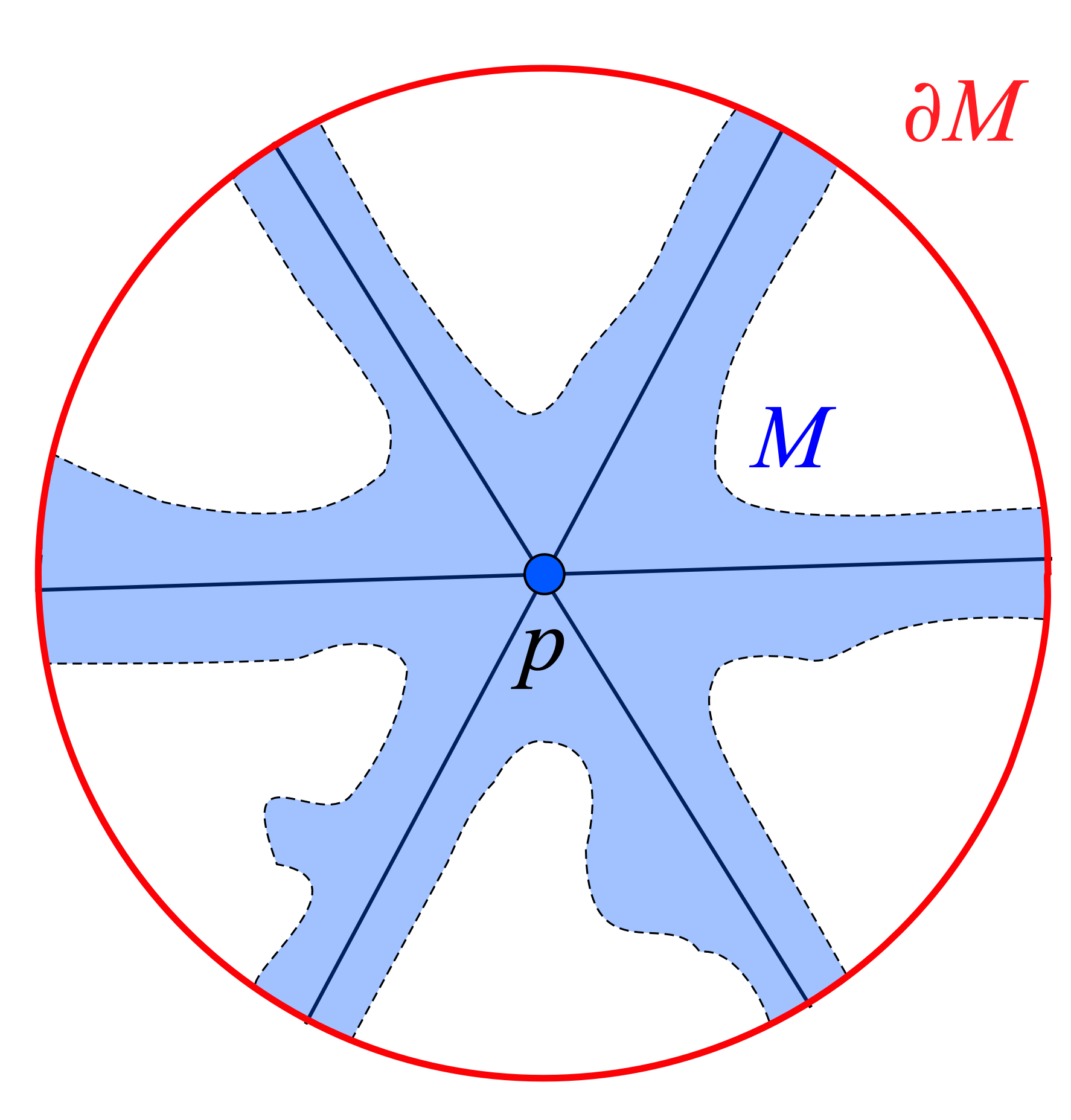}
\end{center}
\caption{Since  Case 2 is local,  we can assume that $M$ is a disk, that $M$ has no interior local maxima or local minima,
and that $  \{F=0\}) \setminus \{p\}$ consists of $2k$ disjoint, embedded $C_1$ curves, each joining $p$ to a point in $\partial M$ } \label{fig:2}
\end{figure}

By applying a homeomorphism from $M$ into $\RR^2$ that is $C^1$ in $M\setminus \{p\}$, we can assume that
\begin{gather*}
   M = \{q\in \RR^2: |q| < 2\}, \\
   p=0,
\end{gather*} 
and that 
\[
   F(r\cos \theta, r\sin\theta)=0 \iff \text{$\theta$ is an integral multiple of $\pi/k$}.
\]

Let $\Delta$ be the region given in polar coordinates by $0<r\le 1$ and $0< \theta < \pi/k$.
Note that $\Delta$ is one of the components of 
\[
  \BB(0,1) \setminus \{F=0\}.
\]
We may assume that $F>0$ on $\Delta$.
By tameness, the unit circle 
is tranverse to $\Tan(F,\cdot)$ near the points $(\cos(j\pi/k), \sin(j\pi/k))$.

In particular, we can choose $\eps>0$ so that the unit circle is transverse
to $\Tan(F,\cdot)$ at $(\cos\theta, \sin \theta)$ for $\theta \in [0,\eps]$ and for $\theta \in [(\pi/k)-\eps, (\pi/k)]$.
Thus $F(\cos\theta,\sin\theta)$ is strictly increasing for $0\le \theta \le \eps$
and is strictly decreasing for $(\pi/k)-\eps\le \theta \le \pi/k$.

Let
\[
   \eta = \min \{ F(\cos\theta, \sin\theta): \eps \le \theta \le (\pi/k)  - \eps\}.
\]
Hence for $0< t < \eta$, there are exactly
two points $q$ and $q'$ in $\partial D$ at which $F=t$.

\begin{claim}\label{easiest-Rad\'o-claim}
 $\Gamma(t):=\Delta \cap \{F=t\}$ consists of a curve of non-critical  points joining $q$ to $q'$.
\end{claim}

\begin{proof}[Proof of Claim~\ref{easiest-Rad\'o-claim}]
This is the well-known argument of Rad\'o.
  (See Theorem~\ref{slice-theorem} for a very general form of Claim~\ref{easiest-Rad\'o-claim}.)
We know that $\Gamma(t)$ is a network.  Since it is contained in a compact subset of the interior of $M$,
it is a finite network.  It cannot contain a closed curve, since if it did, that closed curve would bound a disk in $M$
(since $M$ is simply connected), and $F$ on that disk would its mininum and/or its maximum at an interior point,
which is impossible since we are assuming that there are no interior local maxima or minima.
  Thus $\Gamma(t)$
is a tree.  Since the tree has at most two points of valence $\le 1$ (namely $q$ and $q'$), it is a curve joining $q$ to $q'$.
\end{proof}

We have shown: for small enough $\eta$, there are no critical points in $\overline{D}$ with
$F<\eta$.  The same argument in the other components of $M\setminus \{F=0\}$ shows that
there is an $\eta'>0$ such there are no critical points in $\overline{D}$ with $0<|F|<\eta'$.
Also, all the points on $\{F=0\}\setminus \{p\}$ are regular.  
\end{proof}

\begin{remark}\label{untamed-example}
Without the hypothesis of tameness, Theorem~\ref{discrete-theorem} is false.
Consider the harmonic function $h(x,y)=y - (\cosh x)(\sin y)$, and let 
\[
  F(x,y)
  =
  \begin{cases}
  h(x,y^{-1})^{-1}  &\text{if $y\ne 0$, and}\\
  0 &\text{if $y=0$.}
  \end{cases}
\]
Note that the class of Rad\'o functions is closed under composition with homeomorphisms of the domain
and of $\RR$.  Since $h$ is harmonic, it is Rad\'o, and thus $F$ is Rad\'o on $\RR^2\setminus\{y=0\}$.  We leave
it to the reader to check that $F$ is Rad\'o on all of $\RR^2$.
Thus $F$ is a proper, Rad\'o function on $[-1,1]\times\RR$.  
The interior critical points of $F$ are the points $(0, (2\pi n)^{-1})$ where $n$ is an integer.
Thus the non-critical point $(0,0)$ is a limit of critical points.
\end{remark}

\begin{theorem}\label{hopf-theorem}
Suppose that $F:M\to\RR$ is a tame Rad\'o function and that $p$ is an interior point.
Then 
\[
   \Hopf(F,p) = \left( 1 - \frac{v(F,p)}2 \right) = - w(F,p),
\]
where $\Hopf(F,p)$ is the Hopf index of $\Tan(F,\cdot)$ at $p$.
\end{theorem}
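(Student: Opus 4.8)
The plan is to reduce the computation to an explicit model. The statement is local, so I first shrink $M$ to a small disk about $p$ on which $p$ is the only non-regular point — legitimate wherever $\Hopf(F,p)$ is defined, and always so when $\Qint$ is discrete by Theorem~\ref{discrete-theorem} — and set $F(p)=0$, $v:=v(F,p)$. If $v=0$, then $p$ is a strict local extremum; by the argument of Case~1 in the proof of Theorem~\ref{discrete-theorem}, $M\cap\{F\le t\}$ is a disk for small $t>0$, so $\{F=t\}$ is a single simple closed curve, which is $C^1$ by tameness (it meets no non-regular point) and has $\Tan(F,\cdot)$ as its tangent line; since the unit tangent of a positively oriented simple closed curve turns once around (Umlaufsatz), its tangent line field winds once, giving $\Hopf(F,p)=1=1-v/2$.

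Now suppose $v=2k\ge 2$. Near $p$ the set $\{F=0\}$ is, by Definition~\ref{rado-definition} and Lemma~\ref{parity-lemma}, a union of $2k$ arcs $C_0,\dots,C_{2k-1}$ from $p$ to $\partial M$ meeting pairwise only at $p$, with $F$ of opposite signs on consecutive sectors; since $C_j\setminus\{p\}$ consists of regular points it is, by tameness, a $C^1$ embedded arc. Exactly as in the proof of Theorem~\ref{discrete-theorem}, after an orientation-preserving homeomorphism that is $C^1$ off $p$ — which does not change $\Hopf(F,p)$, the Hopf index at an isolated singularity being a topological invariant — I may assume $M=\{|w|<1\}$, $p=0$, each $C_j$ is the radius at angle $j\pi/k$, and $F>0$ on the even-indexed sectors.

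The key assertion is then that $\Hopf(F,p)$ depends only on $v$, which I would establish by comparing $F$ with the harmonic model $G_0:=\operatorname{Im}(w^k)=r^k\sin(k\theta)$. Both functions have the radii $C_j$ as leaves and the same $2k$ sectors; inside each closed sector — a disk on which the function vanishes exactly on the two bounding radii, has a fixed sign, and has no interior critical points or local extrema (the one non-regular point $p$ lying on the boundary) — the level sets for small $|t|$ form a "trivial" foliation by embedded $C^1$ arcs joining the two radii near $p$ and converging, in Hausdorff distance as $t\to0$, to $C_j\cup\{p\}\cup C_{j+1}$; hence the two level-set foliations of the sector are matched by a homeomorphism fixing the two radii and $C^1$ off $p$, and these patch (they agree on the radii) to a homeomorphism of $M$ fixing $0$ that carries $\Tan(F,\cdot)$ to $\Tan(G_0,\cdot)$. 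For $G_0$ the computation is immediate: $d(\operatorname{Im}(w^k))=\operatorname{Im}(kw^{k-1}\,dw)=0$ shows that the tangent line to the level sets of $G_0$ at $w=e^{i\theta}$ has angle $-(k-1)\theta$, which turns by $-2\pi(k-1)$ around the unit circle, so $\Hopf(F,p)=\Hopf(G_0,0)=1-k=1-v/2$. The main obstacle is the sector-by-sector "single $C^1$ arc, Hausdorff-converging" description of the level sets near $p$: it rests on the Rad\'o arc-structure at $p$, the absence of other critical points, and the continuity of $\Tan$ supplied by tameness, and the delicate point is to rule out extra winding of the arcs' tangent lines as one approaches $p$ — here one exploits that each sector subtends angular width $\pi/k<\pi$.

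An alternative that sidesteps the foliation comparison: after the normalization, arrange (using tameness) that $F|\partial M$ is a Morse function on the circle with standard transverse behaviour, and double $M$ across $\partial M$. One obtains a tame Rad\'o function $H$ on $S^2$ whose only non-standard critical points are $p$ and its mirror image $p'$, each of valence $2k$ and — interchanged by a topological reflection, hence carrying the same Hopf index — each of index $\Hopf(F,p)$, every other critical point having valence $0$ (index $+1$) or $4$ (index $-1$, elementary). Then Poincar\'e–Hopf gives $2=\chi(S^2)=2\,\Hopf(F,p)+(\#\{v{=}0\}-\#\{v{=}4\})$, while Theorem~\ref{main-theorem} applied to $H$ gives $2=2(1-k)+(\#\{v{=}0\}-\#\{v{=}4\})$; subtracting yields $\Hopf(F,p)=1-k=1-v(F,p)/2$.
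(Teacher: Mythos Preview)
Your $v=0$ argument via the Umlaufsatz is fine. For $v=2k\ge 2$, both routes you propose have genuine gaps. In approach~(a) the gap you flag yourself is real: tameness gives continuity of $\Tan(F,\cdot)$ at regular points but no control on how fast the tangent lines of the level arcs rotate as they approach $p$, so the sector-by-sector leaf-matching homeomorphism you describe need not be $C^1$ off $p$, and without that you cannot transport the Hopf index. In approach~(b) the assertion that the doubled function $H$ on $S^2$ is a \emph{tame} Rad\'o function fails: at a seam point of boundary valence~$1$ (interior valence~$2$ after doubling), the level arc in $M$ and its mirror image typically meet at a corner, so $\Tan(H,\cdot)$ is discontinuous along the former boundary circle and Poincar\'e--Hopf is unavailable. (Rad\'o-ness survives doubling; tameness does not.) Declaring the valence-$4$ seam points to have index $-1$ ``elementary'' is also circular---that is the $k=2$ case of what you are proving.

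The paper's proof is close in spirit to your approach~(b) but repairs both defects at once. After the same normalization it first modifies $F$ in a collar of the unit circle so that the level curves cross $\partial\BB$ \emph{orthogonally} whenever $|F|\le\delta$; this is precisely what makes the reflected line field continuous across the seam. It then works not with the full disk but with $N=\{q\in\BB(0,1):|F(q)|\le\delta\}$, and inverts $N$ in the unit circle to obtain $\tilde N\subset S^2$. The boundary $\partial\tilde N$ now consists entirely of level curves $\{|F|=\delta\}$ and their inversions, so $\Tan(F,\cdot)$ is \emph{tangent} to $\partial\tilde N$; the only singularities of the line field in $\tilde N$ are $0$ and $\infty$, with equal index by the inversion symmetry. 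Poincar\'e--Hopf on the $2k$-holed sphere $\tilde N$ then gives $2\,\Hopf(F,0)=\chi(\tilde N)=2-2k$ directly, with no need to know the index at any extremum or at any valence-$4$ point.
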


\begin{proof}
We use the notation in the proof of Theorem~\ref{discrete-theorem}.
In that proof, we can modify $F$ near the unit circle $\partial \BB$ so that the level sets of $F$ cross the circle orthogonally when
$|F|\le \delta$ for some small $\delta>0$.
By the proof of Theorem~\ref{discrete-theorem}, we can choose $\delta$ small enough so that 
\[
    N:=\{q\in \BB(0,1): |F(q)|\le \delta \}
\]
contains no critical points other than $0$.  Note that $N$ is bounded by $4k$ arcs, $2k$ in the circle $\partial D$
and the other $2k$ in the interior of the unit disk.  Now we can invert $N$ in the unit circle to get a $2$-manifold with boundary $\tilde N$
in $\RR^2\cup \{\infty\}$.   We extend $F$ and $\Tan(F,\cdot)$ to $\tilde N$ by inversion.
Note that $\Tan(F,\cdot)$ is tangent to $\partial \tilde N$, so by the Poincare-Hopf Index Theorem,
\[
  \chi(\tilde N) = \Hopf(F,0) + \Hopf(F,\infty) = 2 \, \Hopf(F,0).
\]
Now $\tilde N$ is $\Ss^2$ with $2k$ disjoint open disks removed,
so $\chi(\tilde N)=2-2k$.  Thus
\[
  2-2k = 2\, \Hopf(F,0).
\]
\end{proof}

\newcommand{\domain}{\operatorname{domain}}

Suppose that $M$ is a locally compact space, that $U_n$ and $U$ are open subsets of $M$, and that $\phi_n:U_n\to V$ and
 $\phi:U\to V$ are continuous maps 
to a metrizable space $V$.  We say that $\phi_n$ converges locally uniformly to $\phi$ provided the following holds:
if $p\in \domain(\phi)$ and if $p_n\to p$, then $p_n\in \domain(\phi_n)$ for all sufficiently large $n$ and 
$\phi_n(p_n)\to \phi(p)$.  It follows that if $\phi_n$ converges locally uniformly to $\phi$ 
and if $K$ is a compact subset of $\domain(\phi)$, then $K\subset \domain(\phi_n)$ for all sufficiently large $n$, 
and $\phi_n|K$ converges uniformly to $\phi|K$.

\begin{theorem}\label{continuity-theorem}
Suppose $F_n: M_n\to \RR$ and $F: M\to \RR$ are tame Rad\'o functions, where $M_n$ is an exhaustion of $M$,
such that $\Tan(F_n,\cdot)$ converges locally
uniformly to $\Tan(F,\cdot)$.   Suppose that $K$ is a compact region of $M\setminus \partial M$ such
that $\partial K$ is contained in the regular set of $F$.  Then for all sufficiently large $n$,
\begin{equation}\label{hopf-counts}
   \sum_{p\in K} \left(1 - \frac{v(F_n,p)}2 \right)  = \sum_{p\in K} \left(1 - \frac{v(F_n,p)}2 \right).
\end{equation}
Equivalently,
\[
    \sum_{p\in K} w(F_n,p)  = \sum_{p\in K} w(F,p).
\]
\end{theorem}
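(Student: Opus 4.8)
The plan is to pass to the level--set line fields and deduce the claim from the homotopy invariance of their total index. By Theorem~\ref{hopf-theorem}, $1-\frac12 v(F,p)=\Hopf(F,p)$ at every interior point $p$, and likewise $1-\frac12 v(F_n,p)=\Hopf(F_n,p)$; hence the two displayed formulas are equivalent, and it suffices to prove
\[
\sum_{p\in K}\Hopf(F,p)=\sum_{p\in K}\Hopf(F_n,p)\qquad\text{for all sufficiently large }n.
\]
Here $\Hopf(F,\cdot)$ is the index of the line field $\Tan(F,\cdot)$, which I regard as a line field on $K$: since $\partial K\subset\Reg(F)$, it is nonsingular on $\partial K$, and its singularities in $K$ are exactly the critical points of $F$, all of which lie in $\interior K$. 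The same holds, for all large $n$, for $\Tan(F_n,\cdot)$.

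First I would unpack the convergence hypothesis along $\partial K$. The set $\partial K$ is a compact $1$--manifold contained in $\domain(\Tan(F,\cdot))=\Reg(F)$, and $M_n$ exhausts $M$; so for all large $n$ we have $K\subset M_n$, $\partial K\subset\Reg(F_n)$ (in particular $F_n$ has no critical point on $\partial K$), and $\Tan(F_n,\cdot)|\partial K\to\Tan(F,\cdot)|\partial K$ uniformly. Two line fields on $\partial K$ that are everywhere close differ pointwise by a small angle modulo $\pi$; interpolating that angle linearly produces a homotopy, so for all large $n$ the field $\Tan(F_n,\cdot)|\partial K$ is homotopic to $\Tan(F,\cdot)|\partial K$ through nonsingular line fields on $\partial K$.

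The remaining input is the relative Poincar\'e--Hopf theorem: if $\xi$ is a line field on a compact surface $K$ whose singularities are isolated and contained in $\interior K$, and $\xi$ is nonsingular on $\partial K$, then $\sum_{p\in K}\operatorname{ind}(\xi,p)$ depends only on the homotopy class of $\xi|\partial K$ among nonsingular line fields on $\partial K$ (it is the relative Euler number determined by $\xi|\partial K$; from the obstruction--theory viewpoint, with classifying space $\RR P^1$, this invariance is immediate). Applying this on $K$ to $\xi=\Tan(F,\cdot)$ and to $\xi=\Tan(F_n,\cdot)$, and using the homotopy of the previous paragraph, gives the desired equality, hence the theorem.

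The step I expect to be the main obstacle is not this invariance but verifying that the two sums are finite, i.e.\ that $F$ and $F_n$ have only finitely many critical points in $K$, so that ``total index'' is meaningful. When the set of interior local extrema of the Rad\'o function in question is closed and discrete --- which is automatic in the intended applications, being empty for minimal surfaces --- this follows from Theorem~\ref{discrete-theorem}; otherwise one can argue a posteriori, using that every interior critical point of a Rad\'o function has $|\Hopf|=|1-\frac12 v|\ge 1$, so that the total index along $\partial K$, which is finite and determined by $\xi|\partial K$ alone, together with $\chi(K)$, bounds the number of critical points of $\xi$ in $K$. Finally, note that the indices occurring here are integers, so the half--integer normalization of line--field indices plays no role.
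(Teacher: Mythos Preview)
Your argument is correct and follows the same essential idea as the paper: the sums are total Hopf indices of the level-set line fields on $K$, and these are determined by the restriction to $\partial K$; since $\Tan(F_n,\cdot)|\partial K\to\Tan(F,\cdot)|\partial K$ uniformly, the restrictions are eventually homotopic, hence the indices agree. The only difference is packaging. You invoke the relative Poincar\'e--Hopf theorem on $K$ directly, whereas the paper first does the case where $K$ is a single closed disk (there the total index is just the degree of the map $\partial K\to\RR P^1$, $q\mapsto\Tan(\cdot,q)$, and equality follows from homotopy invariance of degree), and then reduces the general case to this one by choosing a finite disjoint union $D$ of closed disks in $\interior K$ whose interior contains every critical point of $F$ in $K$; for large $n$ the set $K\setminus\interior D$ lies in $\Reg(F_n)$, so both sums over $K$ equal the corresponding sums over $D$. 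The paper's route is slightly more elementary (it only needs the degree of a circle map), while yours is cleaner and avoids the auxiliary disks.

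One point worth noting: the paper's reduction silently uses that $F$ has only finitely many critical points in $K$, while you flag this explicitly and sketch a justification. Your caution is warranted; the clean way to handle it under the stated hypotheses is via Theorem~\ref{discrete-theorem}, which in the intended applications (minimal Rad\'o functions, where there are no interior extrema) applies immediately. Your a~posteriori argument bounding the number of critical points by the boundary index needs a small adjustment, since local extrema contribute index $+1$ and saddles contribute negative index, so cancellation could in principle occur; but once one assumes the extrema are discrete (hence finite in $K$), Theorem~\ref{discrete-theorem} already gives what is needed.
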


\begin{proof}
First, consider the case when $K$ is topologically a disk.  Then we can choose local coordinates
so that $K$ is a disk in $\RR^2$.  By the Poincare-Hopf Theorem, the right-hand side of~\eqref{hopf-counts}
is equal to the degree of the map
\newcommand{\RP}{\mathbf{RP}}
\[
   q\in \partial K \mapsto \Tan(F,q) \in \RP^1,
\]
and the left side is equal to the degree of 
\[
   q\in \partial K \mapsto \Tan(F_n,q) \in \RP^1,
\]
By hypothesis, the two maps are homotopic for all sufficiently large $n$, and thus the two degrees are equal.

In the general case, let $D$ be a finite union of disjoint closed disks in the interior of $K$
such that the interior $U$ of $D$ contains all the critical points of $F$ in $K$.
Now $K\setminus U$ is contained in $\Reg(F)$, so it is contained in $\Reg(F_n)$ for all sufficiently large $n$.
Thus by the simply-connected case,
\begin{align*}
\sum_{p\in K} \left(1 - \frac{v(F_n,p)}2 \right)
&=
\sum_{p\in D} \left(1 - \frac{v(F_n,p)}2 \right)  \\
&=
\sum_{p\in D} \left(1 - \frac{v(F,p)}2 \right) \\
&=
\sum_{p\in K} \left(1 - \frac{v(F,p)}2 \right),
\end{align*}
for all sufficiently large $n$.
\end{proof}

\begin{definition}
Suppose that $F:M\to\RR$ is a Rad\'o function.  We say that $F$ is a {\bf minimal Rad\'o function}
provided $F$ has no interior local minima and no interior local maxima.
\end{definition}

We call these functions minimal Rad\'o functions because the Rad\'o functions that arise from minimal surfaces
as in Theorem~\ref{minimal-surface-theorem} have no interior local minima or interior local maxima.

\begin{theorem}\label{lower-semicontinuity-theorem}[Lower Semicontinuity Theorem]
Suppose that $F_n:M_n\to \RR$ and $F:M\to \RR$ are  tame minimal Rad\'o functions, where $M_n$ is an exhaustion
of $M$. Suppose also  that $\Tan(F_n,\cdot)$ converges locally uniformly to $\Tan(F,\cdot)$.
Then
\[
 \sum_{p\in M\setminus \partial M} w(F, p) \le \liminf  \sum_{p\in M_n\setminus \partial M_n} w(F_n, p).
\]
\end{theorem}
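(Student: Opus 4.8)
The plan is to reduce everything to Theorem~\ref{continuity-theorem} — which gives an \emph{exact} equality of (multiplicity-counted) interior saddle numbers on a compact region whose boundary lies in the regular set of $F$ — together with Theorem~\ref{discrete-theorem}, which guarantees that for a tame minimal Rad\'o function the interior critical points form a closed discrete set. Throughout, write
\[
 W := \sum_{p\in M\setminus\partial M} w(F,p)\in[0,\infty], \qquad W_n := \sum_{p\in M_n\setminus\partial M_n} w(F_n,p).
\]
Since $F$ is a minimal Rad\'o function, the points $p$ with $w(F,p)>0$ are exactly its interior saddles, so $W$ is the supremum of $\sum_{p\in S} w(F,p)$ taken over all \emph{finite} subsets $S$ of the set of interior saddles of $F$. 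Hence it is enough to show, for each such finite $S$, that $\liminf_n W_n\ge\sum_{p\in S}w(F,p)$; taking the supremum over $S$ then yields the theorem.

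So fix a finite set $S=\{p_1,\dots,p_m\}$ of interior saddles of $F$. Because $F$ has no interior local minimum and no interior local maximum, its set $\Qint$ of interior local extrema is empty, hence closed and discrete; as $F$ is tame, Theorem~\ref{discrete-theorem} applies and shows that the set $C$ of interior critical points of $F$ — which here is precisely the set of interior saddles — is closed and discrete in $M\setminus\partial M$. I would then choose pairwise disjoint closed topological disks $D_1,\dots,D_m\subset M\setminus\partial M$ with $p_i\in\interior(D_i)$ and $D_i\cap C=\{p_i\}$ (possible since $C$ is closed and discrete), and set $K:=D_1\cup\dots\cup D_m$. This $K$ is a compact region in $M\setminus\partial M$; its boundary $\partial K=\bigcup_i\partial D_i$ consists of interior points of $M$ that are not critical points of $F$, hence of valence $2$, i.e.\ of regular points of $F$; and the critical points of $F$ lying in $K$ are exactly the points of $S$.

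Now I would apply Theorem~\ref{continuity-theorem} to this $K$ (its hypotheses — tameness, the exhaustion $M_n$ of $M$, local uniform convergence $\Tan(F_n,\cdot)\to\Tan(F,\cdot)$, and $\partial K$ in the regular set of $F$ — all hold): for all sufficiently large $n$,
\[
 \sum_{p\in K}w(F_n,p)=\sum_{p\in K}w(F,p)=\sum_{p\in S}w(F,p).
\]
Since $K$ is a compact subset of the interior $M\setminus\partial M$ and $M_n$ exhausts $M$, one has $K\subset M_n\setminus\partial M_n$ for all large $n$, whence $W_n\ge\sum_{p\in K}w(F_n,p)$ because each $w(F_n,p)\ge0$. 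Therefore $W_n\ge\sum_{p\in S}w(F,p)$ for all large $n$, so $\liminf_n W_n\ge\sum_{p\in S}w(F,p)$, which completes the reduction.

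Given how much is already in hand, there is no genuine obstacle here: Theorems~\ref{discrete-theorem} and~\ref{continuity-theorem} carry the real content. The only two points that need a little care are (a) replacing the possibly infinite sum $W$ by its finite truncations, which is what lets a $\liminf$ — rather than a limit — suffice, and (b) using discreteness of $C$ to build the cutoff region $K$ so that $\partial K$ avoids \emph{all} critical points of $F$, since that is exactly the hypothesis under which Theorem~\ref{continuity-theorem} delivers an equality rather than merely an inequality.
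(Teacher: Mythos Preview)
Your proof is correct and follows essentially the same route as the paper's: both reduce to Theorem~\ref{continuity-theorem} applied to a compact region $K\subset M\setminus\partial M$ whose boundary lies in the regular set of $F$, use nonnegativity of $w$ (from minimality) to pass from $K$ to all of $M_n\setminus\partial M_n$, and then exhaust. The only cosmetic difference is that the paper takes an arbitrary compact $K$ and enlarges it to some $K'$ with $\partial K'$ regular, whereas you start from a finite set $S$ of saddles and build $K$ as a union of small disks around them; your version has the virtue of making the appeal to Theorem~\ref{discrete-theorem} explicit, which is what guarantees such a $K$ (or the paper's $K'$) can be found.
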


\begin{proof}
It suffices to prove it for $M$ without boundary. (Otherwise replace $M$ by $M\setminus \partial M$ in the following proof.)

Note that since there are no local maxima or minima, $w(F,p)$ and $w(F_n,p)$ are both nonnegative for every $p$.

Let $K$ be any compact subset of $M$.  Let $K'$ be a compact subset of $M$ such that $K\subset K'$ and such that 
$\partial K'$ lies in the regular set of $F$.
For all sufficiently large $n$,
\[
   \sum_{p\in M_n}w(F_n,p) \ge \sum_{p\in K'}w(F_n,p) = \sum_{p\in K'}w(F,p) \ge \sum_{p\in K}w(F,p),
\]
by Theorem~\ref{continuity-theorem}.  Thus
\[
  \liminf_n \sum_{p\in M}w(F_n,p) \ge \sum_{p\in K}w(F,p).
\]
Now take the supremum over all $K\subset\subset M$.
\end{proof}

\begin{corollary}\label{lower-semicontinuity-corollary}
Suppose that
\begin{enumerate}
\item $g_n$ are smooth Riemannian metrics on a $3$-manifold $N$ that converge smoothly to a metric $g$.
\item $\Ff_n$ are $g_n$-minimal foliations of $N$ that converge to a $g$-minimal foliation $\Ff$.
\item $M_n$ are $g_n$-minimal surfaces that converge smoothly to a $g$-minimal surface $M$.
\item No connected component of $M$ lies in a leaf of $\Ff$.
\end{enumerate}
Then
\begin{equation}\label{N-lower-semicontinuity}
  \mathsf{N}(\Ff,M) \le \liminf \mathsf{N}(\Ff_n,M_n)
\end{equation}
where $\mathsf{N}(\Ff,M)$ is the number of interior points of tangency of $\Ff$ and $M$, counting multiplicity.
\end{corollary}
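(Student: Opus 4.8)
The plan is to reduce the statement to a finite, local computation and invoke Theorem~\ref{continuity-theorem} there. First I would set up the Rad\'o-function picture. Near any interior point of $M$, the foliation $\Ff$ is the level-set foliation of a submersion of a neighborhood onto an interval, so, using the hypothesis that no connected component of $M$ lies in a leaf of $\Ff$, Theorem~\ref{minimal-surface-theorem} and Theorem~\ref{minimal-tame-theorem} show that the restriction of such a local defining function to $M$ is a tame minimal Rad\'o function whose interior saddles are exactly the points of tangency of $\Ff$ with $M$, with multiplicity equal to the order of contact. By Theorem~\ref{discrete-theorem}, the set $P$ of these tangency points is a discrete subset of $M\setminus\partial M$, and $\mathsf{N}(\Ff,M)=\sum_{p\in P}w(p)$, each contact order $w(p)$ being a positive integer. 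Since $P$ is therefore countable, it suffices to prove that for every finite set $\{p_1,\dots,p_m\}\subset P$, with contact orders $w_1,\dots,w_m$, one has $\liminf_n\mathsf{N}(\Ff_n,M_n)\ge w_1+\cdots+w_m$.

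For a fixed such finite set, use the discreteness of $P$ to choose pairwise disjoint closed disks $\overline{D_1},\dots,\overline{D_m}\subset M\setminus\partial M$ with $p_j$ in the interior of $D_j$ and $\overline{D_j}\cap P=\{p_j\}$, and for each $j$ a neighborhood $V_j\supset\overline{D_j}$ in $N$ on which $\Ff$ is the level-set foliation of a submersion $F_j$. The smooth convergences $g_n\to g$, $\Ff_n\to\Ff$ and $M_n\to M$ then provide, for all large $n$: a submersion $F_{j,n}$ on $V_j$ whose level sets are the leaves of $\Ff_n$ and which (after normalizing along a fixed transversal) converges smoothly to $F_j$; and a realization of $M_n$ as a normal graph over $V_j\cap M$, hence a disk $D_{j,n}\subset M_n$ together with a diffeomorphism $D_{j,n}\to D_j$. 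Transporting $F_j|M$ and the functions $F_{j,n}|M_n$ to a fixed coordinate disk by means of these maps produces tame minimal Rad\'o functions (the Rad\'o and tameness properties by Theorems~\ref{minimal-surface-theorem} and~\ref{minimal-tame-theorem}, using that $M_n$ is not contained in a leaf of $\Ff_n$ near $\overline{D_{j,n}}$ because $M$ is not contained in a leaf near $p_j$), and by the last assertion of Theorem~\ref{minimal-tame-theorem} their tangent-line fields converge locally uniformly. Since $\partial D_j$ avoids $P$, it lies in the regular set of $F_j|M$, so Theorem~\ref{continuity-theorem} (applied on the coordinate disk, with the constant exhaustion) gives, for all large $n$,
\[
   \sum_{p\in D_{j,n}}w(\Ff_n,M_n,p)=\sum_{p\in D_j}w(\Ff,M,p)=w_j.
\]

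Summing over $j=1,\dots,m$, and using that $\mathsf{N}(\Ff_n,M_n)$ is the sum of positive contact orders over \emph{all} interior tangency points of $\Ff_n$ with $M_n$ (so that discarding the tangencies outside $D_{1,n}\cup\cdots\cup D_{m,n}$ only decreases it), we obtain
\[
   \mathsf{N}(\Ff_n,M_n) \ge \sum_{j=1}^m \sum_{p\in D_{j,n}}w(\Ff_n,M_n,p) = \sum_{j=1}^m w_j
\]
for all large $n$, hence $\liminf_n\mathsf{N}(\Ff_n,M_n)\ge\sum_{j=1}^m w_j$. Taking the supremum over all finite subsets of $P$ yields $\liminf_n\mathsf{N}(\Ff_n,M_n)\ge\sum_{p\in P}w(p)=\mathsf{N}(\Ff,M)$, which is~\eqref{N-lower-semicontinuity}. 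The step I expect to require genuine care is the middle one: manufacturing, out of the converging ambient data $(g_n,\Ff_n,M_n)$, an honest sequence of tame Rad\'o functions on one fixed disk whose tangent-line fields converge, so that Theorems~\ref{minimal-tame-theorem} and~\ref{continuity-theorem} apply exactly as stated; once this bookkeeping is done the remaining argument is formal.
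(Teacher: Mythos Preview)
Your proposal is correct and is essentially the paper's argument, just packaged differently. The paper first invokes Theorem~\ref{lower-semicontinuity-theorem} directly (which itself is proved by applying Theorem~\ref{continuity-theorem} to compact regions with boundary in the regular set, exactly as you do on your disks $D_j$), and then handles the absence of a global defining function by observing that the tangent-line fields $\Tan(F|M,\cdot)$ and $\Tan(F_n|M_n,\cdot)$ are globally well defined on $M\setminus\partial M$ independent of the choice of local $F$, so the proof of Theorem~\ref{lower-semicontinuity-theorem} goes through verbatim. You instead localize from the outset, choosing disks small enough that genuine defining functions exist, and then reproduce the compact-region step of Theorem~\ref{lower-semicontinuity-theorem} by hand; the ``bookkeeping'' step you flag is precisely what the paper's global-line-field remark is designed to sidestep.
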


In other words, $\mathsf{N}(\Ff,M)$ is the sum over the interior points $p\in M$ of the order of contact at $p$ of $M$ and the leaf of $\Ff$ through $p$.

\begin{proof}
Suppose first that $\Ff_n$ and $\Ff$ are given as the level sets of functions $F_n$ and $F$ on $N$
 as in Theorem~\ref{minimal-surface-theorem}.
Then $F_n|M_n$ and $F|M$ are tame Rad\'o functions by Theorems~\ref{minimal-surface-theorem}
and~\ref{minimal-tame-theorem}, and $\Tan(F_n|M_n,\cdot)$ converges locally uniformly
to $\Tan(F|M,\cdot)$ by Theorem~\ref{minimal-tame-theorem}.
Thus~\eqref{N-lower-semicontinuity} holds by Theorem~\ref{lower-semicontinuity-theorem}.

In general, $\Ff_n$ and/or $\Ff$ might not be expressible as the level sets of functions $F_n$ and $F$.  However, locally that is always
possible. Furthermore, even if $F_n$ and $F$ are only defined locally, $\Tan(F_n,\cdot)$ and $\Tan(F,\cdot)$ make
sense globally, and the proof of Theorem~\ref{lower-semicontinuity-theorem} only really depends on those line fields, and not
on the functions themselves.  Thus Corollary~\ref{lower-semicontinuity-corollary} holds for arbitrary minimal foliations.
\end{proof}

\section{Slices}

\newcommand{\set}{B}

A classical result of Rad\'o states that if a plane in $\RR^3$ intersects the boundary of a minimal disk in fewer than four points, then
it intersects the disk transversely.  In this section, we prove a very general form of Rad\'o's principle.

(When we apply the following theorem to minimal surfaces, the set $S$ and the set
of interior local maxima and minima will typically be empty.)

\begin{theorem}[Slice Theorem]\label{slice-theorem}
Let $M$ be a $2$-manifold 
and $F:M\to \RR$ be a continuous
function with only finitely many interior local minima and maxima.
Suppose that there is a finite set $S$ of interior points such that $F$ is a Rad\'o function on $M\setminus (\partial M\cup S)$.
Let $X=F^{-1}(t)$ be a level set of $M$.  
Suppose that
\begin{enumerate}[\upshape(1)]
\item $X$ is compact.
\item $X\cap \partial M$ is finite.
\item $d_1(M):=\dim H_1(M;\ZZ_2) <\infty$.
\end{enumerate}
Then $X$ is a finite network. 
 
Now suppose that $F$ is a Rad\'o function on all of $M\setminus \partial M$ (i.e., that the set $S$ is empty.)
Let 
\[
  V_n = \{p\in X: v(F,p)=n\}
\]
be the set of nodes of valence $n$ in $X$. 
Then
\begin{equation}\label{cadabra}
\begin{aligned}
\frac12 \sum_{n\ge 3} ( n- 2)  |V_n|  + d_0(X \setminus V_0)
&\le
\frac12 \, |V_1| +  d_1(M) +  |S^*| 
\\
&\le 
\frac12\, |J| + d_1(M) + |S^*|, 
\\
&\le
\frac12 \, k + d_1(M) + |S^*|,
\end{aligned}
\end{equation}
where $J$ is the set of points in $X\cap \partial M$ that are neither local maxima nor local minima of $F|\partial M$,
$S^*$ is the set of interior local maxima and local minima of $F$ in $M\cap\{F\ne t\}$, 
and $k$ is the number of points $p$ in $\partial M$ where $F(p)=t$.
\end{theorem}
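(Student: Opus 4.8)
The plan is to collapse the whole chain \eqref{cadabra} to the single homological estimate $d_1(X)\le d_1(M)+|S^*|$, where $d_1(X):=\dim H_1(X;\ZZ_2)$, and then to prove that estimate by a Radó-type bounded-region argument of the sort used in Lemma~\ref{monomorphism-lemma}, after first establishing that $X$ is a finite network. For the network claim: every point of $X$ in $M\setminus(\partial M\cup S)$ has a Radó neighborhood $U$ in which $X\cap U$ is a finite union of arcs issuing from the center, and intersecting a Radó neighborhood of any other $q\in X\cap U$ with $U$ forces $q$ to have valence $2$; so in $M\setminus(\partial M\cup S)$ the points of valence $\ne 2$ are isolated among such points, and by compactness of $X$ it remains only to control $X$ near the finitely many points of $(X\cap\partial M)\cup(X\cap S)$. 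Near such a point one rules out infinitely many components or branch points of $X$ accumulating there: a sequence of small loops in $\{F=t\}$ clustering at the point either produces (for the loops not enclosing it) disks on which $F$ has an interior local extremum --- impossible since $F$ has only finitely many of those --- or produces nested loops, excluded by the bounded-region argument below. Hence $X$ is a finite graph and $v(F,p)$ is defined and finite at every point of $X$.

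Next I reduce the first inequality. By Lemma~\ref{euler-lemma} and Lemma~\ref{parity-lemma} (there are no interior points of odd valence),
\[
  \chi(X)=\sum_{p\in X}\tfrac12\bigl(2-v(F,p)\bigr)=|V_0|+\tfrac12|V_1|-\tfrac12\sum_{n\ge 3}(n-2)|V_n|.
\]
Since $X$ is a graph, $\chi(X)=d_0(X)-d_1(X)$ with $d_0(X)$ the number of components, and the valence-$0$ points of $X$ are precisely its isolated points, so $d_0(X)=|V_0|+d_0(X\setminus V_0)$. Eliminating $\chi(X)$, $|V_0|$ and $d_0(X)$ yields $\tfrac12\sum_{n\ge 3}(n-2)|V_n|+d_0(X\setminus V_0)=\tfrac12|V_1|+d_1(X)$, so the first inequality in \eqref{cadabra} is exactly $d_1(X)\le d_1(M)+|S^*|$. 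The remaining two are immediate: a point of valence $1$ is a boundary point (interior valences are even) which, having odd valence, is neither a local maximum nor a local minimum of $F|\partial M$ by Lemma~\ref{parity-lemma}; hence $V_1\subseteq J\subseteq X\cap\partial M$, giving $|V_1|\le|J|\le k$.

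It remains to prove $d_1(X)\le d_1(M)+|S^*|$. Since $S^*$ consists of interior local extrema of $F$ at levels $\ne t$, we have $X\subseteq M\setminus S^*$; write the inclusion as $j\colon X\hookrightarrow M\setminus S^*$ followed by $\kappa\colon M\setminus S^*\hookrightarrow M$, so $i_*=\kappa_*\circ j_*$ on $H_1(\--;\ZZ_2)$. The key point is that $j_*$ is injective: since $X$ is a graph, $H_1(X;\ZZ_2)=Z_1(X;\ZZ_2)$, so this says that no nonzero $1$-cycle $C$ in $X$ bounds in $M\setminus S^*$. If $C=\partial K$ for a compactly supported $\ZZ_2$-chain $K$ in $M\setminus S^*$, then (as $X\cap\partial M$ is finite, $C$ meets $\partial M$ in finitely many points, so the interior of $\operatorname{supp}K$ lies in the interior of $M$) the maximum of $F$ on $\operatorname{supp}K$ is attained either on $C$, where $F=t$, or at an interior local maximum of $F$, which cannot have value $>t$ lest it lie in $S^*$; so $\max_{\operatorname{supp}K}F=t$. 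The same argument applied to the minimum forces $F\equiv t$ on $\operatorname{supp}K$, contradicting that $\operatorname{supp}K$ is two-dimensional while $X$ is a graph. Granting injectivity of $j_*$, $\ker i_*=j_*^{-1}(\ker\kappa_*)$ has dimension at most $\dim\ker\kappa_*$, and the exact sequence $H_2(M,M\setminus S^*;\ZZ_2)\to H_1(M\setminus S^*;\ZZ_2)\to H_1(M;\ZZ_2)$, together with $H_2(M,M\setminus S^*;\ZZ_2)\cong\bigoplus_{s\in S^*}H_2(M,M\setminus\{s\};\ZZ_2)\cong\ZZ_2^{|S^*|}$ by excision (every $s\in S^*$ is interior), gives $\dim\ker\kappa_*\le|S^*|$. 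Hence $d_1(X)=\dim\ker i_*+\dim\operatorname{im}i_*\le|S^*|+d_1(M)$, which completes the proof.

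I expect the homological core in the third paragraph to be short and robust; the fussiest steps are showing $X$ is a finite network near the exceptional points of $S$ and $\partial M$, and the routine point-set care needed in the bounded-region argument to see that $\operatorname{supp}K$ stays away from $\partial M$ --- both of which parallel issues already handled in the proof of Lemma~\ref{monomorphism-lemma}.
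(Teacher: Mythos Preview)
Your second and third paragraphs match the paper's proof: the identity $\tfrac12\sum_{n\ge 3}(n-2)|V_n|+d_0(X\setminus V_0)=\tfrac12|V_1|+d_1(X)$ is isolated in the paper as a separate Counting Theorem, the bounded-region argument giving injectivity of $H_1(X;\ZZ_2)\to H_1(M\setminus S^*;\ZZ_2)$ (hence $d_1(X)\le d_1(M\setminus S^*)=d_1(M)+|S^*|$) is exactly Lemma~\ref{monomorphism-lemma}, and the inclusions $V_1\subseteq J\subseteq X\cap\partial M$ via parity are the paper's final step.

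The gap is in your first paragraph, and it is the step you yourself flag as ``fussiest.'' Your claim that infinitely many branch points accumulating at an exceptional point must produce ``a sequence of small loops in $\{F=t\}$ clustering at the point'' is not justified: even granting that branch-point accumulation forces loops, those loops need not be small or local to the exceptional point, so your disk/nesting dichotomy does not apply; and you have not actually argued why branch points force loops at all. The paper's remedy is to reverse your order. First establish $d_1(X)<\infty$ via the monomorphism argument (this needs only $X\subseteq M\setminus S^*$ and that $X$ contains no $2$-dimensional piece, not that $X$ is already a finite network). Then deduce finiteness from a general Finiteness Theorem: a compact Hausdorff space $X$ with $d_1(X)<\infty$, in which every point outside a finite set $B$ has a well-defined valence $\ge 2$, is a finite network. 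The proof is by induction on $d_1(X)$. When $d_1(X)=0$, each component $T$ of $X\setminus B$ is a tree with no leaves, all of whose ends limit to points of $B$, and no two ends of $T$ can limit to the same point of $B$ (else $d_1(\overline T)>0$); hence $T$ has at most $|B|$ ends and is a finite tree, and a pigeonhole on $\overline T\setminus T\subseteq B$ shows there are only finitely many such $T$. When $d_1(X)>0$, remove an edge lying on a loop to drop $d_1$ and induct. This is the missing ingredient in your sketch.
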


Note that $d_0(X\setminus V_0)$ is the number of components of $X$ that are not isolated points.

In the examples that arise in minimal surface theory, $S^*$ is empty:

\begin{corollary}
If $S^*=\emptyset$,
then
\begin{align*}
\frac12 \sum_{n\ge 3}(n-2)\,|V_n|  + d_0(X \setminus V_0) 
&\le \frac12 |J| + d_1(M) \\
&\le \frac12 k + d_1(M),
\end{align*}
\end{corollary}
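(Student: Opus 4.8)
The plan is to obtain this corollary directly from the Slice Theorem (Theorem~\ref{slice-theorem}) by specializing to the case $S^* = \emptyset$. Recall that in Theorem~\ref{slice-theorem} the set $S^*$ is, by definition, the set of interior local maxima and local minima of $F$ lying in $M \cap \{F \ne t\}$, and that the three-line chain of inequalities~\eqref{cadabra} is proved under hypotheses (1)--(3) together with the assumption that $F$ is a Rad\'o function on all of $M \setminus \partial M$ (the set $S$ empty). The corollary inherits all of these hypotheses verbatim --- $M$ a $2$-manifold, $F$ continuous with only finitely many interior local extrema, $X = F^{-1}(t)$ compact with $X \cap \partial M$ finite, $d_1(M) < \infty$, and $F$ Rad\'o on $M \setminus \partial M$ --- and adds only the further condition $S^* = \emptyset$. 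Under that condition $|S^*| = 0$, so the summand $|S^*|$ on the right-hand side of each of the three lines of~\eqref{cadabra} simply drops out. Substituting $|S^*| = 0$ into~\eqref{cadabra} then yields exactly the asserted inequalities
\[
\frac12 \sum_{n\ge 3}(n-2)\,|V_n| + d_0(X \setminus V_0) \le \frac12 |J| + d_1(M) \le \frac12 k + d_1(M).
\]

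There is no genuine obstacle here: the corollary is a bookkeeping statement recording the form that Theorem~\ref{slice-theorem} takes in the setting --- flagged in the parenthetical remark preceding that theorem --- most relevant to minimal surface theory, where the exceptional singular set and the set of interior local extrema off the slice are both empty. Accordingly, the proof I would write is a single sentence: ``This is immediate from Theorem~\ref{slice-theorem}, since $|S^*| = 0$.'' All the real content lives in the proof of Theorem~\ref{slice-theorem} itself, which this corollary merely quotes; the only thing worth a second glance is the (routine) observation above that the hypotheses of the corollary are a special case of those under which~\eqref{cadabra} was established.
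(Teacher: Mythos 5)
Your proposal is correct and matches the paper's intent exactly: the corollary is stated immediately after Theorem~\ref{slice-theorem} with no separate proof precisely because it is the specialization $|S^*|=0$ of the chain of inequalities~\eqref{cadabra}. Nothing further is needed.
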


\begin{proof}[Proof of Theorem~\ref{slice-theorem}]
By Lemma~\ref{monomorphism-lemma}, the inclusion of $X$ into $M\setminus S^*$ induces a monomorphism of $H_1(\--; \ZZ_2)$, 
so 
\[
   d_1(X) \le d_1(M\setminus S^*) = d_1(M) + |S^*| < \infty.
\]
Thus, by a general theorem about networks (Theorem~\ref{finiteness-theorem}),
$X$ is a finite network.
(One lets the set $\set$ in Theorem~\ref{finiteness-theorem} be the union of the following
three sets: the set of interior local minima and interior local maxima of $F$ in $X$,
the set $S$, and the set $X \cap \partial M$.)
By a general counting theorem (Theorem~\ref{counting-theorem}) for finite graphs,
\begin{equation}\label{abra}
\begin{aligned}
\sum_{n\ge 3} \frac12 (n-2)|V_n| + d_0(X\setminus V_0)
&=
\frac12 |V_1|  + d_1(X) 
\\
&\le
\frac12 |V_1| + d_1(M) + |S^*|.
\end{aligned}
\end{equation}

Now suppose that $S$ is empty.
Recall that for every interior point $p$, $v(F,p)$ is even.
For a boundary point $p$, $V(F,p)$ is even if and only if $p$ is a local maximum or local minimum of $F|\partial M$.
Hence $V_1\subset J$.  Thus~\eqref{cadabra} follows from~\eqref{abra}.
\end{proof}

\begin{remark}
In the bound~\eqref{cadabra}, we could let $S^*$ be the set consisting of interior local minima of  $F$ in $\{F<t\}$ and of interior local maxima of $F$ in $\{F>t\}$.  No changes are required in the proof.  In the case of minimal Rad\'o functions, there are no interior local maxima or minima.
\end{remark}

We now prove the finiteness theorem for general networks that was used  in
the proof of Theorem~\ref{slice-theorem} (the Slice Theorem).

Let $p$ be a point in a topological space $X$ and $k$ be a nonnegative integer.
Suppose
$p$ has a neighborhood $U$ such that $X\cap U$ is the union of $k$ embedded curves,
where each curve joins $p$ to a point in $\partial U$ and where the curves intersect each other only at $p$.
Then we say that $X$ has {\bf valence} $k$ at $p$ and write $v(p)=V(X,p)=k$.
If there is no such $k$ and $U$, then $v(X,p)$ is undefined.

\begin{theorem}[Finiteness Theorem]\label{finiteness-theorem}
Let $X$ be a compact Hausdorff space.
Suppose that $\set\subset X$ is a finite set with the following properties.
\begin{enumerate}[\upshape (1)]
\item Each point $p$ in $X\setminus \set$ has a well-defined valence $v(p)$ that is $\ge 2$.
\item $d_1(X):=\dim H_1(X;\ZZ_2)$ is finite.
\end{enumerate}
Then $X$ is a finite network.
\end{theorem}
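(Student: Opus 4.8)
The plan is to use the valence hypothesis to show that, away from a \emph{finite} set $\Sigma\supseteq B$, the space $X$ is a $1$-manifold, and then to reassemble $X$ as a finite $1$-complex. Note at the outset that any point of valence $0$ or $1$ must lie in $B$, since every point of $X\setminus B$ has valence $\ge 2$; hence $X$ has only finitely many points of valence $\le 1$.

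First I would record the local picture coming from the definition of valence. Each $p\in X\setminus B$ has a neighborhood $U_p\subseteq X\setminus B$ in which $X$ is a union of $v(p)\ge 2$ arcs joining $p$ to $\partial U_p$ and meeting pairwise only at $p$; in particular every point of $U_p$ other than $p$ has valence $2$. Consequently the set $\mathcal{S}$ of \emph{branch points} (points of valence $\ge 3$) is a closed discrete subset of $X\setminus B$, its only possible accumulation points in $X$ lie in the finite set $B$, and any compact $K\subseteq X\setminus B$, being covered by finitely many $U_p$'s, can contain only finitely many branch points.

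The heart of the matter — and the step I expect to be the main obstacle — is to prove that $\mathcal{S}$ is finite and that each point of $B$ is an endpoint of only finitely many of the arcs constituting $X$. I would argue by contradiction: if either fails, produce finite subgraphs $Y\subseteq X$ with $b_1(Y)$ arbitrarily large and with $H_1(Y;\ZZ_2)\to H_1(X;\ZZ_2)$ injective, contradicting $\dim H_1(X;\ZZ_2)<\infty$. If $\mathcal{S}$ is infinite, then since it accumulates only on the finite set $B$, infinitely many branch points cluster at a single $b\in B$; locally near $b$ (and off $b$ itself) $X$ is a locally finite graph with every vertex of degree $\ge 2$. The mechanism is the count in Lemma~\ref{euler-lemma}: for a finite connected graph one has $\sum_{v(q)\ge 3}(v(q)-2)=(\text{number of leaves})+2b_1-2$, and the only ``leaves'' available in $X$ are among the finitely many valence-$\le 1$ points (all in $B$), so an unbounded amount of branching must be accompanied by an unbounded number of independent cycles. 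The real work is to make this global and quantitative — in particular to build the $Y$ so that all their leaves sit at points of $B$, following each arc of $X$ until it closes up or reaches $B$, despite the components of $X\setminus B$ not being compact; a point of $B$ meeting infinitely many arcs is handled the same way, since infinitely many arc-germs at $b$ must either pair into infinitely many cycles or terminate at infinitely many leaves.

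Granting this, set $\Sigma=B\cup\mathcal{S}$; it is finite, and each of its points meets only finitely many arcs of $X$. Then $X\setminus\Sigma$ is a $1$-manifold without boundary, and the closure in $X$ of any component $C$ of it adds at most two points of $\Sigma$, so $\overline C$ is either an arc with endpoints in $\Sigma$ (possibly a loop) or a circle disjoint from $\Sigma$; a circle of the latter type is a clopen connected component of $X$, so there are at most $\dim H_1(X;\ZZ_2)$ of them. Since each arc-component has its two ends among the finitely many points of $\Sigma$, and each such point receives only finitely many arc-ends, there are only finitely many arc-components as well. Together with the finitely many points of $\Sigma$ isolated in $X$, this exhibits $X$ as a finite set of vertices joined by finitely many arcs, together with finitely many disjoint circles — that is, as a finite network.
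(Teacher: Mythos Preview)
Your approach differs substantially from the paper's, which argues by induction on $d_1(X)$. The base case $d_1(X)=0$ is the heart of it: each component $T$ of $X\setminus B$ is then a tree (a loop would force $d_1(X)>0$), and since distinct ends of $T$ must limit to distinct points of $B$ (two ends with the same limit would again create a loop in $\overline T\subset X$), the tree has at most $|B|$ ends and so is finite; a pigeonhole on the sets $\overline T\setminus T\subseteq B$ then bounds the number of components. For the inductive step one locates a loop, deletes one edge $E$ from it, enlarges $B$ to $B\cup\partial E$, and notes that $d_1(X\setminus E)<d_1(X)$.

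Your direct route can be made to work, but two steps need more than you have given them. First, the injectivity $H_1(Y;\ZZ_2)\hookrightarrow H_1(X;\ZZ_2)$ for your finite subgraphs $Y$ is not a formality: nothing in the hypotheses controls the local structure of $X$ at points of $B$, and in a general compact Hausdorff space an embedded circle can certainly bound. What saves you is that $X$ has covering dimension at most~$1$ (it is locally a finite graph off the finite set $B$), and you should invoke this explicitly. The paper leans on the same principle, but only in the spare forms ``a simple closed curve in $X$ is nonzero in $H_1(X;\ZZ_2)$'' and ``deleting an edge of a loop lowers $d_1$'', and the inductive wrapper keeps those uses localized. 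Second, ``follow each arc until it closes up or reaches $B$'' presupposes that an edge of the graph $X\setminus B$ which leaves every compactum actually converges to a single point of $B$; this is true (the limit set of the corresponding proper ray is a nonempty compact connected subset of $\overline T\setminus T\subseteq B$, hence a single point), but it is exactly the kind of statement that must be checked, since rays in compact Hausdorff spaces can have complicated limit sets in general. The inductive argument earns its keep by replacing the hand-assembly of your $Y$'s with a single edge-deletion; your approach would give a more explicit bound on the number of branch points, at the cost of more bookkeeping.
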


\begin{proof}
Define the set $V=V(X,\set)$ of vertices by
\[
V:= \set \cup \{p\in X\setminus \set : v(p)\ne 2\},
\]
and let $\Ee=\Ee(X,Q)$ be the set of connected components of $X\setminus V$.
Note that each element $E$ of $\Ee$ is an embedded curve and that $\overline{E}\setminus E\subset V$.
Elements of $\Ee$ are called {\bf edges}.  The assertion of the theorem is that $V$ and $\Ee$ are finite sets. 
We prove the theorem by induction on $d_1(X)$.

Suppose first that $d_1(X)=0$.  Let $T$ be a connected component of $X\setminus \set$.
Then $T$ is a connected network with no closed loops.  Thus $T$ is a tree.
Since $d_1(X)=0$, $d_1(\overline{T})=0$, and thus no two ends of $T$ can limit to the same point in $\set$.
Thus $T$ has at most $|\set|$ ends.  It follows that $\overline{T}$ is a finite tree.
Now $\overline{T}\setminus T$ is a subset of the finite set $\set$.
Thus if $X\setminus \set$ had infinitely many components, then it would have two components $T$ and $T'$ with
\[
  \overline{T}\setminus T = \overline{T'}\setminus T'.
\]
But then $T\cup T'$ would contain a loop, which is impossible since $d_1(X)=0$.
This completes the proof in the case $d_1(X)=0$.

Now suppose that $d_1(X)>0$.
Then $X$ contains a closed loop.  Let $E\in \Ee$ be an edge in that loop. Then
\[
  d_1(X\setminus E)< d_1(X).
\]
Let
\begin{align*}
\set'&=\set \cup \partial E, \\
X' &= X\setminus E. \\
\end{align*}
Then $X'$ and $\set'$ satisfy the hypotheses of the theorem and $d_1(X')<d_1(X)$, so (by induction),
$V(X',\set')$ and $\Ee(X',\set')$ are finite.  Consequently $V(X,\set)$ and $\Ee(X,\set)$ are also finite.
\end{proof}

The following counting theorem for arbitrary finite networks was used in the proof
of Theorem~\ref{slice-theorem} (The Slice Theorem).

\begin{theorem}[Counting Theorem]\label{counting-theorem}
Let $X$ be a finite network, and $V_n$ be the set
of vertices of valence $n$.  Then
\[
    \sum_{n\ge 3} \frac12(n-2) |V_n|  = \frac12 |V_1| +  d_1(X) -  d_0(X\setminus V_0),
\]
where $d_i(\cdot):=\dim H_i(\cdot;\ZZ_2)$.
\end{theorem}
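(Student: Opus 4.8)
The plan is to reduce the Counting Theorem to the Euler-characteristic identity already recorded in Lemma~\ref{euler-lemma}. Since a finite network is a finite $1$-dimensional complex, its homology vanishes in degrees $\ge 2$, so over $\ZZ_2$ one has $\chi(X) = d_0(X) - d_1(X)$. On the other hand, choosing as vertex set of $X$ all points of valence $\ne 2$ (any valence-$2$ point may or may not be included; it contributes $\frac12(2-2)=0$ to every sum below, so the choice is immaterial), Lemma~\ref{euler-lemma} gives
\[
  \chi(X) = \sum_n \frac12(2-n)\,|V_n| = |V_0| + \frac12|V_1| - \sum_{n\ge 3}\frac12(n-2)\,|V_n|.
\]

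Next I would peel off the isolated points. A vertex of valence $0$ is by definition an isolated point of $X$, so $X$ is the topological disjoint union of the finite set $V_0$ and the subspace $X\setminus V_0$. Hence $d_0(X) = |V_0| + d_0(X\setminus V_0)$, while $d_1(X)$ is unaffected (isolated points carry no $1$-cycles). Substituting $\chi(X) = d_0(X) - d_1(X) = |V_0| + d_0(X\setminus V_0) - d_1(X)$ into the displayed identity and cancelling the common term $|V_0|$ gives
\[
  d_0(X\setminus V_0) - d_1(X) = \frac12|V_1| - \sum_{n\ge 3}\frac12(n-2)\,|V_n|,
\]
which rearranges to the asserted formula
\[
  \sum_{n\ge 3}\frac12(n-2)\,|V_n| = \frac12|V_1| + d_1(X) - d_0(X\setminus V_0).
\]

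There is no serious obstacle here; the only point deserving a word is the well-definedness of the count, i.e., independence of the decomposition of $X$ into vertices and edges. This is immediate because valences $0$ and $1$ and all valences $\ge 3$ are intrinsic to the point (they force it to be a vertex in any decomposition), while inserting or deleting a valence-$2$ vertex changes neither side of the identity. The degenerate case of a component that is a topological circle --- no vertices at all --- is checked directly: it contributes $0$ to the left side, and makes $d_1(X)$ and $d_0(X\setminus V_0)$ each increase by $1$, so it contributes $1-1=0$ to the right side as well.
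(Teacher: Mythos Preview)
Your proof is correct and follows essentially the same approach as the paper: compute $\chi(X)$ two ways, once via Lemma~\ref{euler-lemma} and once as $d_0(X)-d_1(X)=|V_0|+d_0(X\setminus V_0)-d_1(X)$, then cancel $|V_0|$ and rearrange. Your additional remarks on independence of the vertex set and the circle-component case are fine but not needed for the argument.
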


(Note that $d_0(X\setminus V_0)$ is the number of connected components of $X$
that are not isolated points.)

\begin{proof}
Note that
\[
  \chi(X) = \sum_n \frac12(2-n) \,|V_n| = |V_0| + \frac12|V_1| + \frac12 \sum_{n\ge 3} (2-n)\,|V_n|
\]
(by Lemma~\ref{euler-lemma})
and 
\[
  \chi(X) = d_0(X) - d_1(X) = d_0(X\setminus V_0) + |V_0| - d_1(X).
\]
The assertion follows immediately.
\end{proof}

The Slice Theorem (Theorem~\ref{slice-theorem})
has the following important consequence:

\begin{theorem}\label{boundary-is-rado-theorem}
Suppose that $I\subset \RR$ is an open interval (possibly all of $\RR$) and that
 $F:M\to I$ is a proper continuous function such that $F$ is Rad\'o on the interior of $M$.
Suppose also that 
\begin{enumerate}
\item $d_1(M)<\infty$, 
\item for each $t$, $(\partial M)\cap\{F=t\}$ is finite, 
\item the set of interior local maxima and interior local minima of $F$ is finite.
\end{enumerate}
Then $F$ is a Rad\'o function on all of $M$.
\end{theorem}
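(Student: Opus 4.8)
The statement to prove is Theorem~\ref{boundary-is-rado-theorem}: that a proper continuous $F:M\to I$ which is Rad\'o on the interior of $M$, and which satisfies the three finiteness hypotheses, is in fact Rad\'o on all of $M$. The only thing that needs checking is the Rad\'o condition at boundary points $p\in\partial M$, since by hypothesis it already holds at interior points. So fix $p\in\partial M$, set $t=F(p)$, and consider the level set $X=F^{-1}(t)$.

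The plan is to apply the Slice Theorem (Theorem~\ref{slice-theorem}, with the set $S$ empty since $F$ is Rad\'o on all of $M\setminus\partial M$). First I would arrange to work in a small neighborhood of $p$: by properness, $X$ is compact; combined with hypotheses (1)--(3) of the present theorem, the hypotheses of the Slice Theorem are met, so $X$ is a finite network. In particular $X$ has a well-defined local structure near $p$: there is a neighborhood $U$ of $p$ in $M$ such that $X\cap U$ consists of finitely many embedded arcs meeting only at $p$. Shrinking $U$, I can assume $U$ is homeomorphic to a half-disk with $p$ on the diameter, that $X\cap U$ contains no other boundary point of $M$ (using that $(\partial M)\cap\{F=t\}$ is finite, hence discrete), and that $U$ contains no interior local maxima or minima of $F$ other than possibly... actually none, since $p$ is a boundary point. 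Then I must verify the five conditions in Definition~\ref{rado-definition}: (1) finiteness of the collection of arcs — from the finite-network conclusion; (2) each arc joins $p$ to $\partial U$ — by construction of $U$ small enough that $X$ has no vertices in the interior of $U$ other than $p$, so each component of $(X\cap U)\setminus\{p\}$ is an edge running out to $\partial U$; (3) the arcs meet only at $p$ — same reasoning; (5) each arc minus $p$ lies in the interior of $M$ — because $X\cap U$ meets $\partial M$ only at $p$.

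The main obstacle is condition~(4): each arc $C_i$ must lie in the closure of $\{F>t\}$ and in the closure of $\{F<t\}$. This is the "crossing" condition and it does not follow formally from $X$ being a network; one could imagine an arc of $X$ that has $\{F\geq t\}$ on both sides locally (a "one-sided" level arc, which would make $p$ a false boundary saddle). To rule this out I would argue as follows: let $C_i$ be one of the arcs, and consider an interior point $q\in C_i\setminus\{p\}$ (which exists and lies in the interior of $M$ by condition~(5)). Since $F$ is Rad\'o at $q$ and $q$ has valence exactly $2$ there — the arc passes straight through, as $q$ is not a vertex of the network in our chosen $U$ — the Rad\'o condition at $q$ gives that the arc through $q$ is in the closure of $\{F>t\}$ and of $\{F<t\}$. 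Now I propagate this along $C_i$ toward $p$: the set of points of $C_i\setminus\{p\}$ at which the two-sided crossing holds is both open (local condition near a valence-$2$ interior point) and closed in $C_i\setminus\{p\}$ (a limit of points where $F$ takes values $>t$ arbitrarily nearby still has values $>t$ arbitrarily nearby), and nonempty, hence is all of $C_i\setminus\{p\}$; taking closures, $p\in\overline{C_i\setminus\{p\}}\subset\overline{\{F>t\}}\cap\overline{\{F<t\}}$, and since $C_i$ itself is covered by these closures, condition~(4) holds.

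Assembling these, the neighborhood $U$ witnesses that $F$ satisfies Definition~\ref{rado-definition} at $p$. Since $p\in\partial M$ was arbitrary and the interior case is given, $F$ is Rad\'o on all of $M$. The one point I would be careful about in writing this out is the choice of $U$: I need $U$ small enough that the finite-network structure of $X$ has all its relevant vertices (points of valence $\neq 2$, plus the finitely many interior extrema and the finitely many boundary points of $\partial M$ on $X$) either equal to $p$ or outside $\overline{U}$, which is possible precisely because all of those are finite sets and hence locally finite — this is exactly where hypotheses (1)--(3) are used, via the Slice Theorem.
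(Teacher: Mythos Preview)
Your proposal is correct and follows exactly the route the paper intends: the paper states Theorem~\ref{boundary-is-rado-theorem} as an immediate consequence of the Slice Theorem (Theorem~\ref{slice-theorem}) and gives no further proof, so what you have written is precisely the fleshed-out argument. Your verification of condition~(4) is slightly more elaborate than necessary---once $U$ is chosen so that every point of $C_i\setminus\{p\}$ is an interior valence-$2$ point, the Rad\'o crossing condition at each such point already places it in both closures, and taking closures handles $p$; the open/closed propagation is redundant but harmless.
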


The Slice Theorem also implies an interesting removal-of-singularities theorem:

\begin{theorem}\label{removal-theorem}
Suppose that $I\subset \RR$ is an open interval (possibly all of $\RR$),  that
 $F:M\to I$ is a proper continuous function, and that $S\subset M\setminus \partial M$ is a finite set
such that $F$ is Rad\'o on $M\setminus (S\cup \partial M)$.
Suppose also that 
\begin{enumerate}
\item $d_1(M)<\infty$, 
\item for each $t$, $(\partial M)\cap\{F=t\}$ is finite, 
\item the set of interior local maxima and interior local minima of $F$ is finite.
\end{enumerate}
Then $F$ is a Rad\'o function on all of $M$.
\end{theorem}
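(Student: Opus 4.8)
The plan is to reduce the verification of Definition~\ref{rado-definition} at an arbitrary point of $M$ to the Slice Theorem together with the hypothesis that $F$ is already Radó away from $S\cup\partial M$. Fix $p\in M$, put $t:=F(p)$ and $X:=F^{-1}(t)$. Since $F$ is proper and $\{t\}$ is compact, $X$ is compact; combined with hypotheses (1)--(3) above and the standing assumptions that $F$ is continuous with only finitely many interior local extrema and is Radó on $M\setminus(S\cup\partial M)$, this is exactly the situation of the Slice Theorem (Theorem~\ref{slice-theorem}), whose first conclusion gives that $X$ is a finite network. If $p$ is an interior point and $p\notin S$, then $F$ is Radó at $p$ by hypothesis and there is nothing to prove; so we may assume $p\in S\cup\partial M$.

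Next I would extract from the finite-network structure of $X$ a neighborhood of $p$ of the type demanded by Definition~\ref{rado-definition}. Since $X$ is a finite network, $p$ has arbitrarily small neighborhoods $U$ in $M$ --- a disk if $p\in M\setminus\partial M$, a half-disk if $p\in\partial M$ --- such that $U\cap X$ is precisely the cone on a finite set at $p$: a union of $v:=v(X,p)$ embedded arcs $C_1,\dots,C_v$, each joining $p$ to a point of $\partial U$, pairwise meeting only at $p$ (when $v=0$ this just says $U\cap X=\{p\}$). Shrinking $U$ further --- using that $(\partial M)\cap X$ is finite and that $S$ and the set of interior local extrema of $F$ are finite --- I may also require that $(\partial M)\cap X\cap U\subseteq\{p\}$ and that $U$ contains no point of $S$ and no interior local extremum of $F$ except possibly $p$. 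The containment $(\partial M)\cap X\cap U\subseteq\{p\}$ forces $C_i\setminus\{p\}\subset M\setminus\partial M$, so conditions (1), (2), (3) and (5) of Definition~\ref{rado-definition} hold at $p$ (vacuously when $v=0$, in which case $p$ is a strict local extremum of $F$).

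It remains to check the crossing condition (4): each $C_i$ lies in $\overline{\{F>t\}}\cap\overline{\{F<t\}}$. The arcs $C_1,\dots,C_v$ divide $U$ into finitely many open sectors; on each sector $W$ one has $F\neq t$ (because $W$ is disjoint from $X$, and meets $\partial M$, if at all, only where $F\neq t$, since $(\partial M)\cap X\cap U\subseteq\{p\}$), and $W$ is connected, so $F-t$ has constant sign on $W$. Each $C_i$ borders exactly two of these sectors, say $W'$ and $W''$, with $C_i\subset\overline{W'}\cap\overline{W''}$. To see that $W'$ and $W''$ carry opposite signs, pick $q$ in the relative interior of $C_i$ with $q\notin S$; such $q$ exists because $C_i\setminus(\{p\}\cup S)$ is the arc $C_i$ with only finitely many points deleted. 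Then $q$ is an interior point of $M$ outside $S$, so $F$ is Radó at $q$; and the level set of $F$ through $q$ is locally just the arc $C_i$, so $v(F,q)=2$, i.e.\ $q$ is a Radó regular point. Applying the crossing condition at $q$ shows that the two local sides of $C_i$ at $q$ --- which coincide with $W'$ and $W''$ --- lie in $\overline{\{F>t\}}$ and $\overline{\{F<t\}}$ respectively. Hence $C_i\subset\overline{W'}\cap\overline{W''}\subset\overline{\{F>t\}}\cap\overline{\{F<t\}}$, establishing (4). As $p\in M$ was arbitrary, $F$ is Radó on all of $M$.

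The step I expect to require the most care is the bookkeeping in the middle paragraph: choosing $U$ small enough that $U\cap X$ is exactly the cone at $p$ while simultaneously avoiding the finitely many points of $S$, the finitely many interior local extrema, and all of $(\partial M)\cap X$ except $p$; and, in the last paragraph, the elementary but essential observation that the finitely many ``bad'' points in $S$ cannot fill up the arc $C_i$, so a genuine Radó-regular point on $C_i$ is always available to transport the crossing condition back to $p$. Everything else is the standard local topology of finite networks and the constancy of $\sign(F-t)$ on connected complementary regions. Note that the case $S=\emptyset$ of this argument is precisely a proof of Theorem~\ref{boundary-is-rado-theorem}.
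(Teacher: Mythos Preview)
Your proof is correct and follows exactly the approach the paper intends: the paper gives no explicit proof of Theorem~\ref{removal-theorem} (or of Theorem~\ref{boundary-is-rado-theorem}), merely asserting that both are consequences of the Slice Theorem, and your argument supplies precisely the details of that deduction. Your final remark that the case $S=\emptyset$ recovers Theorem~\ref{boundary-is-rado-theorem} is also in line with the paper's organization.
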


In minimal surface theory, one sometimes encounters functions that are Rad\'o
on the interior of the surface, but that are constant on some arcs and/or some connected components of the boundary.
No such function can be Rad\'o on the whole surface.  The following theorem lets one get around
that difficulty in many situations.

\begin{theorem}\label{interval-theorem}
Suppose that $I\subset \RR$ is an open interval (possibly all of $\RR$) and that
   $F:M\to \RR$ is a proper function that is Rad\'o on the interior of $M$.
Suppose also that
\begin{enumerate}
\item $d_1(M)<\infty$.
\item The set of interior local minima and interior local maxima of $F$ is finite.
\item There are only finitely many connected components of $\partial M$ on which $F$ is constant.
\item For each $t$, $(\partial M)\cap F^{-1}(t)$ is the union of finitely many connected components.
\end{enumerate}
Define an equivalent relation $\sim$ on $M$ as follows: $p\sim q$ if and only if $p=q$ or $p$ and $q$
belong to a connected subset of $\partial M$ on which $F$ is constant.
Let $\tilde M$ be the 	quotient $M/\sim$ and let $\tilde F$ be the function on $\tilde M$ corresponding to $F$.

Then $\tilde F: \tilde M\to \RR$ is a proper Rad\'o function.
\end{theorem}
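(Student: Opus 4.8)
The plan is to exhibit $\tilde M$ as a $2$-manifold-with-boundary carrying a proper continuous $\tilde F$ that is \emph{automatically} Rad\'o except possibly at the finitely many interior points coming from boundary circles of $M$ on which $F$ is constant, and then to quote the removal-of-singularities result (Theorem~\ref{removal-theorem}) to promote this to Rad\'o-ness everywhere.

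First I would analyze the $\sim$-classes. A class with more than one point is a maximal connected subset $E\subset\partial M$ on which $F$ is constant; since $F$ is proper, $F^{-1}(F(E))$ is compact, so $E$ is compact and hence is either a compact arc or an entire circle component of $\partial M$, and by hypothesis~(3) only finitely many of the latter occur. I would then check that $\tilde M=M/\!\sim$ is a (locally compact, Hausdorff, second countable) $2$-manifold-with-boundary: the decomposition of $M$ into $\sim$-classes has compact classes whose diameters tend to $0$ wherever they accumulate, hence is upper semicontinuous (the saturation of a closed set is closed), so $\tilde M$ is metrizable; locally, collapsing the disjoint (diameter-to-zero) family of constancy arcs in a half-disk returns a half-disk — one sees this from the model map $(x,y)\mapsto(g_y(x),y)$ with $g_0$ a monotone surjection crushing the arcs and $g_y$, $y>0$, homeomorphisms converging to it — so this stage produces a surface $M'$ homeomorphic to $M$, and collapsing each of the finitely many $F$-constant boundary circles then replaces that circle by an interior point. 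Along the way I would record that $\pi$ restricts to an open embedding of $M\setminus\partial M$, that $\tilde M\setminus\partial\tilde M=\pi(M\setminus\partial M)\sqcup\tilde S$ with $\tilde S$ the finite set of points produced by the collapsed circles, and that $\tilde F$ is well defined and continuous (universal property, $F$ constant on classes) and proper (since $\tilde F^{-1}(K)=\pi(F^{-1}(K))$ is the continuous image of a compact set).

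Next I would verify the hypotheses of Theorem~\ref{removal-theorem} for $(\tilde M,\tilde F,\tilde S)$. On $\tilde M\setminus(\tilde S\cup\partial\tilde M)=\pi(M\setminus\partial M)$, $\pi$ is a homeomorphism identifying $\tilde F$ with the Rad\'o function $F|_{M\setminus\partial M}$, so $\tilde F$ is Rad\'o there. For the three numbered conditions: $d_1(\tilde M)\le d_1(M)<\infty$, because arc-collapsing preserves the homeomorphism type and capping a boundary circle cannot raise $\dim H_1(\,\cdot\,;\ZZ_2)$ (the homology of the capped surface is a quotient of the original's, via the long exact sequence of the pair), so hypothesis~(1) applies; for each $t$, $(\partial\tilde M)\cap\tilde F^{-1}(t)$ is contained in the $\pi$-image of $(\partial M)\cap F^{-1}(t)$, which by hypothesis~(4) has finitely many connected components, each a point or a constancy arc, hence each mapping to a single point; and every interior local extremum of $\tilde F$ lies either in $\pi(M\setminus\partial M)$, where it corresponds under the homeomorphism $\pi$ to an interior local extremum of $F$ (finitely many, by hypothesis~(2)), or in the finite set $\tilde S$. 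Theorem~\ref{removal-theorem} then yields that $\tilde F$ is Rad\'o on all of $\tilde M$, which with the properness above is the assertion.

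I expect the genuine difficulty to be concentrated in the point-set step: confirming that $M/\!\sim$ is really a $2$-manifold-with-boundary when the constancy arcs accumulate (upper semicontinuity of the decomposition, together with the explicit local model showing that collapsing a diameter-to-zero family of boundary arcs returns a homeomorphic surface). Everything downstream is bookkeeping, precisely because Theorem~\ref{removal-theorem} is built to supply Rad\'o-ness at the finitely many interior points over the collapsed circles with no direct examination of the level sets $\{\tilde F=c\}$ there.
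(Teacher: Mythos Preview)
Your proposal is correct and follows essentially the same route as the paper: identify the finite set $\tilde S$ of interior points of $\tilde M$ arising from boundary circles on which $F$ is constant, observe that $\tilde F$ is Rad\'o on $\tilde M\setminus(\tilde S\cup\partial\tilde M)\cong M\setminus\partial M$, and then invoke Theorem~\ref{removal-theorem}. You supply considerably more detail on the point-set topology (upper semicontinuity of the decomposition, the manifold structure of the quotient, and the verification of the numbered hypotheses of Theorem~\ref{removal-theorem}) than the paper does, but the architecture is the same.
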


\begin{proof}
Let $\Gamma$ be a connected component of $(\partial M)\cap F^{-1}(t)$.
If $\Gamma$ is a closed curve, then $\Gamma$ becomes an interior point $p$ of $\tilde M$.
If $\Gamma$ is not a closed curve, then $\Gamma$ becomes a boundary point $p$ of $\tilde M$.  In either case,
  $\tilde F(p)=t$.

Now let $S$ be the set of interior points 
in $\tilde M$ that correspond to closed curves in $\partial M$ along which $F$ is constant.

Then $\tilde F: \tilde M\to I$ and $S$ satisfy the hypotheses of Theorem~\ref{removal-theorem},
so $\tilde F$ is a Rad\'o function.
\end{proof}

\section{Branched Minimal Surfaces}\label{branch-section}

\begin{theorem}\label{branch-theorem}
Suppose that $N$ is a smooth Riemannian manifold and that
$F:M\to \RR$ is a continuous function such that
\begin{enumerate}
\item Each level set is a smooth minimal surface if $\dim N=3$.
\item Each level set is totally geodesic if $\dim N>3$.
\item For each $t$, the level set $F^{-1}(t)$ is in the closure of $\{F>t\}$ and of $\{F<t\}$.
\end{enumerate}
Suppose that $M$ is a connected surface without boundary, that $u:M\to N$ is a branched minimal immersion,
and that $F\circ u$ is not constant.
Then $F\circ u$ is a Rad\'o function without local minima or local maxima.
\end{theorem}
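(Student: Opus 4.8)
The plan is to verify the local conditions of Definition~\ref{rado-definition} for $g:=F\circ u$ at each point of $M$, together with the absence of interior local extrema; since $M$ has no boundary, only the ``interior'' part of the definition is relevant. The branch points of $u$ form a discrete set $B\subset M$, so there are really two tasks: the immersed points $M\setminus B$, where the unbranched theory applies, and the isolated branch points.

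First I would record a preliminary reduction: $g$ is nowhere locally constant. Indeed, if $g$ were constant on a nonempty open set $W$, then $u(W)$ would lie in a single level set $\Sigma_0=F^{-1}(g(W))$, a smooth totally geodesic (minimal, if $\dim N=3$) surface; since $u$ is a harmonic map, unique continuation forces $u(M)\subset\Sigma_0$, hence $g$ constant on the connected surface $M$, contrary to hypothesis. Granting this, the immersed case is immediate: near $p\in M\setminus B$ choose a neighborhood $U$ on which $u$ is an embedding; then $u(U)$ is an embedded minimal surface, $F$ is nonconstant on it, and by Theorem~\ref{minimal-surface-theorem} (and Remark~\ref{higher-codimension-remark} when $\dim N>3$) the function $F|u(U)$ is Rad\'o with no interior local maximum or minimum. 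Transporting through the homeomorphism $u|U$ shows $g$ is Rad\'o at $p$ without local extremum. Thus $g$ is already a Rad\'o function on $M\setminus B$ with no local extrema there, and it remains to analyze $g$ at a branch point $p$.

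At $p\in B$, set $q=u(p)$, $t_0=g(p)$, and $\Sigma=F^{-1}(t_0)$. I would fix a conformal coordinate $z$ on a disk $D\ni p$ with $z(p)=0$ and $D\cap B=\{p\}$, and normal coordinates on $N$ at $q$, so as to use the standard normal form for a branch point of order $k\ge 1$: after an ambient rotation, $u_1+iu_2=z^{k+1}+o(|z|^{k+1})$ and $u_j=o(|z|^{k+1})$ for $j\ge 3$, so the ``generalized tangent plane'' of $u$ at $q$ is the $y_1y_2$-plane $T$, while $\Sigma$ is a graph of a second-order function over $P:=T_q\Sigma$. The set to understand is $\{g=t_0\}\cap D=u^{-1}(\Sigma)\cap D$. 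When $P\ne T$ and $\dim N=3$, $P\cap T$ is a line, and the equation $u(z)\in\Sigma$ reads, to leading order, that $z^{k+1}$ lies on that line under the identification $T\cong\CC$; this has exactly $2(k+1)$ ray solutions, and --- using the crossing hypothesis to identify $\{F>t_0\}$ and $\{F<t_0\}$ with the two local sides of $\Sigma$, exactly as in the proof of Theorem~\ref{minimal-surface-theorem} --- a perturbation argument upgrades this to: on a smaller disk, $u^{-1}(\Sigma)$ is $2(k+1)$ embedded arcs from $p$ to the boundary, meeting only at $p$ and alternating sides, so Definition~\ref{rado-definition} holds and $p$ is a saddle, not an extremum. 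The case $\dim N>3$ is handled the same way with the line replaced by the appropriate flat subspace and with the intersection theory of branched minimal surfaces and totally geodesic surfaces (cf.\ Remark~\ref{higher-codimension-remark}).

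The remaining and hardest subcase is $P=T$, where the leading terms cancel. Here I would first rule out that $g-t_0$ has a sign near $p$: if, say, $g\ge t_0$ on $D$, then $u(D)$ is a branched minimal surface lying weakly on one side of the minimal hypersurface $\Sigma$ (resp.\ contained in $\{F\ge t_0\}$ and touching the totally geodesic surface $\Sigma$ at $q$), so the strong maximum principle --- in its branch-point form --- forces $u(D)\subset\Sigma$ near $q$; then $g\equiv t_0$ near $p$, and by the preliminary reduction $g$ is globally constant, a contradiction. Hence $g-t_0$ changes sign near $p$, so $p$ is not a local extremum. Finally, matching the first nonvanishing terms of $u$ against the graphical description of $\Sigma$ over $T$ --- precisely as in the proof of Theorem~\ref{minimal-surface-theorem} when $M$ contacts $\{F=t_0\}$ to order $\ge 2$, but now carrying along the branching --- shows $u^{-1}(\Sigma)\cap D'$ is a finite union of embedded arcs through $p$, pairwise meeting only at $p$, each in the closure of $\{g>t_0\}$ and of $\{g<t_0\}$, with an even number of arcs. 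Combining all cases, $g=F\circ u$ satisfies Definition~\ref{rado-definition} at every point and has no interior local extremum. I expect the main obstacle to be exactly this tangential subcase: neither the unbranched intersection theorem (since $du_p=0$) nor the leading-order normal form suffices, so one must control $u^{-1}(\Sigma)$ via the higher-order expansion and invoke the maximum principle and unique continuation in their branch-point versions.
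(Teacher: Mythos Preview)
Your outline is broadly the paper's for $n=3$: localize, use the branch-point normal form $u(z)=(z^Q,f(z))$, split into the transverse case $T_q\Sigma\ne T$ (where the leading term $z^Q$ gives $2Q$ rays) and the tangential case $T_q\Sigma=T$. The genuine gap is in the tangential subcase. ``Matching the first nonvanishing terms'' and invoking the maximum principle is not enough to conclude that $u^{-1}(\Sigma)$ is a finite union of arcs meeting only at $p$: for that one needs not just a leading term but a \emph{harmonic} homogeneous polynomial leading term together with $C^1$ control of the remainder, i.e.\ an expansion
\[
f(z)-\phi(z^Q)=h(z)+o(|z|^d),\qquad D\bigl(f(z)-\phi(z^Q)\bigr)=Dh(z)+o(|z|^{d-1}),
\]
with $h$ nonzero harmonic of degree $d\ge Q$. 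This is exactly what the paper extracts from~\cite{micallef-white}*{1.6}; once one has it, the level-set structure and the absence of extrema follow simultaneously, and your separate maximum-principle step becomes unnecessary. Without such an expansion, a smooth function can vanish to finite order with a zero set that is not a finite union of arcs, so this analytic input is the heart of the matter, not a detail.

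For $\dim N>3$ the paper takes a rather different and cleaner route than your sketch: it does not use the branch-point normal form at all. Instead it chooses Fermi coordinates over $\Sigma$ so that $y^n$ is signed distance to $\Sigma$, uses total geodesicity to get $|\Delta u^n|\le K|u^n|$, and applies Hartman--Wintner directly to $u^n$ to obtain the harmonic polynomial expansion above. This handles the transverse and tangential cases uniformly and avoids the issue you flagged (that $T\cap P$ need not be a line when $n>3$).
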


Of course if $M$ is minimal surface with boundary in $N$, then we can apply Theorem~\ref{branch-theorem} to conclude
that $F$ is Rad\'o on the interior of $M$, and then we can conclude from Theorem~\ref{boundary-is-rado-theorem} that $F\circ u$ 
is Rad\'o on all of $M$, provided the hypotheses of Theorem~\ref{boundary-is-rado-theorem} are satisfied.

\begin{proof}[Proof for $n=3$]
Since the result is local, it suffices to consider the case 
\begin{align*}
&u: \BB\subset \CC \to \RR^3, \\
&u(0)=0,
\end{align*}
where $\RR^3$ is endowed with a Riemannian metric
$g$ such that $g_{ij}(0)=\delta_{ij}$ and $Dg_{ij}(0)=0$.
By rotating, we can assume that, after a non-conformal reparametrization,
\begin{equation}\label{branch-form}
   u(z) = (z^Q, f(z)) \in \CC\times \RR \cong \RR^3,
\end{equation}
for some positive integer $Q$, where $f(z)= O(|z|^{Q+1})$.
See \cite{micallef-white}*{Theorem~1.4}.  

Now let $\Sigma$ be the level set of $F$ passing through the point $0=u(0)$.
If $\Tan(\Sigma,0)$ is not the horizontal plane $\RR^2\times \{0\}$, then the desired behavior follows
easily from~\eqref{branch-form}.  Indeed, in this case, minimality of $\Sigma$ is not even needed.  
Note that in this case the valence of the point is $v(F\circ u, 0) = 2Q$ and thus $w(F\circ u, 0) = Q-1$.

Now suppose that $\Tan(\Sigma,0)$ is the horizontal plane $\RR^2\times \{0\}$.
Then, near the origin, $\Sigma$ is the graph of a function $\phi:\Omega\subset \RR^2\to \RR$
with $\phi(0)=0$ and $D\phi(0)=0$.

Note that
\[
   z \mapsto (z^Q, \phi(z^Q))
\]
is a nonconformal reparametrization of a branched minimal immersion. (If $Q>1$, then $0$ is a false branch point of order $Q-1$).

Now consider the map
\begin{equation}\label{the-difference}
   z\in D \mapsto f(z) - \phi(z^Q).
\end{equation}
If this were identically $0$, then $F\circ u$ would be constant on a neighorhood of $0$ and thus on all of $M$ by unique continuation,
contrary to the hypotheses of the theorem.
Thus the function~\eqref{the-difference} is not constant.  By~\cite{micallef-white}*{1.6},
 there is a nonzero homogenious polynomal $h$ of degree
 $d\ge Q$ such that
\begin{align*}
&f(z) - \phi(z^Q) = h(z) + o(|z|^d), \\
&D(f(z) - \phi(z^Q)) = Dh(z) + o (|z|^{d-1}).
\end{align*}
The desired behavior near $0$ of the level set of $F\circ u$ through $0$ follows immediately.
Note that in this case, $v(F\circ u,0)=2d \ge 2Q$ and hence $w(F\circ u, 0)\ge Q-1$.
\end{proof}

\begin{proof}[Proof for $n>3$]
The result is local, so we may assume that the branched immersion is
\[
   u: D\subset \RR^2 \to N.
\]
We wish to prove that the level set of $F\circ u$ through the origin has the behavior specified in the definition of Rad\'o function.
Let $\Sigma$ be the level set of $F$ through $p=u(0)$.
Choose Fermi-type local coordinates on $N$ as follows.
First, let $(y^1, \dots, y^{n-1})$ be normal coordinates on $\Sigma$ at $p$.
For $q$ in $N$ near $p$, let $y^n(q)$ be the signed distance from $q$ to $\Sigma$, and
for $i<n$, let $y^i(q)=y^i(q')$ where $q'$ is the point in $\Sigma$ closest to $q$.

Thus
\begin{align*}
g_{ni} &= \delta_{ni}  \quad (i\le n), \\
g_{ij}(0)&=\delta_{ij}, \\
Dg_{ij}(0) &= 0.
\end{align*}

Now $u$ is a conformal harmonic map.  Harmonicity means that
\[
   \pdf{}{x^k} \left( g_{i \alpha}(u(x)) \pdf{u^i}{x^k} \right) - (D_\alpha g_{ij}(u(x))  \pdf{u^i}{x^k} \pdf{u^j}{x^k}  = 0 
\]
for each $\alpha = 1, \dots, n$.
Here, $k$ is summed from $1$ to $2$ and the other repeated indices from $1$ to $n$.
In particular, this holds for $\alpha=n$:
\begin{equation}\label{totally-geodesic-equation}
\begin{aligned}
0 
&=
\pdf{}{x^k} \left( g_{in}(u(x)) \pdf{u^i}{x^k} \right) 
     - (D_n g_{ij}(u(x))  \pdf{u^i}{x^k} \pdf{u^j}{x^k}  \\
&=
\Delta u^n  + (D_n g_{ij}(u(x))  \pdf{u^i}{x^k} \pdf{u^j}{x^k}.
\end{aligned}
\end{equation}
since $g_{ni}\equiv \delta_{ni}$.

If $i$ and/or $j$ is $n$, then $g_{ij}$ is constant, so $D_n g_{ij}\equiv 0$.
On the other hand, if $i$ and $j$ are less than $n$, then 
\[
    D_n g_{ij}(y)=0  \quad \text{when $y^n=0$}
\]
since $\Sigma$ is totally geodesic, and therefore
\[
    |D_n g_{ij}(y)| \le c\, |y^n|
\]
for $|y|\le r$ and for some constant $c=c_r$.  Thus from~\eqref{totally-geodesic-equation}, we see that
\begin{equation}\label{laplacian-bound}
  |\Delta u^n| \le K |u^n|.
\end{equation}
By hypothesis, $F\circ u$ is not constant.  Thus by~\eqref{laplacian-bound} and the
 Hartman-Wintner Theorem (as formulated in~\cite{micallef-white}*{Theorem~1.1}),
  there is a nonzero homogeneous harmonic polynomial $h$
of degree $d\ge 1$ such that
\begin{align*}
    u^n(z) &= h(z) + o(|z|^d), \\
    Du^n(z) &= Dh(z) + o(|z|^{d-1}).
\end{align*}
The assertion follows immediately.

Note that the valence $v(F\circ u, 0)$ is $2d$ and thus $w(F\circ u, 0) = d-1$.
We remark that if $u$ has branch point of order $Q-1$ at the origin, then 
\[
   \lim_{z\to 0} \frac{|u(z)|}{|z|^Q}
\]
 exists and is nonzero.
Thus  $d\le Q$.  Note that $d > Q$ if only if $u^n = 0(|u|) = o(|z|^Q)$, 
i.e., if and only if the the tangent plane to $u(D)$ at $0$ is contained in $\Tan(\Sigma,u(0))$. 
\end{proof}

Here we summarize the facts about saddle-multiplicity and branch point order that were established in proving Theorem~\ref{branch-theorem}:

\begin{theorem}\label{branch-multiplicity-theorem}
Suppose that $u:M\to N$ and $F:N\to \RR$ are as in Theorem~\ref{branch-theorem}.
If $p$ is not a branch point, then $w(F\circ u, p)$ is the order of contact of $u(M)$ and $\{F=F(p)\}$
at $u(p)$
Now suppose that $p$ is a branch point of order $m$.
Then
\begin{equation}\label{saddle-branching-inequality}
   w(F\circ u, p) \ge m.
\end{equation}
Equality holds if the tangent plane to $u(M)$ at $u(p)$ and the tangent plane to $\{F=F(u(p))\}$ at $u(p)$
are transverse.

In case the level sets of $F$ are totally geodesic (as they are if $n>3$), equality holds in~\eqref{saddle-branching-inequality}
  if and only if the tangent planes are transverse.
\end{theorem}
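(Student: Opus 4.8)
The plan is to read the assertions off from the proof of Theorem~\ref{branch-theorem}, supplemented by Theorem~\ref{minimal-surface-theorem} at non-branch points and by one degree comparison for the equality statement; essentially nothing new is needed. First I would dispose of the case that $p$ is not a branch point. There $u$ is an immersion near $p$, hence a homeomorphism of a small neighborhood of $p$ onto an embedded piece of a minimal surface (a surface whose level sets are totally geodesic, if $\dim N>3$) through $u(p)$. Since valence is invariant under homeomorphisms of the domain, $v(F\circ u,p)=v(F|u(M),u(p))$ and therefore $w(F\circ u,p)=w(F|u(M),u(p))$; by Theorem~\ref{minimal-surface-theorem} (and Remark~\ref{higher-codimension-remark} when $\dim N>3$) this equals the order of contact of $u(M)$ with $\{F=F(u(p))\}$ at $u(p)$, which is the first assertion.

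Now suppose $p$ is a branch point of order $m=Q-1$, and write $\Sigma=\{F=F(u(p))\}$. When $\dim N=3$ I would quote the two subcases of the first proof of Theorem~\ref{branch-theorem}, carried out in the normal form $u(z)=(z^Q,f(z))$ with $f(z)=O(|z|^{Q+1})$. If $\Tan(u(M),u(p))$ is transverse to $\Sigma$ at $u(p)$ (equivalently, $\Tan(\Sigma,u(p))$ is not the horizontal plane), that argument gives $v(F\circ u,p)=2Q$, so $w(F\circ u,p)=Q-1=m$. If the two tangent planes coincide, then writing $\Sigma$ near $u(p)$ as a graph $\phi$ over the horizontal plane, one gets $f(z)-\phi(z^Q)=h(z)+o(|z|^{d})$ for a nonzero homogeneous polynomial $h$ of degree $d\ge Q$, so $v(F\circ u,p)=2d$ and $w(F\circ u,p)=d-1\ge m$. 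Combining the subcases gives \eqref{saddle-branching-inequality} together with equality in the transverse case. When $\dim N>3$, the second proof of Theorem~\ref{branch-theorem} already furnishes Fermi coordinates about $\Sigma$, the bound $|\Delta u^n|\le K|u^n|$, and hence (Hartman--Wintner) a nonzero homogeneous harmonic polynomial $h$ of degree $d\ge1$ with $u^n(z)=h(z)+o(|z|^{d})$; thus $v(F\circ u,p)=2d$, $w(F\circ u,p)=d-1$, and since $d\ge Q$ (the branch point having order $Q-1$) again \eqref{saddle-branching-inequality} holds, with equality precisely when $d=Q$.

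It remains to characterize, in the totally geodesic case, when $d=Q$, and this is the one point where a small argument is needed beyond quoting earlier proofs. I would write the leading term of $u$ at the branch point as a nonzero $\RR^n$-valued homogeneous polynomial $P$ of degree $Q$, so that $\Tan(u(M),u(p))$ is the image $2$-plane of $P$ and $u^n(z)=P^n(z)+o(|z|^{Q})$. If $P^n\not\equiv0$, then on a ray where $P^n\ne0$ one has $u^n\sim P^n r^Q$, which pins the Hartman--Wintner degree to $d=Q$ (it can be neither smaller nor larger without contradicting this asymptotic), so $w(F\circ u,p)=m$; if $P^n\equiv0$, then $u^n=o(|z|^Q)$, so $d>Q$ and $w(F\circ u,p)>m$. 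Since $P^n\equiv0$ means exactly that $\Tan(u(M),u(p))\subset\Tan(\Sigma,u(p))$, which by a dimension count is the same as the two tangent planes failing to be transverse, this gives the stated equivalence. The main obstacle is thus entirely contained in this last step --- matching the vanishing order of $u^n$ with the branch order $Q$, and recognizing $P^n\equiv0$ as non-transversality of the tangent planes --- while everything else is a direct appeal to the proofs of Theorems~\ref{branch-theorem} and~\ref{minimal-surface-theorem}.
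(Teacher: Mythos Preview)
Your proposal is correct and follows essentially the same approach as the paper: the paper's proof of Theorem~\ref{branch-multiplicity-theorem} is simply the sentence ``Here we summarize the facts about saddle-multiplicity and branch point order that were established in proving Theorem~\ref{branch-theorem},'' and you have correctly located and restated those facts, including the final characterization of $d=Q$ in terms of transversality, which appears verbatim as the closing remark of the paper's $n>3$ proof. The only difference is cosmetic: you present the degree comparison $d\ge Q$ and the equivalence $d>Q\Leftrightarrow P^n\equiv 0$ as a short separate argument, whereas the paper folds it into the end of the proof of Theorem~\ref{branch-theorem}.
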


\begin{corollary}\label{branch-tame-corollary}
The function $F\circ u$ is a tame Rad\'o function.
\end{corollary}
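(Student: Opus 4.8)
The plan is to check the two defining conditions of tameness (Definition~\ref{tame-definition}) directly at each interior regular point of $F\circ u$, using the fact that such a point cannot be a branch point, so that near it $u$ is an honest immersion and the whole question reduces to ordinary transversality of $u(M)$ with the (smooth) level sets of $F$. To set up: by Theorem~\ref{branch-theorem}, $F\circ u$ is a Rad\'o function with no interior local maxima or minima, so its interior regular points are exactly the interior points $p$ with $v(F\circ u,p)=2$, i.e.\ with $w(F\circ u,p)=0$. By Theorem~\ref{branch-multiplicity-theorem}, a branch point of order $m$ satisfies $w(F\circ u,p)\ge m\ge 1$; hence an interior regular point $p$ is not a branch point, and there the equality $w(F\circ u,p)=0$ says precisely that $u(M)$ meets the level set $\Sigma:=\{F=F(u(p))\}$ transversally at $u(p)$. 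Since both conditions in Definition~\ref{tame-definition} are local near interior regular points, it suffices to produce, for each such $p$, a neighborhood consisting entirely of interior regular points on which $\Tan(F\circ u,\cdot)$ exists and is continuous.

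Fix an interior regular point $p$. As $p$ is not a branch point, choose a disk $D\ni p$ in the interior of $M$ so small that $u|D$ is an embedding; then $u(D)$ is a smooth embedded minimal surface, and $F$ is nonconstant on $u(D)$ (otherwise $F\circ u$ would be constant on an open set, hence on all of the connected surface $M$ by unique continuation, contrary to hypothesis). For $q\in D$ put $\Sigma_q:=\{F=F(u(q))\}$. Now $u(D)$ is transverse to $\Sigma_q$ at $u(q)$ if and only if the $2$-plane $Du_q(T_qM)$ and the tangent hyperplane of $\Sigma_q$ at $u(q)$ together span $T_{u(q)}N$. The field $q\mapsto Du_q(T_qM)$ is continuous (indeed smooth) on $D$, and the field assigning to a point $z\in N$ the tangent hyperplane of the level set of $F$ through $z$ is continuous on $N$ --- this is the one genuinely analytic ingredient, and it follows from the Harnack inequality applied to the level hypersurfaces of $F$ (minimal when $\dim N=3$, totally geodesic when $\dim N>3$), exactly as in the sketch in Remark~\ref{lipshitz-remark} and in the omitted proof of Theorem~\ref{minimal-tame-theorem}. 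Since transversality is an open condition and holds at $q=p$ (where $\Sigma_p=\Sigma$), it persists on a smaller disk $D'\ni p$. Thus every point of $D'$ is an interior regular point of $F\circ u$, which proves that the interior regular set is open. (When $\dim N=3$ one may instead simply apply Theorem~\ref{minimal-tame-theorem} to the embedded minimal surface $u(D')$.)

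It remains to identify and check continuity of the tangent line on $D'$. For $q\in D'$, transversality of $u(D')$ and $\Sigma_q$ together with the dimension count $2+(\dim N-1)-\dim N=1$ shows that $u(D')\cap\Sigma_q$ is, near $u(q)$, a smooth curve with tangent line $L(q):=Du_q(T_qM)\cap T_{u(q)}\Sigma_q$. Because $u|D'$ is a diffeomorphism onto its image and $\{F\circ u=F(u(q))\}\cap D'=(u|D')^{-1}\bigl(u(D')\cap\Sigma_q\bigr)$, the level set $\{F\circ u=F(u(q))\}$ is near $q$ a smooth curve and $\Tan(F\circ u,q)=(Du_q)^{-1}L(q)$. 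Every operation here --- the maps $q\mapsto Du_q(T_qM)$ and $q\mapsto T_{u(q)}\Sigma_q$, intersection of a $2$-plane with a transverse hyperplane, and preimage under the invertible linear map $Du_q$ --- is continuous on $D'$, so $\Tan(F\circ u,\cdot)$ exists and is continuous on $D'$. Letting $p$ range over all interior regular points yields both conditions of Definition~\ref{tame-definition}; hence $F\circ u$ is a tame Rad\'o function (and it is a minimal Rad\'o function as well, having no interior local extrema).

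I expect the main --- essentially the only --- obstacle to be the continuity of the level-set tangent-hyperplane field of $F$, the same Harnack-type fact that underlies the omitted proof of Theorem~\ref{minimal-tame-theorem}; once it is granted, the remainder is a routine transversality argument, and the difficulty one might fear from branch points disappears because every interior regular point of $F\circ u$ is automatically a non-branch point of $u$.
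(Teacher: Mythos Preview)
Your proof is correct and follows essentially the same route as the paper: the key observation is that, by Theorem~\ref{branch-multiplicity-theorem}, every branch point is a critical point of $F\circ u$, so near any interior regular point $u$ is an honest immersion and one is back in the embedded situation covered by Theorem~\ref{minimal-tame-theorem}. The paper dispatches this in two sentences by citing the embedded case; you unpack the underlying transversality argument explicitly, but the reduction is identical.
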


\begin{proof}
Tameness is a local property of the regular points of a Rad\'o function.
Since all branch points are critical points, tameness of $F\circ u$ follows
from the embedded case (Theorem~\ref{minimal-surface-theorem}).
\end{proof}

\appendix

\section{Some basic topological facts}

\begin{lemma}\label{annulus-lemma}
Suppose that $C_i$, $i=1, \dots, k$ are disjoint simple closed curves in an open annulus $U$ and that each $C_i$ is homotopically
nontrivial in $U$.  Then each component of $U\setminus (\cup_i C_i)$ is an annulus.
\end{lemma}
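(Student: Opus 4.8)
The plan is to reduce to the case of a single curve using the Jordan--Schoenflies theorem and then induct on $k$. Identify the open annulus $U$ with $\RR^2\setminus\{0\}$ (both are homeomorphic to $S^1\times\RR$). A simple closed curve $C$ in $U$ is in particular a simple closed curve in $\RR^2$, so by the Jordan curve theorem and the Schoenflies theorem it bounds a closed topological disk $\Delta\subset\RR^2$, and $\RR^2\setminus C$ has exactly two components: the open disk $\operatorname{int}\Delta$ and the unbounded component $\operatorname{ext}\Delta$ (homeomorphic to the complement of a closed disk). Since $C$ is homotopically nontrivial in $U=\RR^2\setminus\{0\}$, the origin cannot lie in $\operatorname{ext}\Delta$ --- otherwise $\Delta\subset U$ and $C$ would bound a disk in $U$. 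Hence $0\in\operatorname{int}\Delta$, so the two components of $U\setminus C$ are the punctured disk $\operatorname{int}\Delta\setminus\{0\}$ and $\operatorname{ext}\Delta$, each of which is homeomorphic to an open annulus. This settles the case $k=1$.

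For the inductive step, assume the statement for fewer than $k$ curves and let $C_1,\dots,C_k$ be as in the hypothesis. Applying the $k=1$ analysis to $C_k$ splits $U$ into two open annuli $U'=\operatorname{int}\Delta_k\setminus\{0\}$ and $U''=\operatorname{ext}\Delta_k$. Each $C_i$ with $i<k$ is connected and disjoint from $C_k$, hence lies entirely in $U'$ or entirely in $U''$; moreover it remains homotopically nontrivial there, since a null-homotopy of $C_i$ inside $U'$ or $U''$ would be a null-homotopy inside $U$. Applying the inductive hypothesis to the families $\{C_i : C_i\subset U'\}$ and $\{C_i : C_i\subset U''\}$ (each a disjoint family of fewer than $k$ homotopically nontrivial simple closed curves in an open annulus), every component of $U'\setminus\bigcup_{C_i\subset U'}C_i$ and of $U''\setminus\bigcup_{C_i\subset U''}C_i$ is an annulus. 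Since
\[
U\setminus\bigcup_{i=1}^{k}C_i \;=\; \Bigl(U'\setminus\bigcup_{C_i\subset U'}C_i\Bigr)\sqcup\Bigl(U''\setminus\bigcup_{C_i\subset U''}C_i\Bigr),
\]
and $U'$, $U''$ are disjoint open sets, every component of the left-hand side is a component of one of the two pieces on the right, hence an annulus.

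The only real content is the case $k=1$ --- that an essential simple closed curve cuts the annulus into two annuli --- and within that, the one point requiring care is locating the origin on the correct side of the Schoenflies disk, which is exactly what the hypothesis that $C$ is homotopically nontrivial provides; the rest is bookkeeping about components. An alternative to the Jordan--Schoenflies argument is to pass to the universal cover $\RR\times(0,1)$, where the full preimage of each $C_i$ is a properly embedded, $\ZZ$-invariant line and the complementary regions are evidently simply connected strips or half-planes whose quotients are annuli; but this route needs the fact that an essential simple closed curve in the annulus represents a generator of $\pi_1$, which is of comparable difficulty to the $k=1$ case above.
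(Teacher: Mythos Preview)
Your proof is correct and follows exactly the approach the paper indicates: the paper's entire proof is the sentence ``The proof is a simple induction,'' and you have supplied the details of that induction, including a clean treatment of the base case via Jordan--Schoenflies on $\RR^2\setminus\{0\}$. Nothing further is needed.
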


The proof is a simple induction.

Recall that if $F:\Sigma\to \RR$ and if $s< t$,
we let $\Sigma[s] = F^{-1}(s)$ and $\Sigma[s,t] = F^{-1}([s,t])$.
\[
\]

\begin{proposition}\label{annulus-proposition}
Let $\Sigma$ be a compact, connected $2$-manifold with boundary and let
\[
  F:\Sigma\to [a,b]
\]
be a continuous function such that 
\begin{enumerate}
\item Each level set $\Sigma[t]$ is a finite union of disjoint simple closed curves,
\item  $F$ has no interior local maxima or minima,
\item $\partial \Sigma = \Sigma[a] \cup \Sigma[b]$.
\end{enumerate}
Then $\Sigma$ is  an annulus.
\end{proposition}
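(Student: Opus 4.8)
The plan is to analyze the topology of $\Sigma$ by studying how its level sets and sublevel sets evolve as the parameter moves from $a$ to $b$. Since $\Sigma$ is compact, $F$ is a continuous function whose level sets are finite unions of simple closed curves and which has no interior local extrema; the only extrema occur on $\partial\Sigma = \Sigma[a]\cup\Sigma[b]$. First I would observe that, away from a finite set of ``critical'' parameter values, the level set $\Sigma[t]$ has locally constant topology: indeed, at a parameter $t_0\in(a,b)$ where no topological change occurs, a neighborhood $F^{-1}((t_0-\eps,t_0+\eps))$ is a product $\Sigma[t_0]\times(t_0-\eps,t_0+\eps)$. This is precisely the content of Lemma \ref{noncritical-lemma} (or of Corollary \ref{annulus-corollary} in the Appendix for the no-boundary case, applied to the interior): on an interval of regular values, $F^{-1}(I)\cong \Sigma[t]\times I$. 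The first task is therefore to show that, under hypotheses (1)--(3), there are in fact \emph{no} interior critical values at all, so that $\Sigma(a,b)\cong \Sigma[t]\times(a,b)$ for any single regular value $t$.

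To see that there are no interior critical values, suppose $t_0\in(a,b)$ is one. Since $F$ has no interior local maxima or minima, near any point $p$ of $\Sigma[t_0]$ the function $F$ takes values both above and below $t_0$ in every neighborhood, so no component of $\Sigma[t_0]$ can be ``born'' or ``die'' at $t_0$ the way a local extremum would force. The remaining possibilities for a topological change are a saddle-type event — two circles merging into one, or one splitting into two, or a circle pinching — but each of these produces, in the level set $\Sigma[t_0]$, a point that is not a manifold point (a figure-eight or similar), contradicting hypothesis (1) that $\Sigma[t_0]$ is a disjoint union of \emph{simple closed curves}. (Here one uses that $\Sigma$ is a surface, so the only way the level-set topology can jump is through such a singular level.) Hence every $t_0\in(a,b)$ is a regular value, and by Lemma \ref{noncritical-lemma}, $\Sigma(a,b)$ is homeomorphic to $N\times(a,b)$, where $N=\Sigma[t]$ is a fixed disjoint union of, say, $m$ circles.

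Next I would identify $m$ and glue in the boundary. Since $\Sigma(a,b)\cong (\sqcup^m S^1)\times(a,b)$ and $\Sigma$ is \emph{connected}, we must have $m=1$, so $\Sigma(a,b)$ is an open annulus $S^1\times(a,b)$. Now $\Sigma$ is obtained from this open annulus by adjoining $\Sigma[a]$ and $\Sigma[b]$, each of which, by hypothesis (3) and (1), is a nonempty finite disjoint union of circles forming the whole boundary $\partial\Sigma$. A standard collar/product-neighborhood argument near $\partial\Sigma$ (or a direct application of Lemma \ref{noncritical-lemma} on half-open intervals $[a,a')$ and $(b',b]$, using that $F$ has no interior extrema and the level sets near the ends are unions of circles) shows that a neighborhood of $\Sigma[a]$ in $\Sigma$ is a product $\Sigma[a]\times[a,a')$ and similarly for $b$; matching these with the single circle $S^1\times\{t\}$ forces $\Sigma[a]$ and $\Sigma[b]$ to each be a single circle. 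Therefore $\Sigma$ is a compact connected surface with boundary two circles, whose interior fibers as $S^1\times(a,b)$ over $(a,b)$ via $F$; gluing the two boundary circles to the two ends of $S^1\times(a,b)$ yields $\Sigma\cong S^1\times[a,b]$, an annulus.

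The main obstacle is the middle step: ruling out interior critical values rigorously. The clean way to do it is to argue by contradiction at a putative critical value $t_0$ using that a change in the number or the homeomorphism type of the components of $\Sigma[t]$ as $t$ crosses $t_0$ must, on a surface, be witnessed by a non-manifold point in some level set near $t_0$ — and one has to be a little careful, since a priori the ``critical set'' could be more complicated than at an isolated Morse point. One can sidestep this by instead invoking Lemma \ref{noncritical-lemma} in the contrapositive form already proved in the paper: the hypothesis there is exactly ``no critical points of a Rad\'o function in $M(a,b)$,'' and conditions (1)--(2) of the Proposition, together with the absence of interior extrema, are designed to guarantee that $F$ restricted to the interior is a Rad\'o function all of whose interior points have valence $2$ (a point where two simple-closed-curve level sets would cross, forcing valence $\geq 4$, is excluded by hypothesis (1)). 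With that identification in hand, Lemma \ref{noncritical-lemma} applies directly and the rest is bookkeeping.
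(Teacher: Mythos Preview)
Your argument is circular. You invoke Lemma~\ref{noncritical-lemma} (and explicitly mention Corollary~\ref{annulus-corollary}) to obtain the product structure $\Sigma(a,b)\cong \Sigma[t]\times(a,b)$. But in the paper, the proof of Lemma~\ref{noncritical-lemma} consists of the single sentence ``If $M$ has no boundary, this is Corollary~\ref{annulus-corollary} in the Appendix,'' and Corollary~\ref{annulus-corollary} is deduced from Proposition~\ref{annulus-proposition} itself. So the lemma you are citing rests on the very proposition you are trying to prove. Your ``sidestep'' in the last paragraph, where you try to avoid the delicate analysis of critical values by appealing to Lemma~\ref{noncritical-lemma} ``already proved in the paper,'' is therefore not available.

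Your heuristic that a change in the topology of $\Sigma[t]$ would force a non-manifold point (a figure-eight) in some level set is the right intuition, and you are right that hypotheses (1)--(2) make every interior point a valence-$2$ point of a Rad\'o function. The real content of the proposition is exactly to turn that intuition into a product structure without already having one. The paper does this directly and self-containedly: for each $T$ it takes an annular neighborhood $U_T$ of the disjoint circles $\Sigma[T]$, observes that no collection of level curves can bound a region (else $F$ would have an interior extremum), so every boundary curve of a component $K$ of $\Sigma[s,t]$ lying in $U_T$ is essential in $U_T$; then Lemma~\ref{annulus-lemma} forces $K$ to be an annulus with one boundary circle at each level. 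A compactness argument on $[a,b]$ then glues finitely many such annular slabs to give $\Sigma$. If you want to rescue your approach, you would need to supply an independent proof of the product structure for valence-$2$ Rad\'o functions --- which is essentially what the paper's argument accomplishes.
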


\begin{proof}
Note that 
\begin{equation}\label{no-region}
\text{No collection of curves in $\Sigma[t]$ can bound a region in $\Sigma$.} \tag{*}
\end{equation}
For if there were such a region $D$, then $F|\overline{D}$ would attain its maximum and/or its minimum at an interior point of $D$, violating~[2].

\begin{claim}\label{annulus-claim}
Let $a\le s< t\le b$.  Let $K$ be a connected component of $\Sigma[s,t]$.
If $K$ lies in an annular region $U$ of $\Sigma$, then $K$ is an annulus, with one boundary component in $\Sigma[s]$ and one
in $\Sigma[t]$.
\end{claim}

\begin{proof}
Note that $\partial K$ is contained in $\Sigma[s]\cup \Sigma[t]$.
By~\eqref{no-region}, it must have at least one component in $\Sigma[s]$ and at least one component in $\Sigma[t]$.
If $C$ is a component of $\partial K$, it cannot bound a region in $\Sigma$ by~\eqref{no-region}. 
In particular, it does not bound a disk in $U$. 
Thus each component of $\partial K$ is homotopically nontrivial in $U$.  By Lemma~\ref{annulus-lemma}, $K$ is an annulus.
Thus we have proved Claim~\ref{annulus-claim}.
\end{proof}

Note for each $T\in [a,b]$, there is a relatively open subset $U_T$ of $\Sigma$ containing $\Sigma[T]$
such that $U$ contains $\Sigma[T]$ and such that each component of $U$ is an annulus.
Now $F(\Sigma\setminus U)$ is a compact subset of $[a,b]$ that does not contain $T$.
Thus there is an open interval $I_T\subset \RR$ containing $T$ and disjoint from $F(\Sigma\setminus U_T)$.

Since the $\{I_T: T\in [a,b]\}$ form an open cover of $I$, there exists $a_0=a< a_1 < a_2 < \dots < a_n=b$
such that each $[a_{i-1},a_i]$ belongs to some $I_T$.

By Claim~\ref{annulus-claim}, each component of $\Sigma[a_{i-1},a_i]$ is an annulus with one component in $\Sigma[a_{i-1}]$
and one component in $\Sigma[a_i]$.   Proposition~\ref{annulus-proposition} follows immediately.
\end{proof}

\begin{corollary}\label{annulus-corollary}
Suppose that 
\begin{enumerate}
\item $\Sigma$ is an open surface.
\item $F:\Sigma\to (a,b)$ is a continuous function with no local maxima or local minima.
\item For each $t\in (a,b)$, $\Sigma[t]$ is a union of finitely many disjoint simple closed curves.
\item If $a<s< t<b$, then $\Sigma[s,t]$ is compact and 
\[
  \partial \Sigma[s,t] = \Sigma[s] \cup \Sigma[t].
\]
\end{enumerate}
Then each connected component of $\Sigma$ is an annulus.
\end{corollary}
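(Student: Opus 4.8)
The plan is to apply Proposition~\ref{annulus-proposition} to the compact slabs $\Sigma[s,t]$ with $a<s<t<b$ and then exhaust $\Sigma$ as $s\downarrow a$, $t\uparrow b$. Since components of $\Sigma$ may be treated one at a time, fix a connected component $\Sigma_0$ of $\Sigma$. The first step is to show that every connected component $K$ of $\Sigma[s,t]$ is an annulus. Such a $K$ is a compact connected $2$-manifold-with-boundary; its level sets, being subsets of those of $\Sigma$, are finite unions of disjoint simple closed curves; and $F|K$ has no interior local extrema, since an interior point $p$ of $K$ has $F(p)\in(s,t)$ and therefore a $\Sigma$-open neighbourhood contained in $K$, so an interior local extremum of $F|K$ would be one of $F$ on $\Sigma$. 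As $F|K$ is nonconstant (its domain is $2$-dimensional, its level sets $1$-dimensional) and $\partial K\subset\partial\Sigma[s,t]=\Sigma[s]\cup\Sigma[t]$ by hypothesis, the extrema of $F|K$ are attained on $\partial K$, forcing $\min_K F=s$, $\max_K F=t$ and $\partial K=K[s]\cup K[t]$. Proposition~\ref{annulus-proposition} now gives that $K$ is an annulus. Moreover, since $F$ has no interior local extrema, no union of level-$t'$ curves bounds a region in $\Sigma$; hence for $t'\in(s,t)$ the (nonempty) level set $K[t']$ is a single essential simple closed curve in $K$, because two disjoint essential circles in an annulus cobound a subannulus on whose closure $F$ would attain an interior extremum.

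Next I would transfer this to $\Sigma_0$. Fix $t_0\in F(\Sigma_0)$. Applying the first step to slabs $\Sigma[s,t]$ with $s<t_0<t$, and noting that each component of $\Sigma_0\cap\Sigma[s,t]$ is a component of $\Sigma[s,t]$ (as $\Sigma_0$ is open and closed in $\Sigma$), hence an annulus with one boundary circle at level $s$ and one at level $t$, shows that $F(\Sigma_0)=(a,b)$ and that for every $t'\in(a,b)$ the level set $\Sigma_0[t']$ is a disjoint union of exactly $c$ simple closed curves, where $c$ does not depend on $t'$. Now pick strictly monotone sequences $s_n\downarrow a$, $t_n\uparrow b$ and set $A_n:=\Sigma_0\cap\Sigma[s_n,t_n]$; each $A_n$ is a disjoint union of $c$ compact annuli, $A_n\subset\interior(A_{n+1})$, and $\bigcup_n A_n=\Sigma_0$. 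Every annular component of $A_{n+1}$ meets $A_n$ (its $F$-image contains $[s_n,t_n]$), so the inclusion induces a surjection, hence a bijection, $\pi_0(A_n)\to\pi_0(A_{n+1})$; labelling components compatibly, $\Sigma_0=\bigsqcup_{j=1}^{c}\bigcup_n K_n^{\,j}$. Each $\bigcup_n K_n^{\,j}$ is open in $\Sigma_0$: a point of $K_n^{\,j}$ lies in the $\Sigma$-open manifold interior of $K_n^{\,j}$, or, if it lies at level $s_n$ or $t_n$, in that of $K_{n+1}^{\,j}$. Connectedness of $\Sigma_0$ therefore forces $c=1$, so each $A_n$ is a single compact annulus.

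Finally, $\Sigma_0$ is exhausted by compact annuli $A_1\subset\interior(A_2)\subset A_2\subset\cdots$, the core of each $A_n$ being the essential circle $\Sigma_0[t_0]$. Inside the annulus $A_{n+1}$, the two boundary circles of $A_n$ are disjoint essential circles, so $A_{n+1}\setminus\interior(A_n)$ consists of exactly two compact annuli, each joining a boundary circle of $A_n$ to one of $A_{n+1}$. Assembling a homeomorphism $\Sigma_0\to S^1\times\RR$ inductively from these annular increments is then routine surface topology, and exhibits $\Sigma_0$ as an open annulus.

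The step I expect to be the main obstacle is the bookkeeping in the middle paragraph — showing the component count $c$ is constant along the exhaustion and that connectedness of $\Sigma_0$ collapses it to $1$ — since this rests on the openness of the pieces $\bigcup_n K_n^{\,j}$ and on the $\pi_0$-bijection, both of which in turn depend on having chosen $s_n$, $t_n$ strictly monotone so that $A_n\subset\interior(A_{n+1})$.
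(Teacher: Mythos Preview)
Your argument is correct and follows essentially the same approach as the paper: apply Proposition~\ref{annulus-proposition} to slabs $\Sigma[s,t]$ and then exhaust. The paper uses consecutive slabs $\Sigma[a_{i-1},a_i]$ for a bi-infinite sequence $a_i$ and, after noting that each component is an annulus with one boundary circle at each level, simply declares that the corollary ``follows immediately''; you instead use nested expanding slabs $\Sigma[s_n,t_n]$ about a fixed level $t_0$ and spell out the gluing and the connectedness argument that collapses the component count to $c=1$. This is exactly the content the paper suppresses, so your version is a faithful elaboration rather than a different proof.
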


\begin{proof}
Let $a_i, \, i\in \ZZ$ be a strictly increasing sequence with $\lim_{i\to -\infty}a_i=a$ and $\lim_{i\to\infty}a_i=b$.
By Proposition~\ref{annulus-proposition}, each component of $\Sigma[a_{i-1}, a_i]$ is an annulus with one component in $\Sigma[a_{i-1}]$
and one component in $\Sigma[a_i]$. Corollary~\ref{annulus-corollary} follows immediately.
\end{proof}

Let $n\ge 3$.
We define an {\bf $n$-ad} $K$ to be a closed set consisting of $(n+1)$ points $p_0, p_1, \dots, p_n$, together with $n$ embedded arcs 
 $\gamma_1, \dots, \gamma_n$, where each $\gamma_i$ joins $p_0$ to $p_i$, and where $\gamma_i\cap\gamma_j=\{p_0\}$
 for $i\ne j$.  
We say that $p_0$ is the {\bf center} of the $n$-ad,
and that the points $p_1,\dots, p_n$ are the {\bf boundary} $\partial K$ of the $n$-ad.

\begin{lemma}\label{n-ad-lemma}
Let $M$ be a $2$-manifold and let $n\ge 3$.
Suppose $C$ is a collection of disjoint subsets of $M$, each of which contains an $n$-ad.
Then $C$ is countable.
\end{lemma}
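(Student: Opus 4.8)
The plan is to reduce to a statement about the plane by working locally, and then use the fact that the plane is second countable (has a countable base). First I would cover $M$ by countably many open sets $U_1, U_2, \dots$, each homeomorphic to an open subset of $\RR^2$; this is possible since $M$ is a $2$-manifold, hence second countable (or, if one does not want to assume second countability, one restricts attention to a single chart at a time and notes that a collection of disjoint sets each meeting a fixed separable open set in a ``large'' way is countable — but it is cleanest to just take $M$ second countable, which is the standing convention for manifolds). It suffices to show that for each $i$, only countably many members of $C$ can have their center $p_0$ lying in $U_i$; since every $n$-ad has exactly one center and every member of $C$ contains an $n$-ad, this will show $C$ is a countable union of countable sets.

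So fix a chart $U = U_i$, identified with an open subset of $\RR^2$, and let $C_i$ be the sub-collection of members of $C$ whose $n$-ad has center in $U$. For each such member, choose a rational-coordinate point $q$ close to the center $p_0$ and small rational radius $\rho$ so that the closed disk $\overline{B}(q,\rho) \subset U$ contains $p_0$ in its interior; moreover, since $n \ge 3$ and the $n$ arcs $\gamma_1,\dots,\gamma_n$ emanate from $p_0$ and are otherwise disjoint, we may shrink $\rho$ so that the circle $\partial B(q,\rho)$ meets the $n$-ad in at least $3$ points (indeed each $\gamma_j$, joining $p_0$ inside the disk to $p_j$, which we may assume is outside — after possibly shrinking $\rho$ further — must cross the circle). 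The key point is that the disk $\overline{B}(q,\rho)$ together with the ``trace'' left by the $n$-ad certifies membership in a way that distinct members cannot share: if two members of $C_i$ chose the \emph{same} pair $(q,\rho)$, then their two $n$-ads, being disjoint, would each meet the circle $\partial B(q,\rho)$ in at least $3$ points, giving at least $6$ points, but more to the point each contributes a connected piece separating the circle — here one invokes Lemma~\ref{n-ad-lemma}'s planar analogue that a Jordan-type separation forces a contradiction. The cleanest route: each $n$-ad inside the disk with $n \ge 3$ boundary points on the circle divides the disk into at least $3$ regions; two disjoint such configurations in the same disk is impossible because each must have its center in the interior of a region cut out by the other, and the arcs would be forced to cross. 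Thus the map sending a member of $C_i$ to its chosen pair $(q,\rho) \in \QQ^2 \times \QQ^+$ is injective, so $C_i$ is countable.

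The main obstacle is the combinatorial-topological step showing that two disjoint $n$-ads (with $n\ge 3$) cannot both sit inside a small disk with all three-plus endpoints on the bounding circle: this is where planarity is essential (the statement is false for, say, a graph that is not planar). I expect to handle it by a Jordan curve / Euler characteristic argument: one $n$-ad with $j \ge 3$ points on the circle, together with the $j$ circular arcs between consecutive such points, forms a planar graph whose complement in the disk has at least $j \ge 3$ faces; the center and arcs of the second $n$-ad must lie in a single face (by disjointness and connectedness of the second $n$-ad minus its circle-points), but that face is a disk meeting the circle in a single arc, so the second $n$-ad can have at most one endpoint on $\partial B(q,\rho)$, contradicting that it has at least $3$. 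This contradiction yields the desired injectivity and completes the proof.
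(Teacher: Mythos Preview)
Your reduction to a single disk is fine and matches the paper's setup, but the injectivity claim at the heart of your argument is false. Two disjoint $n$-ads \emph{can} coexist in the same closed disk with all endpoints on the bounding circle: take $K$ with center at the origin and three radial arcs to $e^{i0}$, $e^{2\pi i/3}$, $e^{4\pi i/3}$, and take $K'$ with center at $\tfrac12 e^{i\pi/3}$ and three arcs to $e^{i\pi/6}$, $e^{i\pi/3}$, $e^{i\pi/2}$ drawn inside the sector between angles $0$ and $2\pi/3$. These are disjoint. Your argument correctly shows that $K'$ must lie in a single face of the decomposition determined by $K$, and that this face meets the circle in a single arc; but the conclusion ``so the second $n$-ad can have at most one endpoint on $\partial B(q,\rho)$'' does not follow. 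All of $K'$'s endpoints simply lie on that one arc---there is no contradiction. Hence your map to rational pairs $(q,\rho)$ is not injective, and the proof breaks.

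The paper repairs exactly this step with a separability argument rather than an injectivity argument. Having trapped uncountably many disjoint $n$-ads $K'$ in a single closed disk $\overline D$ with endpoints on $\partial D$, it maps each to its $n$-tuple of endpoints in $(\partial D)^n$. Since $(\partial D)^n$ is separable, some sequence $\Phi(K_i)$ converges to $\Phi(K)$ with $K_i\ne K$. Now your own observation kicks in: each $K_i$ lies in one face of $\overline D\setminus K$, so $\partial K_i$ lies in a single arc $A$ of $(\partial D)\setminus \partial K$; since $n\ge 3$, there is a point of $\partial K$ not in $\overline A$, and the endpoints of $K_i$ stay bounded away from it, contradicting $\Phi(K_i)\to\Phi(K)$. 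So the missing idea is that you cannot forbid two disjoint $n$-ads in one disk, but you \emph{can} forbid uncountably many, and the mechanism is convergence of endpoint tuples, not uniqueness.
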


\begin{proof}
Suppose to the contrary that there is an uncountable collection $C$.
We can assume that each set in $C$ is an $n$-ad. (Otherwise, replace each set in $C$ by an $n$-ad that it contains.)
Consider a countable collection $\Dd$ of open disks $D\subset M$ such that $\overline{D}$ is a closed disk and such
that $\Dd$ is a basis for the topology of $M$.
For $D\in \Dd$, let $C_D$ be the collection of $K\in C$ such that the center of $K$ is in $D$ and such that the boundary points
of $K$ are not in $D$.  Note that there is a $D\in \Dd$ for which $C_D$ is uncountable.  For each $K\in C_D$, let $K'$ be the closure
of $K\cap D$.  Then $K'$ is an $n$-ad with center in $D$ and with 
boundary in $\partial D$.  Let
\[
  C_D' = \{K': K \in C_D\}.
\]
Then $C_D'$ is uncountable.  Define $\Phi: C_D'\to  (\partial D)^n$ by
\[
  \Phi(K) = (p_1, \dots, p_n),
\]
where $p_1, \dots, p_n$ are the boundary points of $K$.  (We choose an ordering of the endpoints.)  Since $C_D'$ is uncountable and since $(\partial D)^n$ is separable, there exist $K_i$ ($i\in N$) and $K$
in $C_D'$ such that $\Phi(K_i)\to \Phi(K)$.
But that is impossible since each $K_i$ lies in one of the connected components of $\overline{D}\setminus K$.

(If the last sentence is not clear, note that $\partial K_i$ must lie in an arc $A$ of $(\partial D)\setminus \partial K$.
Since $\partial K$ has $n\ge 3$ points, it has a point $p$ that is not in $\overline{A}$.  Since $\partial K_i\subset A$, the points
of $\partial K_i$ are bounded away from $p$.)
\end{proof}

\begin{corollary}\label{n-ad-corollary}
Suppose $F:M\to \RR$ is a Rado function. Then there are only countably many critical points, and hence only countably many critical 
values.
\end{corollary}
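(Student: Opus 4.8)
The plan is to partition the set of critical points of $F$ by valence into three parts and bound each, with Lemma~\ref{n-ad-lemma} doing the essential work only on the part consisting of points of valence at least $3$. By Definition~\ref{saddle-definition} together with Lemma~\ref{parity-lemma}, every critical point is either a point of valence $0$, a boundary point of valence $2$, or a point of valence $\ge 3$ (an interior critical point has even valence $\ne 2$, hence valence $0$ or $\ge 4$; a boundary critical point has valence $\ne 1$, hence valence $0$, $2$, or $\ge 3$).

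The first two parts are handled by a standard second-countability argument and do not need Lemma~\ref{n-ad-lemma}. Points of valence $0$ are exactly the local maxima and local minima of $F$, and each is a \emph{strict} local extremum; assigning to each strict local maximum $p$ a set from a fixed countable basis of $M$ on which $F<F(p)$ except at $p$ gives an injection, so there are only countably many, and likewise for local minima. A boundary point of valence $2$ is, by Lemma~\ref{parity-lemma}, a local maximum or minimum of $F|\partial M$, and it is strict because $(\partial M)\cap F^{-1}(F(p))$ is discrete (as noted after Definition~\ref{rado-definition}); the same injection argument applied to $F|\partial M$ on the second-countable $1$-manifold $\partial M$ shows there are only countably many.

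The main part is the set $P$ of critical points $p$ with $v(F,p)\ge 3$. For such a $p$, Definition~\ref{rado-definition} exhibits $v(F,p)\ge 3$ arcs in $\{F=F(p)\}$ emanating from $p$ and meeting only at $p$; truncating three of them produces a $3$-ad $K_p\subset\{F=F(p)\}$ centered at $p$. I would first show that for each fixed $t$ the set $P_t:=\{p\in P:F(p)=t\}$ is discrete: if $p\in P_t$ and $U_p$ is the neighborhood from Definition~\ref{rado-definition}, then any point of $U_p\cap\{F=t\}$ other than $p$ lies in the interior of one of the arcs and is an interior point of $M$ (using condition~(5) of the definition when $p\in\partial M$, and choosing $U_p$ inside the interior of $M$ otherwise), so it has valence $2$ and is not in $P$; hence $U_p\cap P_t=\{p\}$. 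A discrete subset of a second-countable space is countable, so each $P_t$ is countable. I would then show the set $T:=\{F(p):p\in P\}$ is countable: otherwise, picking one $p_t\in P$ with $F(p_t)=t$ for each $t$ in an uncountable subset of $T$ yields an uncountable family $\{K_{p_t}\}$ of subsets of $M$ that are pairwise disjoint, because they lie in distinct level sets of $F$, each containing a $3$-ad, contradicting Lemma~\ref{n-ad-lemma}. Hence $P=\bigcup_{t\in T}P_t$ is a countable union of countable sets, so it is countable; therefore $F$ has only countably many critical points, and the set of critical values, being their image under $F$, is countable as well.

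The one delicate point is arranging the disjointness hypothesis of Lemma~\ref{n-ad-lemma}: the $3$-ads $K_p$ with $p$ on a common level set of $F$ generally overlap, so the lemma cannot be applied to all of $P$ at once. The fix is to use the local structure in the definition of a Rad\'o function to obtain countability \emph{within} each level set directly (via discreteness of $P_t$), and to invoke Lemma~\ref{n-ad-lemma} only to control how many level sets carry such branch points at all.
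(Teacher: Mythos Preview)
Your proof is correct. The paper's own proof is much terser: it doubles $M$ to eliminate the boundary (so only valences $0$ and $\ge 4$ occur), handles valence~$0$ via the strict-extremum argument, and then simply asserts that Lemma~\ref{n-ad-lemma} bounds the points of each valence $v\ge 4$. The paper does not spell out how the disjointness hypothesis of that lemma is met; your two-step argument (first apply the lemma to $3$-ads lying in \emph{distinct} level sets to bound the set of critical levels, then use the local Rad\'o structure to see that critical points on a single level set are discrete and hence countable) makes this explicit and is a clean way to close that gap. Your direct treatment of the boundary cases (valence~$0$ and boundary valence~$2$) is also fine and avoids the doubling trick, at the cost of one extra case. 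Doubling buys brevity; your decomposition buys transparency about exactly where Lemma~\ref{n-ad-lemma} is invoked and why its disjointness hypothesis holds.
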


\begin{proof}
By doubling, it suffices to prove it for $M$ without boundary.  The points of valence $0$ are strict local maxima or minima,
and hence there are only countably many of them.  The other critical points are points of valence $\ge 4$.   
By Lemma~\ref{n-ad-lemma}, for each $v\ge 4$, there are only countably many  points of valence $v$.
\end{proof}

\begin{bibdiv}
\begin{biblist}

\bib{colding-minicozzi}{book}{
   author={Colding, Tobias Holck},
   author={Minicozzi, William P., II},
   title={A course in minimal surfaces},
   series={Graduate Studies in Mathematics},
   volume={121},
   publisher={American Mathematical Society, Providence, RI},
   date={2011},
   pages={xii+313},
   isbn={978-0-8218-5323-8},
   review={\MR{2780140}},
   doi={10.1090/gsm/121},
}

\bib{FO}{article}{
   author={Finn, R.},
   author={Osserman, R.},
   title={On the Gauss curvature of non-parametric minimal surfaces},
   journal={J. Analyse Math.},
   volume={12},
   date={1964},
   pages={351--364},
   issn={0021-7670},
   review={\MR{166694}},
   doi={10.1007/BF02807440},
}
		
\bib{?}{article}{
   author={Gama, E. S.},
      author={Mart\'{\i}n, F.},
   author={M\o{}ller, N. M.},
   title={Finite entropy translating solitons in slabs},
   journal={Preprint arXiv:2209.01640},
    date={2022},
}

\bib{graphs}{article}{
   author={Hoffman, D.},
   author={Ilmanen, T.},
   author={Mart\'{\i}n, F.},
   author={White, B.},
   title={Graphical translators for mean curvature flow},
   journal={Calc. Var. Partial Differential Equations},
   volume={58},
   date={2019},
   number={4},
   pages={Paper No. 117, 29},
   issn={0944-2669},
   review={\MR{3962912}},
   doi={10.1007/s00526-019-1560-x},
}

\bib{himw-correction}{article}{
   author={Hoffman, D.},
   author={Ilmanen, T.},
   author={Mart\'{\i}n, F.},
   author={White, B.},
   title={Correction to: Graphical translators for mean curvature flow},
   journal={Calc. Var. Partial Differential Equations},
   volume={58},
   date={2019},
   number={4},
   pages={Art. 158, 1},
   issn={0944-2669},
   review={\MR{4029723}},
   review={Zbl 07091751},
   doi={10.1007/s00526-019-1601-5},
}

\bib{HMW1}{article}{
author={Hoffman, D.},
   author={Mart\'{\i}n, F.},
   author={White, B.},
   title={Scherk-like translators for mean curvature flow},
   journal={J. Differential Geom.},
   volume={122},
   date={2022},
   number={3},
   pages={421--465},
   issn={0022-040X},
   review={\MR{4544559}},
   doi={10.4310/jdg/1675712995},
}
\bib{HMW2}{article}{
author={Hoffman, D.},
author={Martín, F.},
author={White, B.},
title={Nguyen's tridents and the classification of semigraphical translators for mean curvature flow},
date={2022},
journal={Journal für die reine und angewandte Mathematik (Crelles Journal)},
volume={2022},
number={786},
pages={79--105},
}

\bib{HMW3}{article}{
author={Hoffman, D.},
author={Martín, F.},
author={White, B.},
title={Translating annuli for Mean Curvature Flow.},
date={2022},
journal={In preparation},
}

\bib{hoffman-white}{article}{
   author={Hoffman, David},
   author={White, Brian},
   title={The geometry of genus-one helicoids},
   journal={Comment. Math. Helv.},
   volume={84},
   date={2009},
   number={3},
   pages={547--569},
   issn={0010-2571},
   review={\MR{2507253}},
   doi={10.4171/CMH/172},
}

\bib{maxwell}{article}{
author = {Maxwell, J. Clerk},
title = {On hills and dales},
journal = {The London, Edinburgh, and Dublin Philosophical Magazine and Journal of Science},
volume = {40},
number = {269},
pages = {421--427},
year  = {1870},
publisher = {Taylor & Francis},
doi = {10.1080/14786447008640422},
URL = { 
        https://doi.org/10.1080/14786447008640422   
},
eprint = {https://doi.org/10.1080/14786447008640422},
}

\bib{micallef-white}{article}{
   author={Micallef, Mario J.},
   author={White, Brian},
   title={The structure of branch points in minimal surfaces and in
   pseudoholomorphic curves},
   journal={Ann. of Math. (2)},
   volume={141},
   date={1995},
   number={1},
   pages={35--85},
   issn={0003-486X},
   review={\MR{1314031}},
   doi={10.2307/2118627},
}


\bib{osserman-book}{book}{
   author={Osserman, Robert},
   title={A survey of minimal surfaces},
   edition={2},
   publisher={Dover Publications, Inc., New York},
   date={1986},
   pages={vi+207},
   isbn={0-486-64998-9},
   review={\MR{852409}},
}

\bib{Rado1930}{article}{
   author={Rad\'{o}, Tibor},
   title={The problem of the least area and the problem of Plateau},
   journal={Math. Z.},
   volume={32},
   date={1930},
   number={1},
   pages={763--796},
   issn={0025-5874},
   review={\MR{1545197}},
   doi={10.1007/BF01194665},
}

\bib{rado}{book}{
   author={Rad\'{o}, Tibor},
   title={On the Problem of Plateau},
   publisher={Chelsea Publishing Co., New York, N. Y.},
   date={1951},
   pages={iv+109},
   review={\MR{0040601}},
}

\bib{schneider}{article}{
   author={Schneider, Rolf},
   title={A note on branch points of minimal surfaces},
   journal={Proc. Amer. Math. Soc.},
   volume={17},
   date={1966},
   pages={1254--1257},
   issn={0002-9939},
   review={\MR{202068}},
   doi={10.2307/2035720},
}

\bib{solomon}{article}{
   author={Solomon, Bruce},
   title={On foliations of ${\bf R}^{n+1}$ by minimal hypersurfaces},
   journal={Comment. Math. Helv.},
   volume={61},
   date={1986},
   number={1},
   pages={67--83},
   issn={0010-2571},
   review={\MR{847521}},
   doi={10.1007/BF02621903},
}

\end{biblist}
\end{bibdiv}

\end{document}